\definecolor{unbleu}{rgb}{0.03, 0.15, 0.4}
\definecolor{monvert}{rgb}{0.0,.5,0.0}
\definecolor{britishracinggreen}{rgb}{0.0, 0.26, 0.15}
\definecolor{monbleu}{rgb}{0,.2,.8}
\definecolor{monautrebleu}{rgb}{0,0.4,.75}
\definecolor{applegreen}{rgb}{0.55, 0.71, 0.0}
\definecolor{monrouge}{rgb}{0.8, 0.0, 0.0} %bostonuniversityred
\definecolor{cadmiumgreen}{rgb}{0.0, 0.42, 0.24}
\definecolor{royalblue(traditional)}{rgb}{0.0, 0.14, 0.4}
\definecolor{black}{rgb}{0.0, 0.0, 0.0}
\numberwithin{equation}{section}  
\newtheorem{theorem}{{\scshape {\bfseries Theorem}}}[section]
\newtheorem{lemma}[theorem]{{\scshape {\bfseries Lemma}}}
\newtheorem{proposition}[theorem]{{\scshape {\bfseries Proposition}}}
\newtheorem{corollary}[theorem]{{\scshape {\bfseries Corollary}}}
\newtheorem{remark}{{\scshape {\bfseries Remark}}}[section]
\renewenvironment{proof}[1]
{\noindent{{\bf{\small{P}{\scriptsize ROOF}}}.}\hspace{0.1cm} #1}
{$\;\blacksquare$\newline}
\newcommand{\integers}{\mathds{Z}_{{\scriptscriptstyle >0}}}
\newcommand{\entiers}{\mathds{Z}_{{\scriptscriptstyle \geq 0}}}
\newcommand{\real}{\mathds{R}}
\newcommand{\complex}{\mathds{C}}
\newcommand{\expectation}{{\mathds E}}
\newcommand{\Oun}{\mathcal{O}(1)} 
\newcommand{\proba}{{\mathbb P}}
\newcommand{\esperance}{{\mathds{E}}}
\newcommand{\ms}{m_{*}{\scriptstyle (K)}}
\newcommand{\dd}{\mathrm{d}}
\newcommand{\e}{\mathrm{e}}
\newcommand{\Dom}{\mathrm{Dom}}
\newcommand{\LKB}{L_{\sK}}
\newcommand{\LKl}{\mathcal{L}_{\sK}}
\newcommand{\LKlp}{\mathcal{L}_{\sK_p}}
\newcommand{\LKL}{\mathscr{L}_{\sK}}
\newcommand{\HO}{\EuScript{H}_{*}}
\newcommand{\PO}{\EuScript{M}_{0}}
\newcommand{\OU}{\EuScript{OU}_{*}}
\newcommand{\OUP}{\EuScript{Q}_{0}}
\newcommand{\un}{{\mathds{1}}}
\newcommand{\xf}{x_{*}}
\newcommand{\eK}[1]{e_{#1}^{{\scriptscriptstyle (K)}}}
\newcommand{\Ldeux}{L^{2}}
\newcommand{\ldeux}{\ell^{2}}
\newcommand{\TK}{T^{{\scriptscriptstyle (K)}}_0} 
\newcommand{\HK}{\mathscr{H}}
\newcommand{\nuK}{\nu^{{\scriptscriptstyle (K)}}}
\newcommand{\piK}{\pi^{{\scriptscriptstyle (K)}}}
\newcommand{\rhoK}[1]{\rho_{#1}^{{\scriptscriptstyle (K)}}}
\newcommand{\rhoKp}[2]{\rho_{#1}^{{\scriptscriptstyle(K_{#2})}}}
\newcommand{\phiKp}[2]{\phi_{#1}^{(K_{#2})}}
\newcommand{\phiK}[1]{{\phi^{{\scriptscriptstyle (K)}}#1}}
\newcommand{\spectre}{\mathrm{Sp}}
\newcommand{\ic}{\mathrm{i}}
\newcommand{\tlamz}{b'(0)}
\newcommand{\tmuz}{d'(0)}
\newcommand{\nge}{n_{\mathrm{g}}({\scriptstyle K},\eta)}
\newcommand{\tnge}{\tilde n_{\mathrm{g}}({\scriptstyle K},\eta)}
\newcommand{\nde}{n_{\mathrm{d}}({\scriptstyle K},\,\eta)}
\newcommand{\nl}{n_{\ell}{\scriptstyle (K)}}
\newcommand{\nr}{n_{r}{\scriptstyle (K)}}
\newcommand{\nmin}{n_{\mathrm{min}}{\scriptstyle (K)}}
\newcommand{\sK}{{\scriptscriptstyle K}}
\newcommand{\psK}{{\scriptscriptstyle (K)}}
\newcommand{\psKp}{{\scriptscriptstyle (K_p)}}
\newcommand{\Vn}{V_{n}(K)}
\newcommand{\lambdaK}{\lambda^{{\scriptscriptstyle (K)}}}
\newcommand{\muK}{\mu^{{\scriptscriptstyle (K)}}}
\newcommand{\Kxf}{K\xf}
\title{Large population limit of the spectrum of killed birth-and-death processes}
\author{J.-R. Chazottes$^{\textup{(a)}}$, P. Collet$^{\textup{(a)}}$, S. M\'el\'eard$^{\textup{(b)}}$\\
{\small $^{\textup{(a)}}$ CPHT, CNRS, Ecole polytechnique, IP Paris, Palaiseau, France}\\
{\small $^{\textup{(b)}}$ CMAP, CNRS, Ecole polytechnique, IP Paris, Palaiseau, France}
}
\begin{document} 

\maketitle

\begin{abstract}
We consider a general class of birth-and-death processes with state space $\{0,1,2,3,\ldots\}$ which describes the size of a population going eventually to extinction with probability one.
We obtain the complete spectrum of the generator of the process killed at $0$ in the large population limit, that is, we scale the process by a parameter $K$, and take the limit $K\to+\infty$.
We assume that the differential equation $\dd x/\dd t=b(x)-d(x)$  describing the infinite population limit  (in any finite-time interval) has a repulsive fixed point at $0$, and an attractive fixed point $\xf>0$.
We prove that, asymptotically, the spectrum is the superposition of two spectra. One is the spectrum of the generator of an Ornstein-Uhlenbeck process, which is
$n(b'(\xf)-d'(\xf))$, $n\ge 0$. The other one is the spectrum of a continuous-time binary branching process conditioned on non-extinction, and is given by $n(d'(0)-b'(0))$, $n\ge 1$.
A major difficulty is that different scales and function spaces are involved. We work at the level of the eigenfunctions that we split
over different regions, and study their asymptotic dependence on $K$ in each region. In particular, we prove that the spectral gap goes to
$\min\big\{b'(0)-d'(0),\,d'(\xf)-b'(\xf)\big\}$.
This work complements a previous work of ours in which we studied the approximation of the quasi-stationary distribution and of the mean time to extinction.

\smallskip

\noindent\textbf{Key-words:} Ornstein-Uhlenbeck process, quantum harmonic oscillator, binary branching process,  Jacobi operators,
Dirichlet form, spectral gap, Fr\'echet-Kolmogorov-Riesz compactness criterion, discrete orthogonal\newline polynomials, quasi-eigenvectors, quasi-stationary 
distribution.
\end{abstract}

\newpage

\tableofcontents

\newpage

%%%%% SECTION
\section{Introduction, heuristics, and main result}

\subsection{The context}

We consider a class of birth-and-death processes $(X^{\sK}_t)_{t\geq 0}$ with state space $\entiers$\footnote{We denote by $\entiers$ the set of non-negative integers, and by $\integers$ the set of 
positive integers.} which describes how the size of a single population evolves according to birth and death rates of the form
\begin{equation}\label{def-lambdaKnmuKn}
\lambdaK_{n}=K\, b\left(\frac{n}{K}\right)\quad\text{and}\quad \muK_{n}=K\,d\left(\frac{n}{K}\right)
\end{equation}
where $n\geq 1$, and $K\in \integers$ is a scaling parameter, often called `carrying capacity'.  We suppose that $b(0)=d(0)=0$, implying that $0$ is an absorbing
state for the process, modelling extinction, and our assumptions are such that the probability to reach this state is equal to one.
The unique stationary distribution is the Dirac measure at $0$, so a relevant distribution to look for is a quasi-stationary distribution.
A probability measure $\nuK$ on the positive integers is a quasi-stationary distribution if, for all $t>0$ and for all subsets $A\subset \integers$, one has
$\proba_{\nuK}(X^{\sK}_t\in A \,|\, T^{{\scriptscriptstyle (K)}}_0>t)=\nuK(A)$, where $\TK$ is the extinction time, that is, the smallest $t>0$ such that
$X^{\psK}_t=0$.
In other words, a quasi-stationary distribution plays the role of a stationary distribution when conditioning upon non-extinction.
We refer to \cite{CMSM,MV} for more informations about quasi-stationary distributions.

When $K\to\infty$, the trajectories of the rescaled process $(K^{-1}X^{\psK}_t)_{t\geq 0}$ converge in probability, in any fixed time-window, to the solutions of the differential equation
\begin{equation}\label{the-edo}
\frac{\dd x}{\dd t} =b(x)-d(x)
\end{equation}
if the initial condition state is for instance of the form $\lfloor Kx_0\rfloor$ for a given $x_0>0$. 
We assume that  the functions $b$ and $d$ only vanish at $0$, and that
\[
d'(0) - b'(0)<0
\]
meaning that the fixed point $0$ is repulsive. We also assume that there is a unique attractive fixed point $\xf>0$, that is 
\[
b(\xf)=d(\xf)\quad\text{and}\quad b'(\xf)-d'(\xf)<0.
\]
(We will give the complete set of assumptions on the functions $b$ and $d$ later on.) 

 A famous example is the so-called  logistic process for which $b(x)=\lambda x$, $d(x)=x(\mu +x)$,  where $\lambda$ and $\mu$ are positive real numbers.
We assume that  $\lambda>\mu$ and we have $\xf=\lambda-\mu$.

In \cite{CCM1} we obtained the precise asymptotic behaviour of the first eigenvalue of the generator $\LKB$ of the process killed  at $0$, and also of the law of the 
extinction time starting from the quasi-stationary distribution (among other results).  
Here we go further and obtain the complete spectrum of the generator of the killed process, in the limit $K\to\infty$. 
In  particular, the knowledge of the spectral gap allows us to obtain the time of relaxation for the process conditioned on non-extinction to 
obey the quasi-stationary distribution.  

\subsection{Notations for basic function spaces}

We denote by
\begin{itemize}
\item
$\mathscr{D}$ the space of $C^\infty$ $\complex$-valued functions with compact support on $\real$;
\item
$c_{00}$ the space of $\complex$-valued sequences with finitely many nonzero values;
\item
$\ldeux$ the space of square-summable $\complex$-valued sequences equipped with the standard scalar product.
\item
$\Ldeux$ the space of square-integrable $\complex$-valued functions with respect to Lebesgue measure on $\real$.
\end{itemize}
We will define several operators on $c_{00}$ and will consider their closure on $\ldeux$. 
For simplicity, we will use the same notation for an operator and its closure. As we will see later, there is no ambiguity on the extensions.

\subsection{Heuristics}

The fundamental object in this paper is the spectrum of the following operator that we momentarily define on $c_{00}$:
\begin{equation}\label{pre-LK}
\big(\LKB v\big)(n)=\lambdaK_n\big(v(n+1)-v(n)\big)+\muK_n \big(v(n-1)\un_{\{n>1\}}-v(n)\big)
\end{equation}
for $n\in\integers$.
The idea is to `localize' this operator either around $n=\lfloor K\xf\rfloor$ or $n=1$, which corresponds in the dynamical system to the fixed point $\xf$
or the fixed point $0$. 
A natural idea would be to `cut' the operators in order to differentiate these two dynamics. 
However the main difficulty is that the two different pieces involve different scales and different function spaces.
Since we don't know how to cope with this problem at the level of operators, we work at the level of the eigenfunctions
that we will split  on different regions, and study their asymptotic dependence on $K$ in each region. 

To have an idea of the different scales involved in the problem, let us first study the asymptotic behaviour of the birth and death rates. 
By Taylor expansion around $K\xf$ we have
\[
\lambdaK_{n}=K\,b\big(\xf\big)+(n-\Kxf)\,b'\big(\xf\big)+\mathcal{O}\left(\frac{(n-\Kxf)^{2}}{K}\right)
\]
and 
\[
\muK_{n}=K\,d\big(\xf\big)+(n-\Kxf)\,d'\big(\xf\big)+\mathcal{O}\left(\frac{(n-\Kxf)^{2}}{K}\right).
\]
Taking $v^{\psK}(n)=u\big((n-\Kxf)/\sqrt{K}\big)$ with $u\in\mathscr{D}$, we get
\[
\big(\LKB v^{\psK}\big)(n)=\OU u\left(\frac{n-\Kxf}{\sqrt{K}}\right)+\mathcal{O}\left(\frac{1}{\sqrt{K}}\right)
\]
where
\begin{equation}\label{def-OU-generator}
\OU f(x)= b(\xf) f''(x)+ \,\big(b'(\xf)-d'(\xf)\big)\,x f'(x)
\end{equation}
is the generator of an Ornstein-Uhlenbeck process on $\real$ which satisfies the stochastic differential equation
$\dd X_t=\big(b'(\xf)-d'(\xf)\big)X_t \dd t + \sqrt{2b(\xf)}\, \dd B_t$, where $(B_t)_{t\geq 0}$ is a one-dimensional Brownian motion.
It is well known (see Remark \ref{rem:OUHO}) that the spectrum of $\OU$ is $-S_1$ where
\begin{equation}\label{def-S1} 
S_1=\big(d'(\xf)-b'(\xf)\big)\entiers
\end{equation}
in the space $L^2\Big(\sqrt{\frac{d'(\xf)-b'(\xf)}{2\pi b(\xf)}}\,\e^{-\frac{(d'(\xf)-b'(\xf)}{2b(\xf)}x^2}\dd x\Big)$.
Now we look at $n$ near $1$. By Taylor expansion, we have
\[
\lambdaK_{n}=n\left(b'(0)+\mathcal{O}\left(\frac{n}{K}\right)\right)\quad\text{and}\quad \muK_{n}=n\left(b'(0)+\mathcal{O}\left(\frac{n}{K}\right)\right).
\]
If $v\in c_{00}$, then we get
\[
\left\| \LKB v -\OUP v\right\|_{\ldeux}\leq \frac{\Oun}{K}
\] 
where
\[
\big(\OUP v\big)(n)=\tlamz\,n\,\big(v(n+1)-v(n)\big)+\tmuz\, n\,\big(v(n-1)\,\un_{\{n>1\}}-v(n)\big)
\]
which is the generator of a (continuous-time) binary branching process  killed at $0$.
We shall prove later on that, in a weighted $\ell^2$ space defined below, the spectrum of $\OUP$ is $-S_2$ where
\begin{equation}\label{def-S2}
S_2=\big(b'(0)-d'(0)\big)\integers.
\end{equation}

The previous observations suggest that the limit of the spectrum of the generator of the birth-and-death process $(X^{\psK}_t)_{t\geq 0}$, in an appropriate space, is
\[
\big(d'(0)-b'(0)\big)\integers \bigcup \big(b'(\xf)-d'(\xf)\big)\entiers.
\]
Notice that all the elements of this set are negative and this is not a disjoint union in general. The logistic model is an example illustrating this since
$d'(0)-b'(0)=b'(\xf)-d'(\xf)=\mu-\lambda$, so we will have asymptotic double eigenvalues in this case. 

We will prove that the limit of the spectrum of $\LKB$ is obtained from the explicit spectra of the above two operators.
Notice that one is differential operator and the other one is a finite-difference operator.
This is a reverse situation with respect to numerical analysis where the spectrum of limiting differential operators are obtained
from the knowledge of the spectrum of finite-difference operators. See for instance \cite{Chatelin}.

\subsection{Main result}

For each $K\in \integers$, the sequence of numbers
\[
\piK_n:=\frac{\lambdaK_1\cdots \lambdaK_{n-1}}{\muK_1\cdots \muK_n}, \, n\geq 2\quad\text{and}\quad \piK_1:=\frac{1}{\muK_1}
\]
naturally shows up in the study of birth-and-death processes.
We will give below a set of assumptions on the functions $b$ and $d$, defining the differential equation \eqref{the-edo}, ensuring that the process
reaches $0$ in finite time with probability one, that the mean-time to extinction is finite, and that the quasi-stationary distribution exists and is unique.

Let $\ell^2(\piK)$ be the space of $\complex$-valued sequences $(v_n)_{n\geq 1}$ such that 
\[
\sum_{n\geq 1} |v_n|^2\piK_n<\infty\,.
\]
This is a Hilbert space when endowed with the scalar product $\langle v,w\rangle_{\piK}:=\sum_{n\geq 1} \bar{v}_nw_n \piK_n$.

We know from \cite{CCM1} that the operator $\LKB$ is closable in $\ell^2(\piK)$. 
The closure (which we denote by the same symbol) is self-adjoint and has a compact resolvent, hence its spectrum is discrete, composed of simple eigenvalues which are 
negative real numbers, and the corresponding eigenvectors are orthogonal. We normalize these eigenvectors and we can assume that they are real (since they are
defined by a second-order real recurrence relation whose solution is determined by choosing the first element).
We write
\begin{equation}\label{def-eigen-LK}
L_{\sK}\psi^{\psK}_j=-\rho^{\psK}_j \psi^{\psK}_j
\end{equation}
where we order the eigenvalues $-\rho^{\psK}_j$ in decreasing order as $j$ increases. 
To emphasize that all operators considered in this paper are negative, we have decided to write their eigenvalues under the form $-\rho$, with $\rho>0$.

As shown in \cite{CCM1}, the quasi-stationary distribution exists, is unique, and given by
\begin{equation}
\label{QSD}
\nuK(\{n\})=\frac{\piK_n\psi^{\psK}_0(n)}{\langle \psi^{\psK}_0,\mathbf{1}\rangle_{\piK}},\;n\in\integers
\end{equation}
where $\mathbf{1}=(1,1,\ldots)$.
Note that it also follows from Theorem 3.2 and Lemma 9.3 in \cite{CCM1} that there exists $D>1$ such that for all $K\in\integers$, we have
$D^{-1}\leq \psi_0^{\psK}\leq D$. Therefore, the Hilbert spaces $\ldeux(\nuK)$ and $\ldeux(\piK)$ are isomorphic.

Let  $S^{\psK}_t f(n):=\esperance_n\Big( f(X^{\psK}_t) \un_{\{\TK>t\}}\Big)$ ($t\geq 0$) be the semigroup of the killed process, where $n\in\integers$ and $f\in \ell^\infty$. The following result justifies that we look for the spectrum of $L_{\sK}$ in  $\ldeux(\piK)$ . 

\begin{proposition}
The semigroup $(S^{\psK}_t)_{t\geq 0}$, defined on $\ell^\infty$, extends to a $C_0$-contraction semigroup on $\ldeux(\nuK)$.
\end{proposition}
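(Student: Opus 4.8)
The plan is to obtain the three defining properties --- contraction, semigroup law, strong continuity --- by first working on the dense subspace $\ell^\infty$ (indeed $c_{00}$), on which $(S^{\psK}_t)_{t\geq0}$ is the usual sub-Markov semigroup of the killed chain (positivity preserving, $S^{\psK}_t\un\leq\un$), and then extending to $\ldeux(\nuK)$ by density. The first thing I would isolate is an $\ell^1(\nuK)$-sub-invariance, i.e.\ the fact that $\nuK$ is excessive for the killed semigroup: for $g\geq0$, since $\nuK$ is quasi-stationary the conditional law of $X^{\psK}_t$ given $\{\TK>t\}$ under $\proba_{\nuK}$ is again $\nuK$, so that
\[
\sum_{n\geq1}\nuK(\{n\})\,\big(S^{\psK}_tg\big)(n)=\esperance_{\nuK}\!\big[g(X^{\psK}_t)\,\un_{\{\TK>t\}}\big]=\proba_{\nuK}(\TK>t)\sum_{n\geq1}\nuK(\{n\})\,g(n)\ \leq\ \sum_{n\geq1}\nuK(\{n\})\,g(n),
\]
using $\proba_{\nuK}(\TK>t)\leq1$ (in fact this probability equals $\e^{-\rhoK{0}t}$, but only the bound by $1$ is needed here).

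From this, the $\ldeux(\nuK)$-contraction follows by a one-line Cauchy--Schwarz estimate (``interpolation by hand''): for $f\in\ell^\infty$, positivity of $S^{\psK}_t$ gives $\big|(S^{\psK}_tf)(n)\big|\leq(S^{\psK}_t|f|)(n)$, and Cauchy--Schwarz applied to $\esperance_n[\,\cdot\,]$ with the splitting $\un_{\{\TK>t\}}=\un_{\{\TK>t\}}^2$ yields $\big((S^{\psK}_t|f|)(n)\big)^2\leq\proba_n(\TK>t)\,(S^{\psK}_t|f|^2)(n)\leq(S^{\psK}_t|f|^2)(n)$; summing against $\nuK$ and applying the sub-invariance with $g=|f|^2$ gives $\|S^{\psK}_tf\|_{\ldeux(\nuK)}\leq\|f\|_{\ldeux(\nuK)}$. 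Since $c_{00}$ is dense in $\ldeux(\nuK)$ (truncate first the values and then the support, using $\nuK(\integers)=1$ and dominated convergence), $S^{\psK}_t$ extends uniquely to a contraction of $\ldeux(\nuK)$, which I still denote $S^{\psK}_t$. The identities $S^{\psK}_0=\mathrm{Id}$ and $S^{\psK}_{t+s}=S^{\psK}_tS^{\psK}_s$ hold on $\ell^\infty$ by the Markov property, hence on $\ldeux(\nuK)$ by density and this uniform contraction bound. For strong continuity I would again use density together with the uniform bound to reduce to $f\in c_{00}$; there, right-continuity at $t=0$ of the trajectories of the (non-explosive) minimal chain gives $\proba_n(X^{\psK}_t=n,\TK>t)\to1$, hence $(S^{\psK}_tf)(n)\to f(n)$ for every $n$, while $\big|(S^{\psK}_tf)(n)-f(n)\big|\leq2\|f\|_\infty$ and $\nuK(\integers)=1$ let dominated convergence conclude that $\|S^{\psK}_tf-f\|_{\ldeux(\nuK)}\to0$.

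The step I expect to demand the most care is the contraction, for a conceptual rather than computational reason: $\nuK$ is the quasi-stationary distribution, not the reversibility measure $\piK$ for which $\LKB$ is self-adjoint, so one cannot simply quote self-adjointness on $\ldeux(\piK)$ --- transporting the $\ldeux(\piK)$-contraction through the isomorphism $\ldeux(\piK)\cong\ldeux(\nuK)$ coming from \eqref{QSD} is not isometric and would only produce a bound by the constant $D^2$. It is exactly the quasi-stationarity of $\nuK$, through the sub-invariance identity above, that makes the Cauchy--Schwarz estimate give a genuine contraction; a secondary point to be careful about is that the strong-continuity argument relies only on right-continuity at $t=0$ of the minimal birth-and-death chain, which is guaranteed by the standing assumptions (no explosion, extinction in finite time).
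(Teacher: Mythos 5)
Your proof is correct and follows essentially the same route as the paper: the contraction on $c_{00}$ via Cauchy--Schwarz combined with the quasi-stationarity of $\nuK$ (the paper keeps the factor $\proba_{\nuK}(\TK>t)=\e^{-\rhoK{0}t}$ rather than bounding it by $1$, yielding the slightly sharper bound $\e^{-\rhoK{0}t/2}$), then extension by density of $c_{00}$ in $\ldeux(\nuK)$, and strong continuity from pointwise convergence plus dominated convergence. The extra details you supply (the explicit sub-invariance statement, the semigroup law, and the remark on why one cannot just transport the $\ldeux(\piK)$ self-adjointness) are consistent with, and implicit in, the paper's argument.
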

We refer to \cite{yosida} for definitions and properties of $C_0$-contraction semigroups.

\begin{proof}
We follow the argument of Proposition 8.1.8 p. 162 in \cite{BL}.  Since $\nuK$ is a quasi-stationary distribution, for $f\in c_{00}$ and $t\geq 0$, we have
\begin{align*}
\int |S^{\psK}_t f|^2 \dd\nuK 
& \leq \int \dd\nuK(n)\, \esperance_n \Big( \big|f(X^{\psK}_t)\big|^2 \un_{\big\{\TK>t\big\}}\Big)\\
& = \e^{-\rho^{\psK}_0 t} \int |f|^2 \dd\nuK.
\end{align*}
Therefore
\[
\| S^{\psK}_t f\|_{\ldeux(\nuK)}\leq \e^{-\frac{\rho^{\psK}_0 \mathlarger{t}}{2}} \|f\|_{\ldeux(\nuK)},\;t\geq 0.
\]
Since $c_{00}$ is dense in $\ldeux(\nuK)$, we get
\[
\| S^{\psK}_t \|_{\ldeux(\nuK)}\leq \e^{-\frac{\rho^{\psK}_0 \mathlarger{t}}{2}}, \; t\geq 0.
\]
This implies that $(S^{\psK}_t)_{t\geq 0}$ extends to a contraction semigroup in $\ldeux(\nuK)$. 
Now, since $\proba_n(X^{\psK}_t=m,\TK>t)\to \delta_{n,m}$, as $t\to 0$, then for any $f\in c_{00}$, $S^{\psK}_t f\to f$ pointwise,
hence by dominated convergence we obtain $S^{\psK}_t f \to f$ in $\ldeux(\nuK)$ as $t\to 0$. 
The proposition follows from the contraction property obtained above and the fact that $c_{00}$ is dense in $\ldeux(\nuK)$.
\end{proof}
Observe that the same result holds in fact for any $1\leq p <\infty$ (with a similar proof).

Recall from  \cite{CCM1} that  we also have
\[
\proba_{\nuK}(\TK>t)=\e^{-\rho^{\psK}_0 t},\;t>0
\]
and the mean-time to extinction starting from $\nuK$ is
\begin{align*}
\MoveEqLeft \expectation_{\nuK}\big[\TK\big]\\
&=\frac{1}{\rho^{\psK}_0}= \frac{\sqrt{2\pi}\,\exp\bigg(K\mathlarger{\int}_{0}^{\xf} \log\frac{b(x)}{d(x)}\,\dd x\bigg)}{b(\xf)\left(\sqrt{\frac{b(1/\sK)}{d(1/\sK)}}-\sqrt{\frac{d(1/\sK)}{b(1/\sK)}}\right)\! 
\sqrt{K H''(\xf)}}
\!\left(\!1\!+\!\mathcal{O}\!\left(\frac{(\log K)^3}{\sqrt{K}}\right)\!\!\right).
\end{align*}
(See the next section for the definition of $H$.)
In the logistic model this gives (recall that $x_*=\lambda-\mu$)
\[
\expectation_{\nuK}\big[\TK\big]=
\frac{\sqrt{2\pi}\,\mu\,\exp\Big(K\big(\lambda-\mu+\mu\log\frac{\mu}{\lambda}\big)\Big)}{(\lambda-\mu)^2\sqrt{K}}
\!\left(\!1\!+\!\mathcal{O}\!\left(\frac{(\log K)^3}{\sqrt{K}}\right)\!\!\right).
\]
Thus $\rho^{\psK}_0$ is exponentially small in $K$, and we also proved in \cite{CCM1} that the `spectral gap' satisfies
(see Theorem 3.3 in \cite{CCM1})
\begin{equation}
\label{lower-bound-spectral-gap}
\rho^{\psK}_1-\rho^{\psK}_0 \geq \frac{\Oun}{\log K}, \; K\in\mathds{Z}_{{\scriptscriptstyle >1}}.
\end{equation}
This lower bound goes to $0$ as $K\to+\infty$. A noteworthy consequence of the main results of this paper (see Corollary \ref{trou}) is 
that the spectral gap does not close when $K$ tends to infinity, contrary to what could have been  suspected from the lower bound in \eqref{lower-bound-spectral-gap}.

In fact we will fully describe the asymptotics of all eigenvalues, which is the content of our main theorem. To state it, 
we need to order $S_1\cup S_2$ to take care of possible multiplicities. Recall that $S_{1}$ and $S_{2}$ have been defined in \eqref{def-S1} and \eqref{def-S2} 
and that they are positive sequences. 
This is done through the definition of a non-decreasing infinite (positive) sequence $(\eta_n)_{n\geq 0}$. 
Let $\eta_0=0$. We construct this sequence recursively as follows.
\begin{itemize}
\item[$\diamond$]
If $\eta_n\in S_1\Delta S_2$, then $\eta_{n+1}=\min\{\eta: \eta \in S_1\cup
S_2: \eta>\eta_n\}$. 
\item[$\diamond$]
If $\eta_n\in S_1\cap S_2$, then
\begin{itemize}
\item[$\bullet$]
If $\eta_{n-1}=\eta_n$, then $\eta_{n+1}=\min\{\eta: \eta \in S_1\cup S_2: \eta>\eta_n\}$.
\item[$\bullet$]
If $\eta_{n-1}<\eta_n$, then $\eta_{n+1}=\eta_n$.
\end{itemize}
\end{itemize}

The main result of this paper, whose assumptions will be stated in Section 2, is the following. 
\begin{theorem}[Convergence of the spectrum]\label{main-theorem}
\leavevmode\\
The spectrum of $L_{\sK}$ in $\ell^2(\piK)$ converges pointwise to $(-\eta_n)_{n\geq 0}$ when $K$ tends to infinity. In other words
\[
\lim_{K\to+\infty}\rho^{\psK}_j = \eta_j, \;\forall j\in\entiers.
\]
\end{theorem}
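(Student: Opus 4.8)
The plan is to establish the convergence $\rho^{\psK}_j \to \eta_j$ by a two-sided argument, combining an upper bound (building approximate eigenfunctions, i.e. quasi-modes, from the limiting operators $\OU$ and $\OUP$) with a matching lower bound (showing no spurious eigenvalues appear and no eigenvalue can drift below the predicted limit). First I would set up the geometric decomposition: choose a cutoff scale, say an intermediate region $n \asymp K^\alpha$ with $0<\alpha<1$, separating the ``bulk'' region around $\lfloor K\xf\rfloor$ (where the rescaled variable $(n-K\xf)/\sqrt{K}$ lives and $\LKB$ looks like $\OU$) from the ``boundary'' region near $n=1$ (where $\LKB$ looks like $\OUP$). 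The heuristic computations already in the excerpt show that plugging $v^{\psK}(n)=u((n-K\xf)/\sqrt K)$ for $u\in\mathscr D$ an eigenfunction of $\OU$, or $v\in c_{00}$ an eigenfunction of $\OUP$, into $\LKB$ produces the corresponding eigenvalue up to $\mathcal O(K^{-1/2})$ (resp. $\mathcal O(K^{-1})$) errors in the relevant norm. The first real task is to promote these into genuine quasi-modes for $\LKB$ in $\ell^2(\piK)$: one must glue a bulk quasi-mode and a boundary quasi-mode across the intermediate region using a smooth partition of unity, control the commutator terms created by the cutoff (these are supported in the transition zone and must be shown to be $o(1)$ in $\ell^2(\piK)$-norm, which is where sharp estimates on $\piK_n$ from \cite{CCM1} and on the decay of the OU/branching eigenfunctions enter), and check that the glued functions are asymptotically orthonormal. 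Given $m$ such quasi-modes with quasi-eigenvalues converging to $-\eta_0,\dots,-\eta_{m-1}$, the standard min–max / spectral-projection argument yields $\limsup_K \rho^{\psK}_j \le \eta_j$ for each $j$.

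For the lower bound $\liminf_K \rho^{\psK}_j \ge \eta_j$, I would argue by contradiction and compactness: suppose along a subsequence $\rho^{\psK}_j \to \ell < \eta_j$ with normalized eigenfunctions $\psi^{\psK}_j$. One localizes $\psi^{\psK}_j$ into its bulk part and boundary part and extracts limits in the two natural function spaces. The crucial input is a tightness/no-escape-of-mass statement: the eigenfunction cannot concentrate in the intermediate region nor leak to infinity. This is exactly where the Fréchet–Kolmogorov–Riesz compactness criterion advertised in the abstract is used — after rescaling, the bulk part of $\psi^{\psK}_j$ is shown to be precompact in $\Ldeux$ of the OU Gaussian weight, its limit is an eigenfunction (or zero) of $\OU$ with eigenvalue $-\ell$; similarly the boundary part converges to an eigenfunction (or zero) of $\OUP$ with eigenvalue $-\ell$. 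If $-\ell \notin -S_1\cup -S_2$ both limits vanish, contradicting normalization once one shows the intermediate region carries negligible mass; if $-\ell\in -S_1\cup-S_2$ but $\ell<\eta_j$, one counts dimensions — the eigenfunctions $\psi^{\psK}_0,\dots,\psi^{\psK}_j$ are orthonormal, their limits must be linearly independent in the direct sum of eigenspaces of $\OU$ and $\OUP$, and the multiplicity bookkeeping encoded in the sequence $(\eta_n)$ forces a contradiction. Establishing that the transition region $n\asymp K^\alpha$ carries asymptotically no $\ell^2(\piK)$-mass of a unit eigenvector with bounded eigenvalue is a quantitative Agmon-type estimate: one uses the Dirichlet form associated with $\LKB$ together with the fact that in this region the drift $b(n/K)-d(n/K)$ has a definite sign pushing mass toward $\xf$, giving exponential-in-$K^{\beta}$ decay of the eigenfunction across the barrier.

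The main obstacle, I expect, is precisely the matching of scales across the intermediate region — the bulk lives on scale $\sqrt K$ in a Gaussian-weighted $L^2$, the boundary lives on an $O(1)$ lattice in a weighted $\ell^2(\piK)$ with $\piK_n$ growing roughly geometrically then turning around, and there is no single operator-level object interpolating them. Making the partition-of-unity gluing quantitative requires delicate two-sided control of $\piK_n$ throughout $1\ll n\ll K\xf$ (the exponential factor $\exp(K\int_0^{x}\log(b/d))$ and the Gaussian correction near $\xf$), and requires showing that an eigenfunction normalized in the global $\ell^2(\piK)$ norm has its mass split between the two regions in a way compatible with both limiting normalizations — i.e. that neither region is ``invisible'' to the other. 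A secondary technical point is handling the non-disjointness of $S_1$ and $S_2$ (as in the logistic case): when $\eta_n\in S_1\cap S_2$ one must produce \emph{two} asymptotically orthogonal quasi-modes at the same quasi-eigenvalue, one of bulk type and one of boundary type, and verify their near-orthogonality in $\ell^2(\piK)$ — which is automatic since they are supported in essentially disjoint regions, but must be stated carefully. Once the transition-region Agmon estimate and the two-sided $\piK_n$ bounds are in hand, the min–max upper bound and the compactness lower bound close the argument.
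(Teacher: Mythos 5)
Your two-sided strategy is sound and, at the level of architecture, coincides with the paper's: quasi-modes built from the two limiting operators give one inclusion, and localization of true eigenvectors plus compactness and a multiplicity count give the other, with the double points of $S_1\cap S_2$ handled by producing two asymptotically orthogonal objects with essentially disjoint supports. The implementation, however, differs in three ways worth comparing. First, the paper never works in $\ell^2(\piK)$ with the OU generator: it conjugates by $(\Pi^{\psK})^{1/2}$ to the symmetric operator $\LKl$ on flat $\ldeux$ and embeds into $\Ldeux$ via $Q_{\sK}$, so the bulk limit is the harmonic-oscillator operator $\HO$ and the ``delicate two-sided control of $\piK_n$'' that you single out as the main obstacle simply disappears; moreover no partition-of-unity gluing is needed, because the eigenvectors of $\PO$ and the discretized Hermite functions decay exponentially and are already global quasi-modes (Propositions \ref{prop-quasi-S1} and \ref{prop-quasi-S2}). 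Second, the paper does not use min--max/Rayleigh--Ritz at all for the counting: it only shows via the resolvent bound (Proposition \ref{yaspectre}) that every point of $S_1\cup S_2$ is an accumulation point of the spectrum (Theorem \ref{liminf}), and the correct pointwise convergence with multiplicities is then extracted by a recursion on $j$ combined with the rigidity statements of Theorem \ref{prsquelafin} (simple limits in $S_1\Delta S_2$, exactly double limits in $S_1\cap S_2$), the latter proved by the ``three orthonormal limit vectors in an at most two-dimensional subspace of $\ldeux\oplus\Ldeux$'' argument that plays the role of your dimension count. Your min--max route gives $\limsup_K\rho_j^{\psK}\le\eta_j$ more directly, at the price of verifying asymptotic orthonormality of the whole quasi-mode family in the weighted norm. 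Third, the barrier estimate in the intermediate region is obtained in the paper not by an Agmon/Dirichlet-form argument but by a discrete maximum principle applied to the eigenvector recursion (Proposition \ref{strucvp}); the Dirichlet form is used instead to feed the Fr\'echet--Kolmogorov--Riesz criterion (tightness and equicontinuity of the bulk piece). Your Agmon-type variant is a plausible substitute, but note that what makes the region $\nl\le n\le\nr$ a barrier is the positivity of the potential $\Vn-\rho$ there, not merely the sign of the drift $b-d$, so that is the quantity your estimate would have to exploit.
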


\begin{corollary}\label{trou}
The spectral gap $\rho^{\psK}_1-\rho^{\psK}_0$ converges to
\[
\min\big\{b'(0)-d'(0),\,d'(\xf)-b'(\xf)\big\}\;.
\]
\end{corollary}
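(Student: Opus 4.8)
The plan is to read this off directly from Theorem~\ref{main-theorem}. By that theorem, $\rho^{\psK}_0\to\eta_0$ and $\rho^{\psK}_1\to\eta_1$ as $K\to+\infty$, so it suffices to evaluate $\eta_0$ and $\eta_1$ from their recursive definition and to verify that $\eta_1-\eta_0$ equals the announced minimum.

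First, $\eta_0=0$ by construction. Next I would note that $0\in S_1$ (take $n=0$ in \eqref{def-S1}) while $0\notin S_2$ (the index set in \eqref{def-S2} is $\integers$, the positive integers), so $\eta_0\in S_1\Delta S_2$. Hence the first bullet of the construction of $(\eta_n)_{n\geq 0}$ applies and $\eta_1=\min\{\eta\in S_1\cup S_2:\eta>0\}$. The smallest positive element of $S_1$ is $d'(\xf)-b'(\xf)$, which is positive by the assumption $b'(\xf)-d'(\xf)<0$, and the smallest element of $S_2$ is $b'(0)-d'(0)$, positive by the assumption $d'(0)-b'(0)<0$. Therefore $\eta_1=\min\{d'(\xf)-b'(\xf),\,b'(0)-d'(0)\}$, and combining with Theorem~\ref{main-theorem} gives
\[
\lim_{K\to+\infty}\big(\rho^{\psK}_1-\rho^{\psK}_0\big)=\eta_1-\eta_0=\min\big\{b'(0)-d'(0),\,d'(\xf)-b'(\xf)\big\},
\]
which is the claim.

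There is no real obstacle at this stage: all the difficulty is already absorbed into the proof of Theorem~\ref{main-theorem} (splitting and controlling the eigenfunctions over the different regions and scales); the corollary is merely the $j=0$ and $j=1$ instances of that convergence together with the elementary identification of $\eta_1$ with the minimum of the two first nonzero elements of $S_1$ and $S_2$. The one point deserving a line of care is that $\eta_1$ does not collapse onto $\eta_0$ — i.e.\ that $0$ is not an asymptotically double eigenvalue — which holds because $0$ lies in $S_1$ as a single element and does not lie in $S_2$ at all, so the "$\eta_{n-1}<\eta_n$" branch never produces a repeated value at the bottom of the spectrum.
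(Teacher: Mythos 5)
Your proof is correct and is precisely the argument the paper intends: Corollary \ref{trou} is stated without a separate proof as an immediate consequence of Theorem \ref{main-theorem}, and your identification of $\eta_0=0\in S_1\setminus S_2$ (so the first branch of the recursion applies) together with $\eta_1=\min\{b'(0)-d'(0),\,d'(\xf)-b'(\xf)\}$, both quantities being positive by the standing assumptions, is exactly the required bookkeeping. Nothing further is needed.
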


Let us give some examples.
In the logistic model, we have $d'(0)-b'(0)=b'(\xf)-d'(\xf)=\mu-\lambda$ (asymptotic double eigenvalues), and the spectral gap is equal to $\lambda-\mu$.
Another example is the Ayala-Gilpin-Ehrenfeld model \cite{AGE} defined by $b(x)=\lambda x$, $d(x)=x(\mu+x^\theta)$ where $\theta\in (0,1)$ is a parameter,
and $\lambda>\mu$.
In this case, $d'(0)-b'(0)=\mu-\lambda$, $\xf=(\lambda-\mu)^{1/\theta}$, and $b'(\xf)-d'(\xf)=\theta(\mu-\lambda)$, so the spectral gap is $\theta(\lambda-\mu)$.
Yet another example is Smith's model \cite{smith} defined by $b(x)=\lambda x/(1+x)$, $d(x)=(x(\mu+x))/(1+x)$, where $\lambda>\mu$. One easily finds
$d'(0)-b'(0)=\mu-\lambda$, $\xf=\lambda-\mu$, and $b'(\xf)-d'(\xf)=(\mu-\lambda)/(1+\lambda-\mu)$, so the spectral gap is $(\lambda-\mu)/(1+\lambda-\mu)$.

\subsection{Consequences on relaxation times}

Recall  that the spectral gap is the inverse of the relaxation time to the quasi-stationary distribution, namely
for $\psi\in\ell^2(\piK)$,  $t>0$ and $K\in\integers$ we have 
\[
\left\|\e^{\rho^{\psK}_0 t} \, S^{\psK}_t \psi - \psi_0^{\psK}\langle \psi_0^{\psK},\mathbf{1} \rangle_{\piK} \nuK(\psi)\right\|_{\ell^2(\piK)}
\leq  \|\psi\|_{\ell^2(\piK)}\,\e^{-\big(\rho^{\psK}_1-\rho^{\psK}_0\big)t}.
\]

From Corollary \ref{trou}, it turns out that the relaxation time converges to a finite limit as $K$ tends to infinity. 

We can also characterize the decay of correlations for the  so-called   $Q$-process, namely the birth-and-death process conditioned on survival. Recall that 
the $Q$-process is the irreducible Markov process with state space $\integers$, defined by the semigroup 
\[
R^{\psK}_t g=\e^{\rho_0^{\psK}t} \frac{1}{\psi_0^{\psK}} \, S^{\psK}_t\big(g\psi_0^{\psK}\big).
\]
It satisfies $R^{\psK}_t \un_{\{n\geq 1\}}= \un_{\{n\geq 1\}}$, and its unique invariant distribution $\mathfrak{m}^{\psK}$, defined by
${\big(R_t^{\psK}\big)}^\dagger\mathfrak{m}^{\psK}=\mathfrak{m}^{\psK}$, 
is related to $\nuK$ by $\mathfrak{m}^{\psK}(g)=\nuK(\psi_0^{\psK} g)$ where $\nuK$ has been defined in \eqref{QSD}. 
Indeed we have  $g \in \ldeux(\mathfrak{m}^{\psK})$ if and only if $\psi_0^{\psK}g \in \ldeux(\piK)$, and 
\[
\| g\|^2_{\ldeux(\mathfrak{m}^{\psK})}=\frac{\| \psi_0^{\psK}g\|^2_{\ldeux(\piK)}}{\langle \psi_0^{\psK},\mathbf{1}\rangle_{\piK}}.
\]
Hence, we get the following result.
\begin{proposition}
Let $g\in \ldeux(\mathfrak{m}^{\psK})$. Then for all $t> 0$
\[
\big\| R^{\psK}_t g -\langle \psi_0^{\psK},\mathbf{1}\rangle_{\piK} \mathfrak{m}^{\psK}(g)\big\|_{\ldeux(\mathfrak{m}^{\psK})}
\leq \|g\|_{\ldeux(\mathfrak{m}^{\psK})}\, \e^{-(\rho^{\psK}_1-\rho^{\psK}_0)\,t}.
\]
Furthermore, for $g_1,g_2\in\ldeux(\mathfrak{m}^{\psK})$ and for all $t>0$, we have
\begin{align*}
\MoveEqLeft[6] \bigg|\int R^{\psK}_t g_1\cdot g_2 \, \dd\mathfrak{m}^{\psK}- \int g_1\, \dd\mathfrak{m}^{\psK}\int g_2\, \dd\mathfrak{m}^{\psK}\bigg|\\
& \leq \|g_1\|_{\ldeux(\mathfrak{m}^{\psK})}\|g_2\|_{\ldeux(\mathfrak{m}^{\psK})} \,\e^{-\big(\rho^{\psK}_1-\rho^{\psK}_0\big)t}.
\end{align*}
\end{proposition}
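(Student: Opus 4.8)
The plan is to reduce both estimates to the spectral decomposition \eqref{def-eigen-LK} of $\LKB$ in $\ldeux(\piK)$, through the observation that the $Q$-process semigroup $(R^{\psK}_t)$ on $\ldeux(\mathfrak{m}^{\psK})$ is, up to the scalar factor $\e^{\rho^{\psK}_0 t}$, unitarily equivalent to $(S^{\psK}_t)$ on $\ldeux(\piK)$. Let $U\colon\ldeux(\mathfrak{m}^{\psK})\to\ldeux(\piK)$ be the multiplication operator $g\mapsto\psi^{\psK}_0\,g$. By the identity $\|g\|^2_{\ldeux(\mathfrak{m}^{\psK})}=\langle\psi^{\psK}_0,\mathbf{1}\rangle_{\piK}^{-1}\,\|\psi^{\psK}_0\,g\|^2_{\ldeux(\piK)}$ recalled just before the statement, together with the two-sided bound $D^{-1}\le\psi^{\psK}_0\le D$ from \cite{CCM1}, the map $U$ is a bijection which is a positive scalar multiple of a unitary operator; observe that $g\in\ldeux(\mathfrak{m}^{\psK})$ is precisely the hypothesis $\psi^{\psK}_0\,g\in\ldeux(\piK)$. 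The very definition $R^{\psK}_t g=\e^{\rho^{\psK}_0 t}(\psi^{\psK}_0)^{-1}\,S^{\psK}_t(\psi^{\psK}_0\,g)$ then reads $U R^{\psK}_t U^{-1}=\e^{\rho^{\psK}_0 t}\,S^{\psK}_t$; in particular $R^{\psK}_t$ is self-adjoint on $\ldeux(\mathfrak{m}^{\psK})$.

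Since the closure of $\LKB$ is self-adjoint with compact resolvent in $\ldeux(\piK)$, its real normalized eigenvectors $(\psi^{\psK}_j)_{j\ge0}$ form a complete orthonormal system, so $S^{\psK}_t=\sum_{j\ge0}\e^{-\rho^{\psK}_j t}\,\langle\psi^{\psK}_j,\cdot\rangle_{\piK}\,\psi^{\psK}_j$. Transporting this by $U^{-1}$, the operator $R^{\psK}_t$ has eigenvectors $\psi^{\psK}_j/\psi^{\psK}_0$ and eigenvalues $\e^{-(\rho^{\psK}_j-\rho^{\psK}_0)t}$ for $j\ge0$, the top one ($j=0$) being the constant function $\mathbf{1}$ with eigenvalue $1$. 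A one-line computation from \eqref{QSD} gives $\langle\psi^{\psK}_0,\psi^{\psK}_0\,g\rangle_{\piK}=\langle\psi^{\psK}_0,\mathbf{1}\rangle_{\piK}\,\nuK(\psi^{\psK}_0\,g)=\langle\psi^{\psK}_0,\mathbf{1}\rangle_{\piK}\,\mathfrak{m}^{\psK}(g)$, which identifies the orthogonal projection, in $\ldeux(\mathfrak{m}^{\psK})$, of $g$ onto the top eigenline with the constant $\langle\psi^{\psK}_0,\mathbf{1}\rangle_{\piK}\,\mathfrak{m}^{\psK}(g)\,\mathbf{1}$. The first estimate is then immediate: $R^{\psK}_t g-\langle\psi^{\psK}_0,\mathbf{1}\rangle_{\piK}\,\mathfrak{m}^{\psK}(g)\,\mathbf{1}$ belongs to the orthogonal complement of that eigenline, and on that subspace $R^{\psK}_t$ has operator norm $\e^{-(\rho^{\psK}_1-\rho^{\psK}_0)t}$, since $\rho^{\psK}_1$ is the eigenvalue immediately below $\rho^{\psK}_0$.

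For the correlation bound, set $h:=R^{\psK}_t g_1-\langle\psi^{\psK}_0,\mathbf{1}\rangle_{\piK}\,\mathfrak{m}^{\psK}(g_1)\,\mathbf{1}$, so that $\|h\|_{\ldeux(\mathfrak{m}^{\psK})}\le\e^{-(\rho^{\psK}_1-\rho^{\psK}_0)t}\,\|g_1\|_{\ldeux(\mathfrak{m}^{\psK})}$ by the first part. Combining $R^{\psK}_t\mathbf{1}=\mathbf{1}$ with the invariance $(R^{\psK}_t)^\dagger\mathfrak{m}^{\psK}=\mathfrak{m}^{\psK}$ (that is, $\int R^{\psK}_t g_1\,\dd\mathfrak{m}^{\psK}=\int g_1\,\dd\mathfrak{m}^{\psK}$), a short computation shows that $\int R^{\psK}_t g_1\cdot g_2\,\dd\mathfrak{m}^{\psK}-\int g_1\,\dd\mathfrak{m}^{\psK}\int g_2\,\dd\mathfrak{m}^{\psK}=\int h\cdot g_2\,\dd\mathfrak{m}^{\psK}$, and the Cauchy--Schwarz inequality in $\ldeux(\mathfrak{m}^{\psK})$ bounds the right-hand side by $\|h\|_{\ldeux(\mathfrak{m}^{\psK})}\,\|g_2\|_{\ldeux(\mathfrak{m}^{\psK})}\le\e^{-(\rho^{\psK}_1-\rho^{\psK}_0)t}\,\|g_1\|_{\ldeux(\mathfrak{m}^{\psK})}\,\|g_2\|_{\ldeux(\mathfrak{m}^{\psK})}$. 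I do not expect any genuine obstacle here; the points requiring care are the bookkeeping of the normalizations of $\psi^{\psK}_0$ in $\ldeux(\piK)$ and of $\mathfrak{m}^{\psK}$ (so that the constant $\langle\psi^{\psK}_0,\mathbf{1}\rangle_{\piK}$ appears exactly as in the statement), the legitimacy of the spectral expansion of $S^{\psK}_t$ applied to $\psi^{\psK}_0\,g\in\ldeux(\piK)$ (which rests on the completeness of the eigenbasis, itself a consequence of the compactness of the resolvent of $\LKB$), and the self-adjointness of $R^{\psK}_t$ on $\ldeux(\mathfrak{m}^{\psK})$ --- which is exactly what the conjugation by $U$ delivers.
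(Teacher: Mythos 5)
Your treatment of the first bound is correct and is, in substance, exactly the argument the paper has in mind: the statement is presented there as an immediate consequence of the displayed $\ell^2(\piK)$ spectral-gap inequality together with the conjugation $R^{\psK}_t g=\e^{\rho_0^{\psK}t}(\psi_0^{\psK})^{-1}S^{\psK}_t(\psi_0^{\psK}g)$ and the norm identity $\|g\|^2_{\ldeux(\mathfrak{m}^{\psK})}=\|\psi_0^{\psK}g\|^2_{\ldeux(\piK)}/\langle\psi_0^{\psK},\mathbf{1}\rangle_{\piK}$, which is precisely your map $U$; identifying $\langle\psi_0^{\psK},\mathbf{1}\rangle_{\piK}\,\mathfrak{m}^{\psK}(g)\,\mathbf{1}$ as the $\ldeux(\mathfrak{m}^{\psK})$-orthogonal projection of $g$ onto the constants and using the spectral expansion on the orthogonal complement is the same route, just spelled out.

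For the second bound, however, there is a normalization point that your ``short computation'' glosses over, and it is exactly where the argument does not go through under the paper's conventions. With $\|\psi_0^{\psK}\|_{\ldeux(\piK)}=1$ and $\mathfrak{m}^{\psK}(g)=\nuK(\psi_0^{\psK}g)$ one has $\mathfrak{m}^{\psK}(\mathbf{1})=\nuK(\psi_0^{\psK})=1/\langle\psi_0^{\psK},\mathbf{1}\rangle_{\piK}$, so $\mathfrak{m}^{\psK}$ is in general \emph{not} a probability measure. Consequently $\int \big(\langle\psi_0^{\psK},\mathbf{1}\rangle_{\piK}\mathfrak{m}^{\psK}(g_1)\big)\,g_2\,\dd\mathfrak{m}^{\psK}=\langle\psi_0^{\psK},\mathbf{1}\rangle_{\piK}\,\mathfrak{m}^{\psK}(g_1)\mathfrak{m}^{\psK}(g_2)$, and your identity $\int R^{\psK}_t g_1\cdot g_2\,\dd\mathfrak{m}^{\psK}-\int g_1\,\dd\mathfrak{m}^{\psK}\int g_2\,\dd\mathfrak{m}^{\psK}=\int h\,g_2\,\dd\mathfrak{m}^{\psK}$ picks up the extra term $\big(\langle\psi_0^{\psK},\mathbf{1}\rangle_{\piK}-1\big)\mathfrak{m}^{\psK}(g_1)\mathfrak{m}^{\psK}(g_2)$, which neither $R^{\psK}_t\mathbf{1}=\mathbf{1}$ nor the invariance of $\mathfrak{m}^{\psK}$ removes; it requires $\mathfrak{m}^{\psK}(\mathbf{1})=1$. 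What your Cauchy--Schwarz step actually proves is the inequality with the centering $\langle\psi_0^{\psK},\mathbf{1}\rangle_{\piK}\,\mathfrak{m}^{\psK}(g_1)\mathfrak{m}^{\psK}(g_2)$ in place of the product of the means --- and this is the form forced by the first bound upon letting $t\to\infty$, so the discrepancy is a normalization inconsistency of the displayed statement as much as of your proof (it disappears if $\mathfrak{m}^{\psK}$ is renormalized into a probability measure, with the norm identity and the first bound adjusted accordingly). As a proof of the second display exactly as written, though, that one identity is the gap, and you should either make the renormalization explicit or state and prove the corrected centering.
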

As before, the rate of decay of correlations converges when $K$ goes to infinity. 

\subsection{Organization of the paper}

The proof of the Theorem \ref{main-theorem} relies on two results stated in Section \ref{preuvemain}. Theorem \ref{liminf} ensures that the 
set $S_{1}\cup S_{2}$ is contained in the set of accumulation points of the eigenvalues of $\L_{\sK}$ when $K$ tends to infinity. 
The proof is based on the construction of quasi-eigenvectors and is given in Section \ref{proof-maintheorem}.
The second result is Theorem \ref{prsquelafin} which ensures that all the previous accumulation points are contained in $S_{1}\cup S_{2}$ taking care of 
eventual multiplicities. Its proof, given in section \ref{proof-maintheorem}, relies on two propositions. The first one (Proposition \ref{strucvp}) is the splitting of the 
eigenvectors of  $\LKl$  into two dominant parts, one localised near the origin, the other one near $\lfloor K\xf\rfloor$. 
The second (Proposition \ref {decoupe}) relies on compactness arguments of each piece of the previous splitting. Section \ref{auxiliaires} collects various 
auxiliary results (some of more general nature).

One of the main difficulties of the proof is that the two pieces of the spectrum correspond to limiting operators which are obtained at
different scales and leave in different function spaces. 

\section{Standing assumptions}\label{sec:assumptions}

We work under the assumptions of \cite{CCM1} which we recall for convenience. 

The functions $b,d:\mathds{R}_+\to\mathds{R}_+$ defining the differential equation \eqref{the-edo} are supposed to be such that  
\[
b(0) = d(0)=0
\]
and  the functions $x\mapsto b(x)/x$ and $x\mapsto d(x)/x$ are defined on $\mathds{R}_+$ and assumed to be positive, twice differentiable and increasing (in particular
the sequences $(\lambdaK_{n})_n$ and $(\muK_{n})_n$ defined in \eqref{def-lambdaKnmuKn} are increasing for each $K$).

We start by the biologically relevant assumptions:
\begin{itemize}
\renewcommand{\labelitemi}{$\centerdot$}
\item
$\lim_{x\to+\infty} \frac{b(x)}{d(x)}=0$ (deaths prevail over births for very large densities).
\item
$b'(0)>d'(0)>0$ (at low density births prevail).
\item
There is a unique $\xf>0$ such that $b(\xf)=d(\xf)$, so $\xf$ is the only positive fixed point of \eqref{the-edo}.
\end{itemize}
We assume that $ b'(\xf)\neq d'(\xf)$ (genericity condition). The remaining (technical) assumptions are the following:
\begin{itemize}
\renewcommand{\labelitemi}{$\centerdot$}
\item
$\mathlarger{\int}_{\frac{\xf}{2}}^{+\infty} \frac{\dd x}{d(x)}<+\infty$ and $\sup_{x\in\mathds{R}_+} \big(\frac{d'(x)}{d(x)}-\frac{1}{x}\big)<+\infty$. 
\item
The function $x\mapsto \log\frac{d(x)}{b(x)}$ is increasing on $\mathds{R}_+$.
\item
The function $H:\mathds{R}_+\to\mathds{R}$ defined by $H(x)=\int_{\xf}^x  \log\frac{d(s)}{b(s)}\dd s$ is three times differentiable, and $\sup_{x\in\mathds{R}_+}(1+x^2)|H'''(x)|<+\infty$.
\end{itemize}
The assumptions imply that $0$ is a repulsive (or unstable) fixed point of \eqref{the-edo}, whereas $\xf$ is an attractive (or stable) one, that is, $b'(\xf)<d'(\xf)$. It also follows that $H''(\xf)>0$.
These assumptions are satisfied for many classical examples.  

As explained in \cite{CCM1}, the above conditions imply the following properties:
\begin{itemize}
\renewcommand{\labelitemi}{$\centerdot$}
\item $\sum_{n\geq 1} (\lambdaK_n \piK_n)^{-1}=+\infty$, which implies that the process reaches $0$ in finite time with probability one.
\item
$\sum_{n\geq 1} \piK_n<+\infty$, which implies finiteness of the mean time to extinction.
\item $\sum_{n\geq 1} (\lambdaK_n \piK_n)^{-1} \sum_{i\geq n+1} \piK_i<+\infty$, which is a necessary and sufficient condition for existence and uniqueness of the quasi-stationary distribution.
\end{itemize}
We add a last condition to the previous ones, namely
\begin{equation}\label{hipopo}
\lim_{x\to\infty}\frac{\log b(x)}{x}=\lim_{x\to\infty}\frac{\log d(x)}{x}=0.
\end{equation}
We could avoid it makes our life easier and we don't have any natural example which does not satisfy it.

%%%%%%%%%%%%%%%% SECTION
\section{Proof of Theorem \ref{main-theorem}}\label{preuvemain}

\subsection{Some useful operators}

Instead of working with $L_{\sK}$ on the weighted Hilbert space $\ell^2(\piK)$, we find more convenient to work on the `flat' Hilbert space $\ldeux$.
We introduce the conjugated operator
\begin{equation*}
%\label{conj}
\LKl=(\Pi^{\psK})^{\frac{1}{2}}\LKB\,(\Pi^{\psK})^{-\frac{1}{2}}
\end{equation*}
where $\Pi^{\psK}$ denotes the mutiplication operator 
\[
\Pi^{\psK} v(n)=\piK_{n}v(n)
\]
for $v\in c_{00}$ and $n\in\integers$. One can check that
\begin{align*}
%\nonumber
& \big(\LKl v\big)(n)\\
& =\sqrt{\lambdaK_{n}\,\muK_{n+1}}\,v(n+1)+\sqrt{\lambdaK_{n-1}\,\muK_{n}}\,v(n-1)\,\un_{\{n>1\}}-\big(\lambdaK_{n}+\muK_{n}\big)\,v(n)
%\label{LKn}
\end{align*}
for $n\in\integers$.
We denote also by $\LKl$ its closure in $\ldeux$ and by $\Dom(\LKl)$ its domain, and we have
\begin{equation*}
%\label{def-eigen-LKronde}
\LKl \phi^{\psK}_j=-\rho^{\psK}_j \phi^{\psK}_j
\end{equation*}
where the eigenvalues $-\rho^{\psK}_j$ are the same as for $L_{\sK}$ (cf. \eqref{def-eigen-LK}), and $\phi^{\psK}=\big(\Pi^{\psK}\big)^{\frac{1}{2}}\psi^{\psK}$.
To capture the behavior of the eigenvectors of $\LKl$ near $\lfloor K\xf\rfloor$ at scale $\sqrt{K}$, we are going to embed $\ldeux$ into $\Ldeux$.
For this purpose we define for each $K\in\integers$ the functions
\[
\eK{n}(x)=K^{\frac{1}{4}}\,\un_{I^{{\scriptscriptstyle (K)}}_{n} }(x),\; x\in\real,\;n\in\integers
\]
where 
\[
I^{\scriptscriptstyle (K)}_{n}=\left[\frac{n-0.5}{\sqrt{K}}-\xf\sqrt{K}, \frac{n+0.5}{\sqrt{K}}-\xf\sqrt{K}\right[.
\]
The functions $\eK{n}$ are orthogonal and of norm one in $\Ldeux$. They form a basis of a sub-Hilbert space $\HK_{\sK}$ of piecewise constant 
functions in $\Ldeux$.
We define two maps denoted by  $Q_{\sK}$ and $P_{\sK}$ as follows:
\begin{equation*}
%\label{QQ}
Q_{\sK}: \ldeux\to\Ldeux,\quad
Q_{\sK} u(x)=\sum_{n\geq 1}u(n)\,\eK{n}(x)\
\end{equation*}
and 
\begin{equation}\label{PP}
P_{\sK}:\Ldeux \to \ldeux,\quad
P_{\sK}f(n)=\int f(x)\,\eK{n}(x)\,\dd x\;, n\in\integers.
\end{equation}
We will use the following properties of $P_{\sK}$ and $Q_{\sK}$ stated as two lemmas.  
\begin{lemma}\label{isometry}
For each $K\in\integers$, the map $Q_{\sK}$ is an isometry between $\ldeux$ and $\HK_{\sK}$. 
\end{lemma}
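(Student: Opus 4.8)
The plan is to show that the family $\{\eK{n}\}_{n\geq 1}$ is an orthonormal system in $\Ldeux$, so that $Q_{\sK}$ is nothing but the associated coordinate map into the closed linear span of that family, which is $\HK_{\sK}$ by definition.

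First I would record that the intervals $I^{\scriptscriptstyle (K)}_{n}$, $n\geq 1$, are pairwise disjoint (consecutive ones share only an endpoint, of Lebesgue measure zero, and $I^{\scriptscriptstyle (K)}_{n}\cap I^{\scriptscriptstyle (K)}_{m}=\emptyset$ as soon as $|n-m|\geq 2$) and each has length $K^{-1/2}$. Hence $\int \eK{n}(x)\,\eK{m}(x)\,\dd x = K^{1/2}\,|I^{\scriptscriptstyle (K)}_{n}\cap I^{\scriptscriptstyle (K)}_{m}| = \delta_{n,m}$, so $\{\eK{n}\}_{n\geq 1}$ is orthonormal in $\Ldeux$.

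Next, for $u\in\ldeux$ the partial sums $\sum_{n=1}^{N} u(n)\eK{n}$ form a Cauchy sequence in $\Ldeux$, since by orthonormality $\big\|\sum_{n=M+1}^{N} u(n)\eK{n}\big\|_{\Ldeux}^{2}=\sum_{n=M+1}^{N}|u(n)|^{2}\to 0$ as $M,N\to\infty$. Thus $Q_{\sK}u$ is a well-defined element of $\Ldeux$, and letting $N\to\infty$ in Pythagoras' identity gives $\|Q_{\sK}u\|_{\Ldeux}^{2}=\sum_{n\geq 1}|u(n)|^{2}=\|u\|_{\ldeux}^{2}$. Therefore $Q_{\sK}$ is a linear isometry. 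Its range is the set of $\Ldeux$-limits of finite linear combinations of the $\eK{n}$, that is, the closed linear span of $\{\eK{n}\}_{n\geq 1}$, which is exactly $\HK_{\sK}$; hence $Q_{\sK}$ maps $\ldeux$ isometrically onto $\HK_{\sK}$.

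There is no real obstacle in this lemma; the only two points worth a line are the convergence of the defining series for a general $u\in\ldeux$ (and not merely $u\in c_{00}$) and the identification of the range with $\HK_{\sK}$, both of which are immediate consequences of the orthonormality established in the first step.
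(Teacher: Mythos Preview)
Your proof is correct and is precisely the standard argument the paper leaves to the reader: orthonormality of the $\eK{n}$ followed by Parseval's identity. One tiny inaccuracy: since the intervals $I^{\scriptscriptstyle (K)}_{n}$ are defined to be half-open, consecutive ones are in fact genuinely disjoint and share no point at all, but this of course does not affect your argument.
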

The proof of this lemma is left to the reader.
\begin{lemma}\label{isomQP}
Let $f\in C^{1}(\real)$ and assume  that there exists $a>0$ and $A>0$ such that 
\[
\big|f(x)\big|+\big|f'(x)\big|\le A\,\e^{-a\,|x|},\, x\in\real.
\]
Then 
\begin{itemize}
\item[\textup{(i)}] $\lim_{K\to\infty}\big\|f-Q_{\sK}P_{\sK}f\big\|_{\Ldeux}=0$\,.
\item[\textup{(ii)}] $\lim_{K\to\infty}\big\|P_{\sK}f\big\|_{\ldeux}=\big\|f\big\|_{\Ldeux}$\,.
\end{itemize}
\end{lemma}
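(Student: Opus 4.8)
The plan is to recognise $Q_{\sK}P_{\sK}$ as a local-averaging operator (a conditional expectation against the partition into the intervals $I^{(K)}_{n}$, $n\geq 1$) and to deduce (i) from the $C^{1}$-control on $f$; statement (ii) will then follow immediately from (i) together with the isometry property of $Q_{\sK}$ provided by Lemma \ref{isometry}.

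First I would unwind the definitions. The half-open intervals $\{I^{(K)}_{n}\}_{n\geq 1}$ each have length $1/\sqrt K$ and tile the half-line $[(0.5-K\xf)/\sqrt K,\,+\infty)$. For $x\in I^{(K)}_{m}$ one computes $(Q_{\sK}P_{\sK}f)(x)=K^{1/4}(P_{\sK}f)(m)=\sqrt K\int_{I^{(K)}_{m}}f(y)\,\dd y$, i.e.\ the mean value of $f$ over $I^{(K)}_{m}$, whereas $(Q_{\sK}P_{\sK}f)(x)=0$ for $x<(0.5-K\xf)/\sqrt K$ since only indices $n\geq 1$ enter the sums defining $Q_{\sK}$ and $P_{\sK}$. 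Because $(0.5-K\xf)/\sqrt K=0.5/\sqrt K-\xf\sqrt K\to-\infty$ as $K\to\infty$, the hypothesis $|f(x)|\le A\,\e^{-a|x|}$ gives $\int_{-\infty}^{(0.5-K\xf)/\sqrt K}|f(x)|^{2}\,\dd x\to 0$, which disposes of the ``uncovered'' part of the line.

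Next I would estimate the error on the covered part. Writing $m_{n}$ for the mean of $f$ over $I^{(K)}_{n}$, the fundamental theorem of calculus gives $|f(x)-m_{n}|\le\int_{I^{(K)}_{n}}|f'|$ for every $x\in I^{(K)}_{n}$, so by Cauchy--Schwarz $\int_{I^{(K)}_{n}}|f(x)-m_{n}|^{2}\,\dd x\le |I^{(K)}_{n}|^{2}\int_{I^{(K)}_{n}}|f'|^{2}=K^{-1}\int_{I^{(K)}_{n}}|f'|^{2}$. Summing over $n\geq 1$ and adding the uncovered contribution,
\[
\big\|f-Q_{\sK}P_{\sK}f\big\|_{\Ldeux(\real)}^{2}\le\frac1K\int_{\real}|f'(x)|^{2}\,\dd x+\int_{-\infty}^{(0.5-K\xf)/\sqrt K}|f(x)|^{2}\,\dd x,
\]
and both terms on the right tend to $0$: the first because $\int_{\real}|f'|^{2}\le A^{2}\int_{\real}\e^{-2a|x|}\,\dd x=A^{2}/a<\infty$, the second by the exponential tail bound just noted. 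This proves (i).

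Finally, for (ii) I would simply invoke Lemma \ref{isometry}, which gives $\|P_{\sK}f\|_{\ldeux}=\|Q_{\sK}P_{\sK}f\|_{\Ldeux}$ for every $K$; by (i) the right-hand side converges to $\|f\|_{\Ldeux}$, whence (ii). I do not expect a genuine obstacle here; the only points requiring a little care are the bookkeeping of the one-sided index range $n\geq 1$ (so that the missing half-line is absorbed by the exponential tail of $f$) and the use of Cauchy--Schwarz to bound the averaging error by $\|f'\|_{\Ldeux}^{2}$ rather than by pointwise suprema of $f'$. In fact the exponential decay assumption is used only to guarantee $f,f'\in\Ldeux(\real)$; any hypothesis ensuring that would do.
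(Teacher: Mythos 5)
Your proof is correct and follows essentially the same route as the paper: identify $Q_{\sK}P_{\sK}f$ as the piecewise average of $f$ over the cells $I^{\psK}_n$, control the cell error by $f'$, and deduce (ii) from (i) via the isometry of $Q_{\sK}$ on $\HK_{\sK}$ (Lemma \ref{isometry}). The only difference is in the bookkeeping of the error: the paper uses the pointwise estimate $Q_{\sK}P_{\sK}f(x)=f(x)+\mathcal{O}(K^{-1/2})\e^{-a|x|}$, whereas you use a cell-wise $\Ldeux$ Poincar\'e-type bound giving $K^{-1}\|f'\|_{\Ldeux}^2$, which also makes explicit the uncovered half-line $(-\infty,\,0.5/\sqrt K-\xf\sqrt K)$ that the paper treats implicitly, and correctly shows that $f,f'\in\Ldeux$ would suffice.
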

\begin{proof}
Let us prove (i). 
We have $Q_{\sK}P_{\sK}f(x)=e^{\psK}_{n(x)}(x)\;\int e^{\psK}_{n(x)}(y)\,f(y) \,\dd y$
for $(n(x)-0.5)/\sqrt{K}-\xf\le x \le (n(x)+0.5)/\sqrt{K}-\xf$.
We get from our hypothesis
\[
Q_{\sK}P_{\sK}f(x)=f(x)+\mathcal{O}\big(K^{-\frac{1}{2}}\big)\,\e^{-a|x|}
\]
and the result follows.\newline
We now prove (ii).
From the isometric property of $Q_{K}$ on the space of piecewise functions $\HK_{\sK}$, we get
\[
\big\|P_{\sK}f\big\|_{\ldeux}=\big\|Q_{\sK}P_{\sK}f\big\|_{\Ldeux}
\]
and the result follows from (i).
\end{proof}

We now introduce the operator 
\begin{equation*}
%\label{ouf}
\LKL= Q_{\sK}\LKl P_{\sK}
\end{equation*}
and we keep the same notation for its closure in $\Ldeux$. Since  $\LKL$ when acting on $\HK_{\sK}$ is conjugated to $\LKl$, we have 
\[
\LKL\varphi^{\psK}_j=-\rho^{\psK}_j \varphi^{\psK}_j.
\]
We will prove in the next proposition that the operator $\LKL$ converges weakly, when $K\to+\infty$, to the operator
\begin{align}
\label{harm}
\MoveEqLeft[4] \HO f(x)=\\
& b(\xf)\,\frac{\dd^{2}f(x)}{\dd x^{2}}-\frac{\big(d'(\xf)-b'(\xf)\big)^{2}}{4b(\xf)}\,x^{2}f(x)+\frac{d'(\xf)-b'(\xf)}{2}f(x) .
\nonumber
\end{align}

\begin{proposition}\label{convS}
Let $f\in C^{3}(\real)$ and assume  that there exist $a>0$ and $A>0$ such that
\[
\sum_{j=0}^{3}\big|f^{(j)}(x)\big|\le A\, \e^{-a\,|x|},\, x\in\real.
\]
Then 
\[
\lim_{K\to\infty}\big\|\LKL f-\HO f\big\|_{\Ldeux}=0\,.
\]
\end{proposition}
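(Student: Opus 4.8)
The plan is to compute $\LKL f=Q_{\sK}\LKl P_{\sK}f$ explicitly (this is meaningful since $P_{\sK}f$ decays super-exponentially, hence lies in $\Dom(\LKl)$), to Taylor-expand in powers of $K^{-1/2}$, and to use the exponential decay of $f,f',f'',f'''$ to control every error term uniformly enough to upgrade pointwise convergence to convergence in $\Ldeux$. Set $x_n:=n/\sqrt K-\xf\sqrt K$, the midpoint of $I_n^{\psK}$, so that $n/K=\xf+x_n/\sqrt K$ and $x_{n\pm1}=x_n\pm K^{-1/2}$. First I would record, using $P_{\sK}f(n)=K^{1/4}\int_{I_n^{\psK}}f$ and Taylor's formula with integral remainder (here is where $f\in C^3$ enters), the estimates $P_{\sK}f(n)=K^{-1/4}\big(f(x_n)+\mathcal{O}(K^{-1})\big)$, $P_{\sK}f(n+1)+P_{\sK}f(n-1)-2P_{\sK}f(n)=K^{-1/4}\big(K^{-1}f''(x_n)+\mathcal{O}(K^{-3/2})\big)$ and $P_{\sK}f(n+1)-P_{\sK}f(n-1)=K^{-1/4}\big(2K^{-1/2}f'(x_n)+\mathcal{O}(K^{-3/2})\big)$, each $\mathcal{O}(\cdot)$ being uniform in $K\ge1$ and carrying a factor $\e^{-a'|x_n|}$ for any fixed $a'<a$.

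Next I would rewrite the three-term recurrence defining $\LKl$ in ``centred'' form,
\[
(\LKl v)(n)=\tfrac{a_n+c_n}{2}\big(v(n+1)+v(n-1)-2v(n)\big)+\tfrac{a_n-c_n}{2}\big(v(n+1)-v(n-1)\big)+\big(a_n+c_n-\lambdaK_n-\muK_n\big)v(n),
\]
with $a_n:=\sqrt{\lambdaK_n\muK_{n+1}}$ and $c_n:=\sqrt{\lambdaK_{n-1}\muK_n}$, the $n=1$ boundary term being a single summand of size $\mathcal{O}(\e^{-c\sqrt K})$ to be discarded. On the central range $|n-K\xf|\le K\xf/2$, expanding $b$ and $d$ to second order around $\xf$ (recall $b(\xf)=d(\xf)$) gives $\tfrac{a_n+c_n}{2}=Kb(\xf)+\mathcal{O}(\sqrt K(1+|x_n|))$, $\tfrac{a_n-c_n}{2}=\mathcal{O}(1)$, and
\[
a_n+c_n-\lambdaK_n-\muK_n=\tfrac{d'(\xf)-b'(\xf)}{2}-\tfrac{(d'(\xf)-b'(\xf))^2}{4b(\xf)}\,x_n^2+\mathcal{O}\big(K^{-1/2}(1+|x_n|^3)\big),
\]
the quadratic coefficient emerging from the identity $4b'(\xf)d'(\xf)-(b'(\xf)+d'(\xf))^2=-(b'(\xf)-d'(\xf))^2$. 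Combining with the difference estimates, the first-difference contribution is $K^{-1/4}\mathcal{O}(K^{-1/2})$, the second-difference contribution is $K^{-1/4}\big(b(\xf)f''(x_n)+\mathcal{O}(K^{-1/2})\big)$, and the diagonal contribution is $K^{-1/4}\big((\tfrac{d'(\xf)-b'(\xf)}{2}-\tfrac{(d'(\xf)-b'(\xf))^2}{4b(\xf)}x_n^2)f(x_n)+\mathcal{O}(K^{-1/2})\big)$, the polynomial prefactors of the errors being absorbed because $(1+|x_n|^3)|f(x_n)|=\mathcal{O}(\e^{-a'|x_n|})$. Recognising \eqref{harm}, this reads $(\LKl P_{\sK}f)(n)=K^{-1/4}\big(\HO f(x_n)+\varepsilon_n(K)\big)$ with $|\varepsilon_n(K)|\le CK^{-1/2}(1+|x_n|^3)\,\e^{-a'|x_n|}$ on the central range; on its complement $P_{\sK}f$ and its shifts are $\mathcal{O}(K^{-1/4}\e^{-c\sqrt K})$ while $\lambdaK_n+\muK_n+a_n+c_n\le CK\,\e^{o(1)\,n/K}$ by \eqref{hipopo}, so that part of $\LKl P_{\sK}f$ has $\ldeux$-norm $\mathcal{O}(\e^{-c'\sqrt K})$.

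Finally, since $Q_{\sK}$ multiplies the value on each $I_n^{\psK}$ by $K^{1/4}$, for $x\in I_{n(x)}^{\psK}$ we get $\LKL f(x)=\HO f(x_{n(x)})+\varepsilon_{n(x)}(K)$. The discrepancy $\HO f(x_{n(x)})-\HO f(x)$ is bounded by $\tfrac12 K^{-1/2}\sup_{|\xi-x|\le1}|(\HO f)'(\xi)|$, which is in $\Ldeux$ because the exponential decay of $f$ and its derivatives dominates the quadratic prefactor in \eqref{harm}, so it contributes $\mathcal{O}(K^{-1/2})$ to the $\Ldeux$-norm. For the remainder, $\|\varepsilon_{n(\cdot)}(K)\|_{\Ldeux}^2=\sum_n|\varepsilon_n(K)|^2K^{-1/2}\le CK^{-1}\sum_n(1+|x_n|^3)^2\e^{-2a'|x_n|}K^{-1/2}$, and the last sum is a Riemann sum for $\int(1+|x|^3)^2\e^{-2a'|x|}\dd x<\infty$, hence $\mathcal{O}(K^{-1})$. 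Together with the tail estimate this gives $\|\LKL f-\HO f\|_{\Ldeux}\to0$.

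The expansion algebra is tedious but forced: the cancellations necessarily reproduce \eqref{harm}. The genuine obstacle is the uniformity of the error control needed to go from pointwise to $\Ldeux$-convergence — keeping the polynomial-in-$x_n$ prefactors of the Taylor remainders explicit so they are beaten by the exponential decay of $f$, and treating the range $n\gg K\xf$ where expanding around $\xf$ is meaningless, for which one falls back on the super-exponential smallness of $P_{\sK}f$ combined with the sub-exponential growth of the rates granted by \eqref{hipopo}.
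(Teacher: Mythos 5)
Your proposal is correct and takes essentially the same route as the paper's own proof: Taylor expansion of the cell averages $P_{\sK}f(n\pm 1)$ together with a second-order expansion of the rates around $K\xf$ to make $\HO$ emerge, with the exponential decay of $f,f',f'',f'''$ and assumption \eqref{hipopo} disposing of the regions away from $K\xf$. The only difference is bookkeeping — the paper cuts at $|x|\le(\log K)^2$ and uses a uniform pointwise error there, while you keep weighted error bounds over the whole central range and sum them as a Riemann sum — and this is immaterial (your stated remainder $\mathcal{O}(K^{-1/2}(1+|x_n|^3))$ is slightly stronger than the standing $C^2$-type assumptions on $b,d$ strictly give far from $\xf$, but there the factor $\e^{-a'|x_n|}$ absorbs any polynomially worse error, so the conclusion is unaffected).
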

\begin{proof}
By the assumption made on $f$, it follows easily that
\[
\lim_{K\to\infty}\big\|\un_{\{|\,\cdot\, |>(\log K)^{2}\}}\;\HO f\big\|_{\Ldeux}=0.
\]
We have
\begin{align}\label{LKL}
& \LKL f(x)=
\sum_{n\geq 1} e^{\psK}_{n}(x)\,\left(\sqrt{\lambdaK_{n}\,\muK_{n+1}}\,\int e^{\psK}_{n+1}(y)\,f(y)\,\dd y\right. \\
\nonumber
& \left.+\sqrt{\lambdaK_{n-1}\,\muK_{n}}\,\un_{\{n>1\}} \,\int e^{\psK}_{n-1}(y)\,f(y)\,\dd y -\big(\lambdaK_{n}+\muK_{n}\big)\,\int e^{\psK}_{n}(y)\,f(y)\,\dd y\right).
\end{align}
It follows easily from the assumption made on $f$ and assumption \eqref{hipopo} that 
\[
\lim_{K\to\infty}\big\|\un_{\{|\,\cdot\, |>(\log K)^{2}\}}\LKL f\big\|_{\Ldeux}=0\,. 
\]
Therefore we only have to consider $|x|\le (\log K)^{2}$.
Note also that for such an   $x$, the sum in \eqref{LKL} reduces to one element for $K$ large enough, namely $n=n(x) = \big\lfloor K\xf +\sqrt{K} x +\frac{1}{2}\big\rfloor$. 
For $x\in\real$, we have
\begin{align*}
\LKL f(x) 
& = \e^{\psK}_{n(x)}(x)\left(\sqrt{\lambdaK_{n(x)}\muK_{n(x)+1}}\,\int e^{\psK}_{n(x)+1}(y)f(y)\,\dd y\right. \\
& \left.\hspace{2.1cm}+\sqrt{\lambdaK_{n(x)-1}\,\muK_{n(x)}} \int e^{\psK}_{n(x)-1}(y)f(y)\,\dd y\right.\\
& \left. \hspace{2.1cm}-\,\big(\lambdaK_{n(x)}+\muK_{n(x)}\big)\int e^{\psK}_{n(x)}(y) f(y)\,\dd y\right)\\
&= e^{\psK}_{n(x)}(x)\left(\sqrt{\lambdaK_{n(x)}\,\muK_{n(x)+1}}\int e^{\psK}_{n(x)}(y)f\left(y-\frac{1}{\sqrt{K}}\right)\dd y\right.\\
& \left.\hspace{2cm}+\sqrt{\lambdaK_{n(x)-1}\,\muK_{n(x)}} \int e^{\psK}_{n(x)}(y) f\left(y+\frac{1}{\sqrt{K}}\right)\dd y\right.\\
& \left. \hspace{2cm}-\big(\lambdaK_{n(x)}+\muK_{n(x)}\big)\int e^{\psK}_{n(x)}(y)f(y)\,\dd y\right).
\end{align*}
Now we have
\begin{align*}
\MoveEqLeft \int e^{\psK}_{n(x)}(y)\,f(y)\,\dd y \\
& =K^{\frac{1}{4}}\int_{-\frac{1}{2\sqrt{K}}}^{\frac{1}{2\sqrt{K}}}f\left(\frac{n(x)-\Kxf}{\sqrt{K}}-s\right)\,\dd s\\
&=K^{-\frac{1}{4}}\,f\left(\frac{n(x)-\Kxf}{\sqrt{K}}\right)+\frac{K^{-\frac{5}{4}}}{24}f''\left(\frac{n(x)-\Kxf}{\sqrt{K}}\right)+\mathcal{O}\Big(K^{-\frac{7}{4}}\Big)
\end{align*}
where the error term is unifom in $x$. Similarly
\begin{align*}
\MoveEqLeft \int e^{\psK}_{n(x)}(y)\,f\left(y\pm \frac{1}{\sqrt{K}}\right)\dd y \\
&= \int e^{\psK}_{n(x)}(y)\,f(y)\,\dd y \pm\frac{1}{\sqrt{K}}\int e^{\psK}_{n(x)}(y)\,f'(y)\,\dd y \\
&\quad +\frac{1}{2K}\int e^{\psK}_{n(x)}(y)\,f''(y)\,\dd y+ \mathcal{O}\Big(K^{-\frac{7}{4}}\Big)\\
& =K^{-\frac{1}{4}}\,f\left(\frac{n(x)-\Kxf}{\sqrt{K}}\right) \pm K^{-\frac{3}{4}} \,f'\left(\frac{n(x)-\Kxf}{\sqrt{K}}\right)\\
& \quad +\frac{13\,K^{-\frac{5}{4}}}{24} f''\left(\frac{n(x)-\Kxf}{\sqrt{K}}\right)+\mathcal{O}\big(K^{-\frac{7}{4}}\big).
\end{align*}
Recall that 
\[
\lambdaK_{n}=K b\left(\frac{n}{K}\right)\quad\text{and}\quad \muK_{n}=K d\left(\frac{n}{K}\right)
\]
hence
\[
\lambdaK_{n}=K b\big(\xf\big)+(n-\Kxf) \,b'\big(\xf\big)+\mathcal{O}\left(\frac{(n-\Kxf)^{2}}{K}\right)
\]
and 
\[
\muK_{n}=K d\big(\xf\big)+(n-\Kxf)\,d'\big(\xf\big)+\mathcal{O}\left(\frac{(n-\Kxf)^{2}}{K}\right).
\]
After a tedious but straightforward computation, we obtain that
\begin{align*}
\LKL f(x)
&=\HO f\left(\frac{n(x)-\Kxf}{\sqrt{K}}\right)+\mathcal{O}\big(K^{-\frac{1}{4}}\big)\\
&=\HO f(x)+\mathcal{O}\big(K^{-\frac{1}{4}}\;(\log K)^{4}\big)
\end{align*}
and the error term is uniform in $|x|\le (\log K)^{2}$.
We get
\[
\big\|\un_{\{|\,\cdot\, |\leq (\log K)^{2}\}}\;\big(\LKL f-\HO f\big)\big\|_{\Ldeux}=\mathcal{O}\big(K^{-\frac{1}{4}}\;(\log K)^{6}\big)
\]
and the result follows.
\end{proof}

\begin{remark}\label{rem:OUHO}
Let us recall the relationship between the generator of the Ornstein-Uhlenbeck process \eqref{def-OU-generator} and \eqref{harm} which is, up to a minus sign and a shift, the Schr\"odinger operator
for the quantum harmonic oscillator. We refer to {\em e.g.} \cite[Chapter 3]{babusci-et-al} or \cite[Sections 4.4 and 4.9]{pavliotis}.
In $\Ldeux$, the eigenvalues of\, $\HO$ are $-\big(d'(\xf)-b'(\xf))n$, $n\in\entiers$, and the corresponding eigenfunctions are
\begin{align}\label{eigenstates-HO}
&\psi_n(x)=\\
&\frac{1}{\sqrt{2^n n!}} \left(\frac{\big(d'(\xf)-b'(\xf)\big)^2}{2\pi b(\xf)}\right)^{\frac{1}{4}} \e^{-\frac{d'(\xf)-b'(\xf)}{4b(\xf)}x^2}H_n\left(\sqrt{\frac{d'(\xf)-b'(\xf)}{2b(\xf)}}x\right)
\nonumber
\end{align}
where $(H_n)_n$ is the family of the physicists' Hermite polynomials defined by
\[
H_n(x)=(-1)^n \e^{x^2} \frac{\dd^n}{\dd x^n}\e^{-x^2}.
\]
One can check that $\HO$ is conjugated to the generator of the Ornstein-Uhlenbeck process \eqref{def-OU-generator} 
acting on $L^2\Big(\sqrt{\frac{d'(\xf)-b'(\xf)}{2\pi b(\xf)}}\,\e^{-\frac{(d'(\xf)-b'(\xf)}{2b(\xf)}x^2}\dd x\Big)$ in the following way:
$\frac{1}{\psi_0}\HO(\psi_0 f)=\OU f$.
\end{remark}

In Proposition \ref{convM0} we prove that the operator $\LKl$ converges weakly, when $K$ tends to infinity, to the operator $\PO$ defined for $v\in c_{00}$ by 
\begin{align}
\label{le-M0}
& \big(\PO v\big)(n)=\\
\nonumber
& \sqrt{\tlamz\,\tmuz\,n\,(n+1)} \,v(n+1)+\sqrt{\tlamz\,\tmuz\,n\,(n-1)}\,v(n-1)\,\un_{\{n>1\}}\\
\nonumber
& -n\,(\tlamz+\tmuz) v(n)).
\end{align}
Here again we denote the operator on $c_{00}$ and its closure by the same letter.

\begin{proposition}\label{convM0}
Let $u\in c_{00}$. Then
\[
\lim_{K\to\infty}\LKl u=\PO u
\]
where $\PO$ is defined in \eqref{le-M0}.
\end{proposition}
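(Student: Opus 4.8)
The plan is to compute $\LKl u$ explicitly for a fixed $u\in c_{00}$ and show that it converges, term by term, to $\PO u$ in $\ldeux$. Since $u$ has finite support, say $u(n)=0$ for $n>N$, the sequence $\LKl u$ is supported on $\{1,\dots,N+1\}$ for every $K$ (the operator is tridiagonal), so convergence in $\ldeux$ reduces to convergence of finitely many scalar coefficients. Thus the only thing to check is that for each fixed $n$,
\[
\sqrt{\lambdaK_{n}\,\muK_{n+1}}\longrightarrow \sqrt{\tlamz\,\tmuz\,n\,(n+1)},\quad
\sqrt{\lambdaK_{n-1}\,\muK_{n}}\longrightarrow\sqrt{\tlamz\,\tmuz\,n\,(n-1)},\quad
\lambdaK_{n}+\muK_{n}\longrightarrow n(\tlamz+\tmuz),
\]
as $K\to\infty$.

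First I would recall the Taylor expansion near $0$ already recorded in the heuristics: since $b(0)=d(0)=0$ and $x\mapsto b(x)/x$, $x\mapsto d(x)/x$ are differentiable at $0$ with values $b'(0)=\tlamz$ and $d'(0)=\tmuz$, we have for fixed $m$
\[
\lambdaK_{m}=K\,b\!\left(\frac{m}{K}\right)=m\,\frac{b(m/K)}{m/K}=m\big(\tlamz+\mathcal{O}(1/K)\big),\qquad
\muK_{m}=m\big(\tmuz+\mathcal{O}(1/K)\big),
\]
where the implied constants depend on $m$ but $m$ ranges over the fixed finite set $\{0,1,\dots,N+1\}$. Substituting these into the three displayed coefficients and using continuity of the square root gives exactly the coefficients of $\PO u$ in the limit; the error on each coefficient is $\mathcal{O}(1/K)$. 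Summing the (at most $N+2$) squared errors yields $\|\LKl u-\PO u\|_{\ldeux}=\mathcal{O}(1/K)$, which is in fact the quantitative statement $\|\LKl v-\OUP v\|_{\ldeux}\le \Oun/K$ anticipated in the introduction (with $\OUP=\PO$ under the conjugation by $(\Pi^{\psK})^{1/2}$).

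There is essentially no serious obstacle here: the statement is a soft pointwise-on-$c_{00}$ limit, and the main point is simply the bookkeeping that the tridiagonal structure keeps the supports uniformly bounded so that $\ldeux$-convergence is equivalent to convergence of individually. The one thing worth a sentence of care is the indicator $\un_{\{n>1\}}$ in the definition of both $\LKl$ and $\PO$: for $n=1$ the subdiagonal term is absent on both sides, so the $n=1$ coefficient converges trivially, and for $n\ge 2$ the indicators are both $1$ and play no role. Hence the proof is a direct substitution-and-estimate argument, with no compactness or functional-analytic input needed at this stage.
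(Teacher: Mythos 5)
Your proof is correct and follows essentially the same route as the paper: the paper's own argument is precisely that for each fixed $n$ one has $\lambdaK_n\to \tlamz\,n$ and $\muK_n\to \tmuz\,n$, which, combined with the finite support of $u$ and the tridiagonal structure, reduces the $\ldeux$-limit to finitely many scalar limits. Your version merely adds the explicit $\mathcal{O}(1/K)$ rate and the bookkeeping details, which the paper leaves implicit.
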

\begin{proof}
Follows from the fact that for each fixed $n$
\[
\lim_{K\to\infty}\lambdaK_{n}=\tlamz\,n
\qquad
\mathrm{and} \qquad
\lim_{K\to\infty}\muK_{n}=\tmuz\,n\;.
\]
\end{proof}

%%%%%%%%%%%%%%%%%%%%%%%%%%%% SECTION %%%%%%%%%%%%%%%%%%%%%%%%%%%%%%%%%%%%%%%%%%

\subsection{Steps of the proof of Theorem \ref{main-theorem}}

The proof of Theorem \ref{main-theorem} relies on the following two theorems whose proofs are postponed to Section \ref{proof-maintheorem}.
Recall that for any fixed $K$, the spectrum $\spectre(\mathcal{L}_{\sK})$ is discrete, and let
\[
G=\bigcup_{j=0}^\infty\big(\rhoK{j}\big)^{\mathrm{{\scriptscriptstyle acc}}}
\]
where $\big(\rhoK{j}\big)^{\mathrm{{\scriptscriptstyle acc}}}$ is the set of accumulation points of $\big(\rhoK{j}\big)$ when $K\to+\infty$.  

\begin{theorem}\label{liminf}
We have
\[
S_1\cup S_2 \subset G
\]
where $S_1$ and $S_2$ are defined in \eqref{def-S1} and \eqref{def-S2}.
\end{theorem}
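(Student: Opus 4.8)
The plan is to prove that every element of $S_1\cup S_2$ is an accumulation point of the eigenvalue sequences $(\rhoK{j})_K$ by constructing, for each target value $s$, a family of \emph{quasi-eigenvectors} of $\LKl$ (equivalently of $\LKL$ on $\HK_{\sK}$) whose Rayleigh-type defect vanishes as $K\to\infty$, and then invoking a standard spectral-approximation argument for self-adjoint operators with compact resolvent: if $w^{\psK}$ is a unit vector with $\|(\LKl + s)w^{\psK}\|\to 0$, then $\spectre(\LKl)$ has a point within $o(1)$ of $-s$. Since the spectrum of $\LKl$ is exactly $\{-\rhoK{j}\}$ and is discrete, this forces some $\rhoK{j}$ to be within $o(1)$ of $s$, hence (after extracting subsequences and using that the $\rhoK{j}$ stay in a bounded region near any fixed $s$, cf. the a priori bounds from \cite{CCM1}) $s\in G$.

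First I would handle $S_2=(b'(0)-d'(0))\integers$, which is the easier half. Fix $k\ge 1$ and let $w^{(k)}\in c_{00}$ be the eigenvector of $\PO$ associated with the eigenvalue $-k(b'(0)-d'(0))$ (the spectrum of $\PO$ being $-S_2$, as asserted in the excerpt and to be established among the auxiliary results); note $w^{(k)}$ has finite support, so it lies in $c_{00}$. By Proposition \ref{convM0}, $\LKl w^{(k)}\to \PO w^{(k)}=-k(b'(0)-d'(0))\,w^{(k)}$ in $\ldeux$, so $w^{(k)}$ (normalized) is a quasi-eigenvector of $\LKl$ for the eigenvalue $-k(b'(0)-d'(0))$, giving that point in $G$. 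This covers all of $S_2$.

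For $S_1=(d'(\xf)-b'(\xf))\entiers$ I would work in $\Ldeux$ with $\LKL$. Fix $n\ge 0$ and take $f=\psi_n$, the $n$-th Hermite eigenfunction of $\HO$ from \eqref{eigenstates-HO}, which satisfies the Gaussian-times-polynomial decay hypothesis of Proposition \ref{convS} (all derivatives up to order three are $O(\e^{-a|x|})$ for some $a>0$). Then $\LKL \psi_n \to \HO\psi_n = -(d'(\xf)-b'(\xf))\,n\,\psi_n$ in $\Ldeux$ by Proposition \ref{convS}. The subtlety is that $\LKL$ lives on $\Ldeux$ but is the conjugate of $\LKl$ only on the subspace $\HK_{\sK}$, so I would instead use $P_{\sK}\psi_n\in\ldeux$ as the quasi-eigenvector: by Lemma \ref{isomQP}(ii), $\|P_{\sK}\psi_n\|_{\ldeux}\to\|\psi_n\|_{\Ldeux}=1$, and since $Q_{\sK}$ is an isometry onto $\HK_{\sK}$ (Lemma \ref{isometry}) with $Q_{\sK}P_{\sK}\psi_n\to\psi_n$ (Lemma \ref{isomQP}(i)), one gets
\[
\big\|\LKl P_{\sK}\psi_n + (d'(\xf)-b'(\xf))\,n\,P_{\sK}\psi_n\big\|_{\ldeux}
=\big\|\LKL Q_{\sK}P_{\sK}\psi_n + (d'(\xf)-b'(\xf))\,n\,Q_{\sK}P_{\sK}\psi_n\big\|_{\Ldeux}
\le \big\|\LKL\psi_n-\HO\psi_n\big\|_{\Ldeux}+\text{(correction terms)},
\]
and all the correction terms go to $0$ by the two lemmas together with boundedness of the operators on the relevant truncations. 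Normalizing gives a genuine quasi-eigenvector of $\LKl$ at $-(d'(\xf)-b'(\xf))\,n$, so that value lies in $G$.

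The main obstacle I anticipate is the last estimate: controlling $\|\LKL(\psi_n - Q_{\sK}P_{\sK}\psi_n)\|_{\Ldeux}$. Lemma \ref{isomQP}(i) gives smallness of $\psi_n - Q_{\sK}P_{\sK}\psi_n$ in $\Ldeux$, but $\LKL$ is unbounded, so one cannot simply apply the operator. The fix is to exploit the explicit structure: on $|x|\le(\log K)^2$ the difference $\psi_n-Q_{\sK}P_{\sK}\psi_n$ is $O(K^{-1/2})\e^{-a|x|}$ \emph{pointwise}, and $\LKL$ acting on a piecewise-constant function there only couples nearest neighbours with coefficients of size $O(K)$ (from $\lambdaK_n,\muK_n$) but with massive cancellation (the operator is a discrete Laplacian-type difference, so it really behaves like $K\cdot(\text{second difference})$, i.e. like a second derivative at scale $1/\sqrt K$) — this is exactly the mechanism already exploited in the proof of Proposition \ref{convS}. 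One therefore re-runs the Taylor-expansion bookkeeping of that proof on the \emph{difference} rather than on $f$ itself, while on $|x|>(\log K)^2$ one uses assumption \eqref{hipopo} and the Gaussian decay of $\psi_n$ to kill everything. This is routine but must be done carefully; once in hand, the spectral-approximation lemma closes the argument and yields $S_1\cup S_2\subset G$.
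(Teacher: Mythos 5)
Your overall strategy (quasi-eigenvectors plus the spectral-approximation Proposition \ref{yaspectre}) is exactly the paper's, but the $S_2$ half of your argument rests on a false statement. The eigenvectors of $\PO$ are \emph{not} finitely supported: by Theorem \ref{specdeux} the eigenvector for the eigenvalue $-m\,(\tlamz-\tmuz)$ is $v_m(n)=\sqrt{n}\,(\tmuz/\tlamz)^{n/2}P_m(n)$, and more basically a tridiagonal operator with strictly positive off-diagonal entries admits no eigenvector in $c_{00}$ (the eigenvalue equation at the first site beyond the support forces the last nonzero entry to vanish). Since Proposition \ref{convM0} only gives $\LKl u\to\PO u$ for a \emph{fixed} $u\in c_{00}$, you cannot apply it to $v_m$, and your $S_2$ argument collapses at that point. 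This is precisely where the paper has to work: in Proposition \ref{prop-quasi-S2} it takes the genuine (exponentially decaying) eigenvector of $\PO$, estimates $\|\LKl v_\rho+\rho v_\rho\|_{\ldeux}$ separately on $\{n\le\lfloor\log K\rfloor\}$ (where the rates are within $\mathcal{O}(\log K/K)$ of $\tlamz n$, $\tmuz n$) and on the tail, where the exponential decay $(\tmuz/\tlamz)^{n/4}$ must beat the growth of $\lambdaK_n+\muK_n$, and this is exactly where assumption \eqref{hipopo} is used. Your proposal could be repaired (truncate $v_m$ at a level $N$, note the truncation is an $\mathcal{O}(q^N)$-quasi-eigenvector of $\PO$, apply Proposition \ref{convM0} to each fixed truncation, and run a two-epsilon argument), but as written the key step is missing, so this is a genuine gap.

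On the $S_1$ half you are essentially correct, and in fact the ``main obstacle'' you anticipate does not exist: since $\LKL=Q_{\sK}\LKl P_{\sK}$ and $P_{\sK}Q_{\sK}=\mathrm{Id}$ on $\ldeux$, one has $\LKL Q_{\sK}P_{\sK}\psi_n=\LKL\psi_n$, so by the isometry of $Q_{\sK}$,
\[
\big\|\LKl P_{\sK}\psi_n+\rho\,P_{\sK}\psi_n\big\|_{\ldeux}
=\big\|\LKL\psi_n+\rho\,Q_{\sK}P_{\sK}\psi_n\big\|_{\Ldeux}
\le\big\|\LKL\psi_n-\HO\psi_n\big\|_{\Ldeux}+\rho\,\big\|\psi_n-Q_{\sK}P_{\sK}\psi_n\big\|_{\Ldeux},
\]
and both terms vanish by Proposition \ref{convS} and Lemma \ref{isomQP}; the unbounded operator is never applied to the non-smooth difference $\psi_n-Q_{\sK}P_{\sK}\psi_n$ (re-running the Taylor bookkeeping on that difference, as you suggest, would not even make sense since it is not $C^3$). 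This is exactly how the paper's Proposition \ref{prop-quasi-S1} proceeds. Finally, note that both your closing step and the paper's deduce $s\in G$ from the mere presence of spectrum near $-s$ for all large $K$ rather loosely; if you spell it out, you need an argument preventing the index $j(K)$ of the nearby eigenvalue from drifting to infinity (this is where the a priori control of the low-lying eigenvalues, as in Corollary \ref{cfini} and the recursive scheme of the main proof, enters).
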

This theorem is proved in Section \ref{proof-liminf}.

\begin{corollary}\label{cfini}
For every fixed $j$ we have
\[
\limsup_{\sK\to+\infty} \rho_j^{\psK}<+\infty.
\]
\end{corollary}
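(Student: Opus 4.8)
The statement is a soft consequence of the weak convergence results already in hand, obtained through the Courant--Fischer min-max principle. Recall that $\LKl$ is self-adjoint, bounded above, with purely discrete spectrum $\{-\rho_j^{\psK}\}_{j\ge 0}$ and $-\rho_0^{\psK}>-\rho_1^{\psK}>\cdots$; hence $-\rho_j^{\psK}$ is the $(j+1)$-th largest eigenvalue and
\[
-\rho_j^{\psK}=\max_{\substack{V\subset\Dom(\LKl)\\ \dim V=j+1}}\ \min_{\substack{v\in V\\ v\neq 0}}\frac{\langle\LKl v,v\rangle_{\ldeux}}{\|v\|_{\ldeux}^2}\,.
\]
So it suffices to produce, for all large $K$, a single $(j+1)$-dimensional trial subspace $V_{\sK}\subset\Dom(\LKl)$ on which the Rayleigh quotient of $\LKl$ stays bounded below uniformly in $K$. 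I would take $V_{\sK}=P_{\sK}\,\mathrm{span}\{\psi_0,\dots,\psi_j\}$, where $\psi_0,\dots,\psi_j$ are the first $j+1$ harmonic-oscillator eigenfunctions from Remark \ref{rem:OUHO}, with $\HO$-eigenvalues $0,-(d'(\xf)-b'(\xf)),\dots,-j(d'(\xf)-b'(\xf))$; they are smooth with Gaussian decay of all derivatives, hence meet the hypotheses of Proposition \ref{convS} and Lemma \ref{isomQP}.

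The computation to carry out is the transfer of the Rayleigh quotient from $\ldeux$ to $\Ldeux$. For $f\in\mathrm{span}\{\psi_0,\dots,\psi_j\}$, since $Q_{\sK}$ is an isometry of $\ldeux$ onto $\HK_{\sK}$ (Lemma \ref{isometry}) and $\LKL=Q_{\sK}\LKl P_{\sK}$, one has $\langle\LKl P_{\sK}f,P_{\sK}f\rangle_{\ldeux}=\langle\LKL f,Q_{\sK}P_{\sK}f\rangle_{\Ldeux}$ and $\|P_{\sK}f\|_{\ldeux}=\|Q_{\sK}P_{\sK}f\|_{\Ldeux}$. Proposition \ref{convS} gives $\LKL f\to\HO f$ in $\Ldeux$, while Lemma \ref{isomQP} gives $Q_{\sK}P_{\sK}f\to f$ in $\Ldeux$ and $\|P_{\sK}f\|_{\ldeux}\to\|f\|_{\Ldeux}$; combining these yields $\langle\LKl P_{\sK}f,P_{\sK}f\rangle_{\ldeux}\to\langle\HO f,f\rangle_{\Ldeux}$ and $\|P_{\sK}f\|_{\ldeux}^2\to\|f\|_{\Ldeux}^2$, uniformly as $f$ runs over the unit sphere of $\mathrm{span}\{\psi_i\}$ (everything is polynomial in the finitely many coordinates of $f$). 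In particular the Gram matrix of $(P_{\sK}\psi_i)_{i\le j}$ tends to the identity, so $\dim V_{\sK}=j+1$ for $K$ large; and any unit $v\in V_{\sK}$ equals $P_{\sK}f/\|P_{\sK}f\|_{\ldeux}$ for some unit $f\in\mathrm{span}\{\psi_i\}$, whence $\langle\LKl v,v\rangle_{\ldeux}=\langle\LKl P_{\sK}f,P_{\sK}f\rangle_{\ldeux}/\|P_{\sK}f\|_{\ldeux}^2=\langle\HO f,f\rangle_{\Ldeux}+o(1)\ge -j(d'(\xf)-b'(\xf))+o(1)$, using $\langle\HO f,f\rangle_{\Ldeux}\ge -j(d'(\xf)-b'(\xf))$ on this span and $d'(\xf)-b'(\xf)>0$. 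Feeding $V_{\sK}$ into the min-max formula gives $-\rho_j^{\psK}\ge -j(d'(\xf)-b'(\xf))+o(1)$, i.e. $\limsup_{\sK\to\infty}\rho_j^{\psK}\le j(d'(\xf)-b'(\xf))<\infty$.

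The argument is not hard, and the genuine content has already been extracted in Propositions \ref{convS}--\ref{convM0} and Lemma \ref{isomQP}; the only point deserving a word is the $\ldeux\leftrightarrow\Ldeux$ transfer above (the sole place where $P_{\sK},Q_{\sK}$ intervene) together with the requirement that the trial vectors lie in $\Dom(\LKl)$. The latter holds because $P_{\sK}\psi_i$ decays super-exponentially in $n$, which dominates the growth of $\lambdaK_n,\muK_n$ under assumption \eqref{hipopo}; alternatively one replaces $\psi_i$ by $\chi\psi_i$ with $\chi$ a smooth plateau on a large ball, so that $P_{\sK}(\chi\psi_i)\in c_{00}$ while the decay hypotheses and the value of $\langle\HO(\chi\psi_i),\chi\psi_i\rangle$ are essentially unchanged. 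Finally I would remark that Corollary \ref{cfini} is in fact already contained in the proof of Theorem \ref{liminf}: the quasi-eigenvector construction there supplies, for each $m$, an asymptotically orthonormal $(m+1)$-tuple of approximate eigenvectors of $\LKl$ whose eigenparameters converge to the $m+1$ smallest elements of $S_1\cup S_2$, and the same min-max inequality then bounds $\rho_m^{\psK}$; the self-contained proof above is simply the special case retaining only the $S_1$-part of that family, which already suffices since $S_1$ is infinite.
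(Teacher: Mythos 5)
Your argument is correct, but it is not the route the paper takes. The paper proves Corollary \ref{cfini} as a soft corollary of Theorem \ref{liminf}: arguing by contradiction, it takes the smallest index $j_c$ whose eigenvalue blows up along some diverging sequence $(K_p)$, notes that the indices below $j_c$ stay bounded by some $\alpha$ while those above diverge, and then any $\rho\in S_1\cup S_2$ with $\rho>\alpha$ (such $\rho$ exist because $S_1\cup S_2$ is unbounded) could not be an accumulation point of the spectrum, contradicting $S_1\cup S_2\subset G$; this is two lines, but it leans entirely on Theorem \ref{liminf} and gives no quantitative information. You instead give a direct variational proof via the max--min (Courant--Fischer) principle with the explicit trial space $P_{\sK}\,\mathrm{span}\{\psi_0,\dots,\psi_j\}$, transferring the Rayleigh quotient from $\ldeux$ to $\Ldeux$ through Lemmas \ref{isometry} and \ref{isomQP} and Proposition \ref{convS}; this is independent of Theorem \ref{liminf} and yields the sharper, quantitative bound $\limsup_{\sK\to\infty}\rho_j^{\psK}\le j\big(d'(\xf)-b'(\xf)\big)$, consistent with (and an upper bound for) the limit $\eta_j$ of Theorem \ref{main-theorem}. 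The points you rightly flag are exactly the ones that need a word: that $P_{\sK}\psi_i\in\Dom(\LKl)$ (the same fact the paper uses without comment in Proposition \ref{prop-quasi-S1}, justified by the Gaussian decay of $P_{\sK}\psi_i$ against the at most $K\e^{\epsilon n/K}$ growth of the rates under \eqref{hipopo}, or by your cutoff variant landing in $c_{00}$); that only the easy direction of max--min is used (any $(j+1)$-dimensional subspace of the domain gives a lower bound for $-\rho_j^{\psK}$ by orthogonality to the first $j$ eigenvectors and the spectral theorem); and that uniformity over the unit sphere of the trial span follows from linearity in finitely many coordinates, so the Gram matrix tends to the identity and the quotient converges uniformly. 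Your closing remark that the corollary could also be extracted from the quasi-eigenvector family of Theorem \ref{liminf} is essentially right but would need control of the cross terms between approximate eigenvectors; as written it is an aside and not needed, since your self-contained min-max argument already does the job.
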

\begin{proof}
We proceed by contradiction.
Assume that there exists $j_0$ such that
\[
\limsup_{\sK\to+\infty} \rho_{j_0}^{\psK}=+\infty.
\]
Let $j_{c}=\min\{0<\ell\leq j_0:\limsup_{K\to+\infty} \rho_{\ell}^{\psK}=+\infty\}$.
Hence there exists $\alpha<+\infty$ such that $\limsup_{K\to+\infty} \rho_{j_c-1}^{\psK}=\alpha$.
By definition of $j_c$, there exists a diverging sequence $(K_p)_p$ such that $\lim_{p\to+\infty} \rho_{j_{c}}^{\psKp}=+\infty$.
Let $\rho\in S_1\cup S_2$ such that $\rho>\alpha$. 

If $j\leq j_c-1$, we have $\limsup_{p\to+\infty} \rho_{j}^{\psKp}\leq \limsup_{p\to+\infty} \rho_{j_c-1}^{\psKp}\leq \alpha<\rho$.
For all $j\geq j_c$, we have $\liminf_{p\to+\infty} \rho_{j}^{\psKp}\geq \liminf_{p\to+\infty} \rho_{j_c}^{\psKp}=+\infty$.
This implies $\rho\notin G$, contradicting Theorem \ref{liminf}.
\end{proof}

\begin{theorem}\label{prsquelafin}
We have
\[
S_1\cup S_2\supset G.
\]
Moreover, for each $j\in\entiers$, let $(K_{p})_p$ be a diverging sequence such that 
\[
\lim_{p\to\infty}\rhoKp{j}{p}=\rho_{*}
\]
where $\rho_*$ is finite by Corollary \ref{cfini}.
Then
\begin{enumerate}[1)]
\item
If $\rho_{*}\in S_{1}\Delta S_{2}$ then
\begin{equation}\label{difsym}
\liminf_{p\to\infty}\; \min\left\{\bigg|\rhoKp{j+1}{p}-\rho_{*}\bigg|,\; \bigg|\rhoKp{j-1}{p}-\rho_{*}\bigg|\right\}>0\;.
\end{equation}
Moreover there are only two cases:
\begin{enumerate}
\item
If $\rho_*\in S_1$ then there exists a diverging sequence of integers $(p_\ell)$ such that $Q_{{\scriptscriptstyle K_{p_\ell}}} \phi^{{\scriptscriptstyle(K_{p_{\ell}}})}_j \xrightarrow[]{\Ldeux} \varphi_*$,
where $\rho_*$ and $\varphi_*$ are such that $\HO \varphi_*= - \rho_* \varphi_*$.
\item
If $\rho_*\in S_2$ then $\phi^{{\scriptscriptstyle(K_{p})}}_j\xrightarrow[]{\ldeux} \phi_*$, where $\rho_*$ and $\phi_*$ are such that $\PO \phi_*= - \rho_* \phi_*$.
\end{enumerate}
\item If $\rho_{*}\in S_{1}\cap S_{2}$ then we have the following two assertions:
\begin{enumerate}
\item
There exists a diverging sequence of integers $(p_{\ell})$ such that:
\[
\text{either}\quad \lim_{\ell\to\infty}\rhoKp{j+1}{p_{\ell}}=\rho_{*}\quad\text{or}\quad\lim_{\ell\to\infty}\rhoKp{j-1}{p_{\ell}}=\rho_{*}.
\]
\item
We have
\begin{equation}\label{madmin}
\liminf_{p\to\infty}\; \min\left\{\bigg|\rhoKp{j+1}{p}-\rho_{*}\bigg|,\; \bigg|\rhoKp{j-1}{p}-\rho_{*}\bigg|\right\}=0
\end{equation}
and
\begin{equation}\label{madmax}
\liminf_{p\to\infty}\; \max\left\{\bigg|\rhoKp{j+1}{p}-\rho_{*}\bigg|,\; \bigg|\rhoKp{j-1}{p}-\rho_{*}\bigg|\right\}>0\;.
\end{equation}
\end{enumerate}
\end{enumerate}
\end{theorem}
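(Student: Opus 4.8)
The plan is to prove Theorem \ref{prsquelafin} by combining the two propositions announced right after: Proposition \ref{strucvp}, which says that every eigenvector $\phi^{\psK}_j$ splits, up to a small error, as a sum of a piece $\phi^{\psK,\mathrm{left}}_j$ concentrated near the origin and a piece $\phi^{\psK,\mathrm{right}}_j$ concentrated near $\lfloor K\xf\rfloor$; and Proposition \ref{decoupe}, which extracts, by compactness, limits of each of these pieces along subsequences. Fix $j$ and a diverging sequence $(K_p)$ along which $\rhoKp{j}{p}\to\rho_*$ (finite by Corollary \ref{cfini}). First I would feed this sequence into the splitting of Proposition \ref{strucvp} and look at the two natural quantities: the $\ldeux$-mass $\|\phi^{K_p,\mathrm{left}}_j\|_{\ldeux}^2$ of the left piece, and the $\Ldeux$-mass $\|Q_{K_p}\phi^{K_p,\mathrm{right}}_j\|_{\Ldeux}^2$ of the right piece. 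Since $\phi^{\psK}_j$ has norm one and the two pieces are asymptotically orthogonal (they live on essentially disjoint index ranges), these two masses sum to $1+o(1)$, so along a further subsequence each converges to a limit, and at least one of the two limits is positive.

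Next I would analyze each non-degenerate case. Suppose along a subsequence $(p_\ell)$ the left mass stays bounded below. Proposition \ref{decoupe}, together with the $\ldeux$-compactness of the truncated left pieces (uniform decay in $n$ coming from the exponential control on $\phi^{\psK}_j$ near the origin that one gets from the recurrence), yields an $\ldeux$-limit $\phi_*\neq 0$; passing to the limit in the eigenvalue equation for $\LKl$ using Proposition \ref{convM0} gives $\PO\phi_* = -\rho_*\phi_*$, hence $\rho_*\in\spectre(-\PO)=S_2$. Symmetrically, if the right mass stays bounded below along a subsequence, then the embedded pieces $Q_{K_p}\phi^{K_p,\mathrm{right}}_j$ are $\Ldeux$-precompact (Fréchet–Kolmogorov–Riesz, using the a priori Gaussian-type decay at scale $\sqrt K$ and an equicontinuity estimate), so a subsequence converges to some $\varphi_*\neq 0$; passing to the limit using Proposition \ref{convS} gives $\HO\varphi_*=-\rho_*\varphi_*$, hence $\rho_*\in\spectre(-\HO)=S_1$. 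In all cases $\rho_*\in S_1\cup S_2$, which already proves $G\subset S_1\cup S_2$. The dichotomy statements in part 1) then follow: if $\rho_*\in S_1\setminus S_2$, the right mass must carry all the weight (else we would also get $\rho_*\in S_2$), so $Q_{K_{p_\ell}}\phi^{K_{p_\ell}}_j\to\varphi_*$ in $\Ldeux$ along a subsequence with $\HO\varphi_*=-\rho_*\varphi_*$; and dually if $\rho_*\in S_2\setminus S_1$, where one can moreover upgrade subsequential convergence to convergence of the whole sequence $\phi^{K_p}_j\to\phi_*$ because the limit eigenvalue $\rho_*$ of $\PO$ is simple, so the limit is unique up to sign, which is fixed by the normalization convention on $\phi^{\psK}_j$.

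For the separation statements, the idea is a dimension-counting / multiplicity argument. Each limiting operator $\HO$ and $\PO$ has only \emph{simple} eigenvalues. Suppose $\rho_*\in S_1\Delta S_2$ and, for contradiction, that $\rhoKp{j\pm1}{p}$ also approaches $\rho_*$ along a subsequence; then running the above compactness machinery on indices $j-1,j,j+1$ simultaneously produces (at least) two linearly independent eigenfunctions of whichever of $\HO,\PO$ corresponds to $\rho_*$, at eigenvalue $-\rho_*$ — here linear independence is preserved in the limit because the $\phi^{K_p}_\cdot$ are orthonormal and the relevant Gram matrix passes to the limit as the identity, thanks to the $\Ldeux$- or $\ldeux$-convergence of the dominant pieces. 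That contradicts simplicity, proving \eqref{difsym}. When $\rho_*\in S_1\cap S_2$, the same argument shows the limiting multiplicity is \emph{exactly two} (one eigenfunction from the OU/harmonic-oscillator side, one from the branching side), so precisely one of the two neighbours $j-1$, $j+1$ can converge to $\rho_*$ while the other must stay away: this gives simultaneously \eqref{madmin} (some neighbour does converge, by Theorem \ref{liminf} which forces $\rho_*$ to be an accumulation point of at least two consecutive $\rho^{\psK}_\cdot$) and \eqref{madmax} (they cannot both converge), as well as assertion 2)(a).

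The main obstacle will be the $\Ldeux$-compactness of the right pieces and the accompanying limit passage in Proposition \ref{convS}: one must control the eigenvectors $\phi^{\psK}_j$ \emph{uniformly in $K$} on the intermediate region between scale $\sqrt K$ around $K\xf$ and the $O(1)$ region near the origin, show that the right piece genuinely decays at the Gaussian scale (no escaping mass), and verify the Fréchet–Kolmogorov–Riesz hypotheses (tightness plus $\Ldeux$-equicontinuity of translates) for the embedded step functions $Q_{K_p}\phi^{K_p,\mathrm{right}}_j$, whose roughness at scale $1/\sqrt K$ must be shown to wash out. This a priori decay and regularity of eigenvectors is exactly what Proposition \ref{strucvp} is designed to provide, so the real work is hidden there; given it, the present theorem is an exercise in extracting limits and counting multiplicities against the known simplicity of the spectra of $\HO$ and $\PO$.
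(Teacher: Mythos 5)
Your treatment of $G\subset S_1\cup S_2$, of \eqref{difsym} and of \eqref{madmax} follows essentially the paper's own route: the splitting of Proposition \ref{strucvp}, the limit extraction of Proposition \ref{decoupe}, orthogonality passed to the limit through the isometry $Q_{\sK}$, and counting against the simplicity of the spectra of $\HO$ and $\PO$. The genuine gap is in your proof of \eqref{madmin} and of assertion 2)(a). You deduce them from ``Theorem \ref{liminf}, which forces $\rho_*$ to be an accumulation point of at least two consecutive $\rho^{\psK}_\cdot$''. Theorem \ref{liminf} says no such thing: it only gives $S_1\cup S_2\subset G$ with $G=\bigcup_j\big(\rhoK{j}\big)^{\mathrm{acc}}$, i.e.\ for each $\rho\in S_1\cup S_2$ there is \emph{some} index $j'$ and \emph{some} diverging sequence of $K$'s along which $\rho^{\psK}_{j'}\to\rho$; it gives no control on whether $j'$ is adjacent to your fixed $j$, nor on whether the approach happens along the given sequence $(K_p)$, and membership of $\rho_*$ in both $S_1$ and $S_2$ does not, through Theorem \ref{liminf} alone, translate into an asymptotic multiplicity of at least two along $(K_p)$. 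Your dimension-counting argument bounds the asymptotic multiplicity from above (that is exactly \eqref{madmax}); it cannot bound it from below, which is what \eqref{madmin} and 2)(a) require.

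What is needed, and what the paper does, is a quantitative ``two orthogonal quasi-modes cannot be absorbed by one isolated eigenvalue'' argument. Assuming for contradiction that both neighbours $\rhoKp{j\pm1}{p}$ stay at distance $>\delta$ from $\rho_*$, one uses $\rho_*\in S_1$ to build (Proposition \ref{prop-quasi-S1}) right-localized normalized quasi-eigenvectors $\psi_{\rho_*}^{\psKp}$ with $\big\|\LKlp\psi_{\rho_*}^{\psKp}+\rho_*\psi_{\rho_*}^{\psKp}\big\|_{\ldeux}\to0$, and $\rho_*\in S_2$ to build (Proposition \ref{prop-quasi-S2}) a left-localized quasi-eigenvector $v_{\rho_*}$. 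The gap hypothesis is precisely what allows Proposition \ref{approxvp} to be applied with $A=\LKlp$, forcing the \emph{single} eigenvector $\phi_j^{\psKp}$ to be, up to a phase, close to both $\psi_{\rho_*}^{\psKp}$ and $v_{\rho_*}$; since these two are asymptotically orthogonal (one carried essentially by $\{n\ge n_r(K_p)\}$, the other by $\{n\le n_l(K_p)\}$, via Proposition \ref{strucvp}), this yields the contradiction proving \eqref{madmin}, from which 2)(a) follows. Your proposal never invokes the quasi-eigenvector constructions of Propositions \ref{prop-quasi-S1} and \ref{prop-quasi-S2} or the stability Lemma \ref{approxvp}, so this lower bound on the asymptotic multiplicity is missing, and without it the statement \eqref{madmin} is unproved.
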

Note that \eqref{difsym} means that if $\rho_{*}\in S_{1}\Delta S_{2}$ then $-\rho_{*}$ is a simple asymptotic eigenvalue, and either $-\rho_{*}$
is an eigenvalue of $\HO$ if $\rho_{*}\in  S_{1}$, or of  $\PO$ if $\rho_{*}\in S_{2}$. In addition,  \eqref{madmin} and \eqref{madmax} mean that if
$\rho_{*}\in  S_{1}\cap S_{2}$, then $-\rho_{*}$ is a double asymptotic eigenvalue which is an eigenvalue of both $\HO$ and  $\PO$. 
 
\noindent\textbf{Proof of Theorem \ref{main-theorem}.} The proof is recursive.
For $j=0$ it follows from \cite{CCM1} that $\lim_{\sK\to+\infty} \rho_0^{\psK}=0$.
Let $j\geq 0$ and assume that for $\ell\leq j$ (if any) $\lim_{\sK\to+\infty} \rho_\ell^{\psK}=\eta_\ell$.
We now prove that $\lim_{\sK\to+\infty} \rho_{j+1}^{\psK}=\eta_{j+1}$. There are several cases to consider.
\begin{itemize}
\item
If $\eta_j\in S_1\Delta S_2$, we claim that $\liminf_{\sK\to+\infty}\rho_{j+1}^{\psK}\geq \eta_{j+1}$. Otherwise, by Theorem \ref{prsquelafin} and the recursive hypothesis, 
there would exist $K_p\to+\infty$ such that $\rho_{j+1}^{\psKp}\to \eta_*<\eta_{j+1}$. Since by the recursive hypothesis 
$\lim_{\sK\to+\infty} \rho_j^{\psKp}=\eta_j$, we have $\eta_*\geq \eta_j$. From the first statement of Theorem \ref{prsquelafin} it follows that $\eta_*= \eta_j$.
This contradicts 1) of Theorem \ref{prsquelafin}.\newline
We now claim that $\limsup_{\sK\to+\infty}\rho_{j+1}^{\psK}\leq \eta_{j+1}$. Otherwise, by Theorem \ref{prsquelafin} and the recursive hypothesis, 
there would exist $K_p\to+\infty$ such that $\rho_{j+1}^{\psKp}\to \eta_*>\eta_{j+1}$. This implies that $\eta_{j+1}\not\in G$, contradicting Theorem \ref{liminf}.\newline
Hence, in this case, $\lim_{\sK\to+\infty}\rho_{j+1}^{\psK}= \eta_{j+1}$.
\item
If $\eta_j\in S_1\cap S_2$ (which implies $j>0$), we have two cases:
\begin{itemize}
\item
If $\eta_{j-1}=\eta_j$, then we claim that $\liminf_{\sK\to+\infty}\rho_{j+1}^{\psK}\geq \eta_{j+1}$. Otherwise, by the same argument as before, 
there would exist $K_p\to+\infty$ such that $\rho_{j+1}^{\psKp}\to \eta_{j}$, contradicting \eqref{madmax} in Theorem \ref{prsquelafin}.\newline
We now claim that $\limsup_{\sK\to+\infty}\rho_{j+1}^{\psK}\leq \eta_{j+1}$. Otherwise, as before, this would contradict that $\eta_{j+1}\in G$.
\newline
Hence, in this case, $\lim_{\sK\to+\infty}\rho_{j+1}^{\psK}= \eta_{j+1}$.
\item
If $\eta_{j-1}<\eta_j$, then we obviously have $\liminf_{\sK\to+\infty}\rho_{j+1}^{\psK}\geq \eta_j$. 
If  $\limsup_{\sK\to+\infty}\rho_{j+1}^{\psK}> \eta_j$, then there exists $K_p\to+\infty$ such that $\rho_{j+1}^{\psKp}\to \eta_*>\eta_j$,
contradicting \eqref{madmin} in Theorem \ref{prsquelafin}.\newline
Hence, in this case, $\lim_{\sK\to+\infty}\rho_{j+1}^{\psK}= \eta_{j+1}$.
\end{itemize}
\end{itemize}
Therefore we $\lim_{\sK\to+\infty}\rho_{j+1}^{\psK}= \eta_{j+1}$. As announced, the proof of Theorem \ref{main-theorem} follows recursively.

%%%%%%%%%%%%%%%%%%%%%%%% SECTION
\section{Properties of the eigenvectors}

Our aim in this part is to prove that for $K$ large enough,  the  eigenvectors $\phi^{\psK}_{j}$ of $\LKl$  (see \eqref{def-eigen-LK}) are functions whose representation is 
sketched in the figure.
An eigenvector is `negligible' outside the union of a neighborhood of $1$, and a neighborhood of $K\xf$. It is `non-negligible' in at least one of these neighborhoods.

\begin{figure}[htb!]
\centering
\includegraphics[scale=.75]{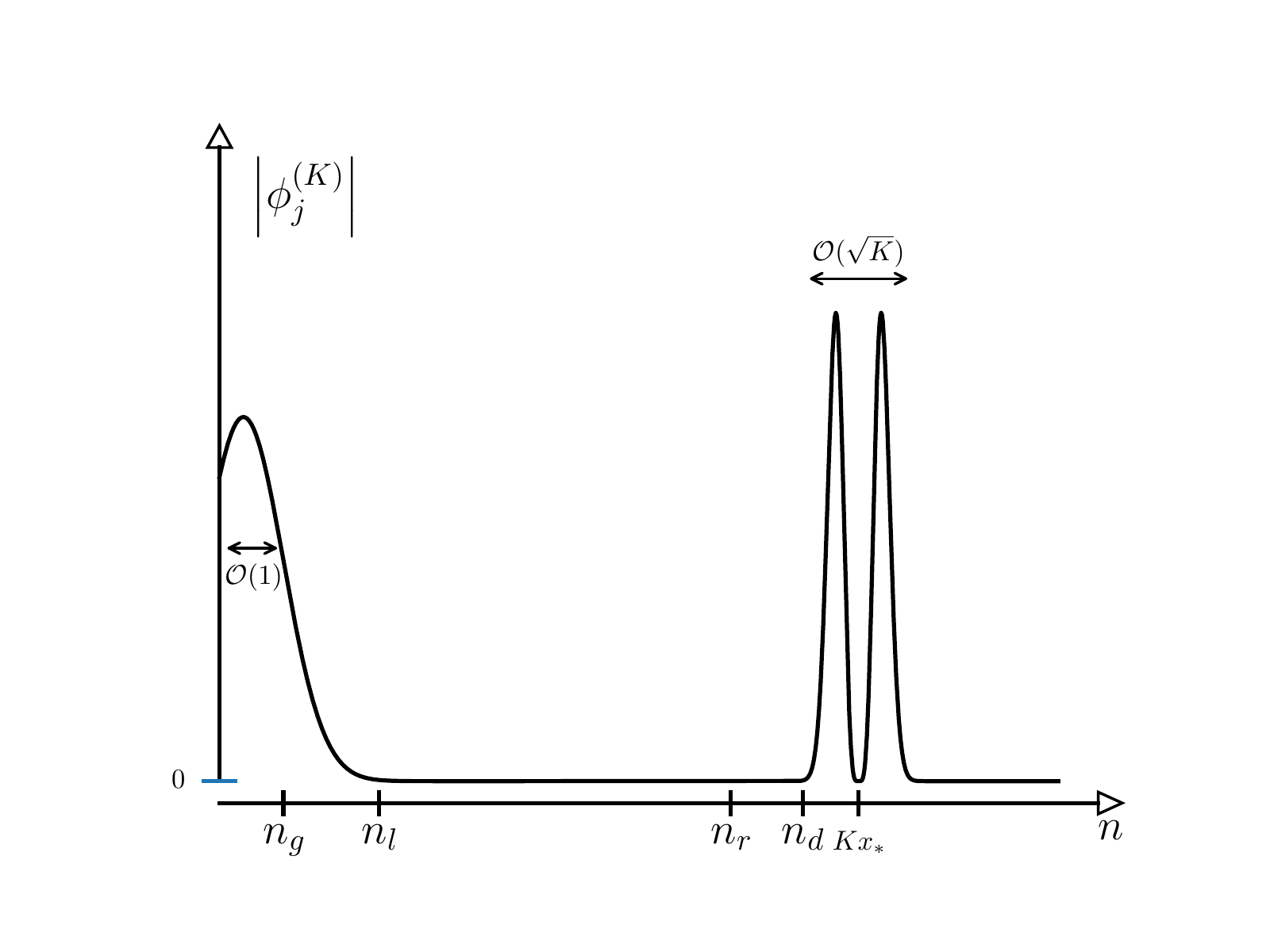}
\legend{{\small Figure: Schematic representation of one of the three possible `shapes' of the eigenvectors $\phi^{\psK}$ (with distortion).}}
%\label{schema} 
%\label{} % a mettre apres caption
\end{figure}

To separate the different behaviors, we introduce a `potential' defined by
\begin{equation}
\label{potential}
\Vn=\lambdaK_{n}+\muK_{n}-\,\sqrt{\lambdaK_{n}\muK_{n+1}}-\,\sqrt{\lambdaK_{n-1}\muK_{n}}\,\un_{\{n>1\}}.
\end{equation}
For $\eta>0$, let $\nge$ and $\nde$ be integers such that $\llbracket\nge,\nde\rrbracket$ is the maximal interval containing $\Kxf/2$ such that
\[
\inf_{n\,\in\, \llbracket\nge,\,\nde\rrbracket}\big(\Vn-\eta\big)>0\;.
\]
Let $\nl=\big\lfloor (\log K)^{2}\big\rfloor$ and $\nr=\big\lfloor\Kxf-K^{\frac{2}{3}}\, \log K\big\rfloor$. It follows from our assumptions that for $K$ large enough
\[
1<\nge<\nl\ll \frac{\Kxf}{2}<\nr<\nde\;.
\]

\begin{proposition}\label{strucvp}
For any $\eta>0$ there exists $a_{\eta}>0$, and $K_{\eta}>0$ such that, if $K>K_{\eta}$ and $\phi$ of norm one in $\ldeux$ satisfies
\[
\LKl\phi=-\rho\,\phi\quad\text{where}\quad\rho<\eta
\]
then
\[
\sup_{\nl\le n\le \nr}\big|\phi(n)\big| \le \e^{-a_{\eta}(\log K)^{2}}\;.
\]
\end{proposition}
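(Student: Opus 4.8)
The plan is to read $\LKl$ as a discrete Schr\"odinger operator and to establish exponential decay of $\phi$ in the ``classically forbidden'' region $\llbracket\nge,\nde\rrbracket$ by a maximum-principle comparison with an explicit supersolution. Set $a_n:=\sqrt{\lambdaK_n\,\muK_{n+1}}$. By the definition \eqref{potential} of the potential, $\lambdaK_n+\muK_n=a_n+a_{n-1}\un_{\{n>1\}}+\Vn$, so that for $v\in c_{00}$
\[
(\LKl v)(n)=(\mathcal{A}v)(n)-\Vn\,v(n),\qquad (\mathcal{A}v)(n):=a_n\big(v(n+1)-v(n)\big)+a_{n-1}\big(v(n-1)-v(n)\big)\un_{\{n>1\}} .
\]
Hence the eigenvalue equation reads $\mathcal{A}\phi=(\Vn-\rho)\,\phi$, and on $\llbracket\nge,\nde\rrbracket$ the multiplier satisfies $\Vn-\rho\ge\Vn-\eta>0$ by the definition of $\nge,\nde$ and the hypothesis $\rho<\eta$. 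Moreover $\|\phi\|_{\ldeux}=1$ forces $|\phi(n)|\le1$ for every $n$, in particular at $n=\nge$ and $n=\nde$.

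First I would pass to $\psi:=|\phi|$. Since the off-diagonal weights $a_n$ of $\mathcal{A}$ are nonnegative, multiplying the eigenvalue equation at $n$ by $\operatorname{sgn}\phi(n)$ and bounding $\operatorname{sgn}(\phi(n))\,\phi(n\pm1)\le\psi(n\pm1)$ yields the subsolution inequality $(\mathcal{A}\psi)(n)\ge(\Vn-\rho)\,\psi(n)\ge0$ on $\llbracket\nge,\nde\rrbracket$. Next I construct a positive supersolution. Writing $v_m:=V_m(K)-\rho>0$ for $\nge\le m\le\nde$, put $\theta_k:=a_k/(a_k+\tfrac12 v_{k+1})\in(0,1)$ and $w_L(n):=\prod_{k=\nge}^{n-1}\theta_k$. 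Using $w_L(n+1)=\theta_n\,w_L(n)$ and $w_L(n-1)=\theta_{n-1}^{-1}\,w_L(n)$, a short computation gives
\[
(\mathcal{A}w_L)(n)=w_L(n)\Big(\tfrac12 v_n-\frac{a_n v_{n+1}/2}{a_n+v_{n+1}/2}\Big)\ \le\ \tfrac12 v_n\,w_L(n)\ \le\ (\Vn-\rho)\,w_L(n),\qquad \nge<n<\nde,
\]
so $w_L$ is a supersolution there; let $w_R$ be the mirror-image product built from $\nde$ downward (factors $a_k/(a_k+\tfrac12 v_k)$), which satisfies the same inequality, and set $w:=w_L+w_R$, a supersolution with $w(\nge),w(\nde)\ge1$.

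The comparison is now a discrete maximum principle: $u:=\psi-w$ satisfies $(\mathcal{A}u)(n)\ge(\Vn-\rho)u(n)$ on $\nge<n<\nde$; if $u$ had a strictly positive maximum over $\llbracket\nge,\nde\rrbracket$ at an interior point $n_0$, then $(\mathcal{A}u)(n_0)\le0$ (both increments are $\le0$) while $(\mathcal{A}u)(n_0)\ge(V_{n_0}(K)-\rho)u(n_0)>0$, a contradiction; so the maximum is attained at an endpoint, where $u=\psi-w\le1-1=0$. Hence $\psi\le w$ on $\llbracket\nge,\nde\rrbracket$. It remains to bound $w$ on $\llbracket\nl,\nr\rrbracket$. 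There $w_L$ is nonincreasing, so $w_L\le w_L(\nl)$; and from the small-$n$ expansions $\lambdaK_n=\tlamz n+\mathcal{O}(n^2/K)$, $\muK_n=\tmuz n+\mathcal{O}(n^2/K)$ one gets $V_m(K)=(\sqrt{\tlamz}-\sqrt{\tmuz})^2 m+\tfrac{\sqrt{\tlamz\tmuz}}{4m}+\mathcal{O}(m^2/K)$ and $a_m\asymp m$, hence $v_{k+1}/a_k\ge\kappa_\eta>0$ for all $\nge\le k<\nl$, so $-\log\theta_k\ge\delta_\eta:=\log(1+\kappa_\eta/2)$ there; since $\nge=\mathcal{O}(1)$, $\nl-\nge\ge\tfrac12(\log K)^2$ for $K$ large, whence $w_L(\nl)\le\e^{-\delta_\eta(\log K)^2/2}$. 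The product $w_R$ decays over the much longer interval $\llbracket\nr,\nde\rrbracket$, of length $\asymp K^{2/3}\log K$, on which $V_m(K)\asymp(\Kxf-m)^2/K$, so $w_R\le\e^{-c(\log K)^3}$ there and is negligible. Choosing $a_\eta:=\delta_\eta/3$ yields $\sup_{\nl\le n\le\nr}|\phi(n)|\le\sup_{\nl\le n\le\nr}w\le\e^{-a_\eta(\log K)^2}$ for $K>K_\eta$.

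I expect the main difficulty to be the uniform lower bound $v_{k+1}/a_k\ge\kappa_\eta$ across the \emph{whole} window $\llbracket\nge,\nl\rrbracket$: at $k=\nge$ the quantity $v_{\nge}=V_{\nge}(K)-\rho$ can be as small as $V_{\nge}(K)-\eta$, so one must use that once inside the region the potential grows, $V_m(K)-\eta$ — and not merely $V_m(K)$ — being $\gtrsim a_m$ for $\nge<m\le\nl$; this follows from the expansion above together with $V_{\nge-1}(K)\le\eta$ (maximality of $\llbracket\nge,\nde\rrbracket$), but needs some care when $\eta$ is small. The quantitative behaviour of $\Vn$ near the origin and near $\Kxf$, as well as the chain $1<\nge<\nl\ll\Kxf/2<\nr<\nde$, should be available from \cite{CCM1} or from the auxiliary results of Section \ref{auxiliaires}; the remaining steps are elementary.
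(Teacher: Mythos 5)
Your argument is correct, and it takes a genuinely different route from the paper. The paper's proof stays at the level of the eigenvector itself: using the local maximum--minimum principle (Propositions \ref{principe} and \ref{locmax}) it reduces to four possible monotonicity patterns of $\phi$ on $\llbracket\nge,\nde\rrbracket$, and then telescopes the one-step ratio bound $v\le \frac{a}{b-c}\,u$ of Proposition \ref{locmax} along the monotone stretches, with exactly the two quantitative inputs you also use: near $n\asymp(\log K)^{2}$ the ratio $\sqrt{\lambdaK_{n-1}\muK_{n}}\big/\big(\lambdaK_{n}+\muK_{n}-\eta-\sqrt{\lambdaK_{n}\muK_{n+1}}\big)$ tends to $\sqrt{\tlamz\tmuz}/(\tlamz+\tmuz-\sqrt{\tlamz\tmuz})<1$, and on $\llbracket\nr,\nde\rrbracket$ the symmetric ratio is $\le\big(1+c(n-\Kxf)^{2}/K^{2}\big)^{-1}$. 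You instead symmetrize first (the discrete Kato inequality for $\psi=|\phi|$), build explicit product barriers $w_L,w_R$ anchored at the endpoints, and conclude by a comparison principle; I checked the supersolution identities ($a_{n-1}(w_L(n-1)-w_L(n))=\tfrac12 v_n w_L(n)$ and its mirror) and they are exact, so the comparison goes through. What your approach buys is the elimination of the case analysis on the shape of $\phi$ and a two-sided bound in one stroke; what the paper's approach buys is that the monotonicity classification it establishes is reused elsewhere (e.g.\ in the proof of Theorem \ref{thm-eig}), and it never needs to construct barriers. The estimates required are the same in both, so neither route is quantitatively cheaper.

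Concerning the difficulty you flag: it is harmless and your own remark already contains the fix, but note that you do not even need a uniform lower bound on $v_{k+1}/a_k$ over the whole window $\llbracket\nge,\nl\rrbracket$. Since every factor $\theta_k$ is $\le 1$, you may simply discard the factors with $k\le m_\eta:=\lceil 2\eta/(\sqrt{\tlamz}-\sqrt{\tmuz})^{2}\rceil$; for $k\ge m_\eta$ the expansion $\Vn=(\sqrt{\tlamz}-\sqrt{\tmuz})^{2}n+\mathcal{O}(1/n)+\mathcal{O}(n^{2}/K)$, valid uniformly for $n\le(\log K)^{2}+1$, gives $v_{k+1}/a_k\ge\kappa_\eta>0$ for $K$ large, and since $m_\eta$ is independent of $K$ the product over $\llbracket m_\eta,\nl\rrbracket$ still contains at least $\tfrac12(\log K)^{2}$ factors bounded by $\e^{-\delta_\eta}$. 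With that observation (and the trivial remark that interior points of $\llbracket\nge,\nde\rrbracket$ satisfy $n\ge 2$, so the indicator $\un_{\{n>1\}}$ never interferes), your proof is complete.
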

\begin{proof} Let us consider an eigenvector $\phi$ of norm one in $\ldeux$ satisfying
$\ 
\LKl\,\phi= - \rho\,\phi$. 

If $\phi({\nge})\neq 0$, define $\tnge=\nge$ and if needed, change the sign of $\phi$ such that $\phi({\tnge})>0$. If $\phi({\nge})=0$, define 
$\tnge=\nge+1$ and if needed, change the sign of $\phi$ such that $\phi({\tnge})>0$ (note that $\phi({\nge})=\phi({\nge+1})=0$ contradicts the normalisation since
$\phi$ solves a second-order recurrence relation). Then, changing the definition of $\nge$  if necessary,  we can assume that $\phi({\nge})> 0$.

Thanks to the local maximum-minimum principle (see Proposition \ref{principe}) we only have four cases.
\begin{enumerate}[1)]
\item
$\phi({\nge+1})\ge \phi({\nge})$ and $\phi({n})$ is increasing on $\llbracket\nge,\nde\rrbracket$.
\item
$\phi({\nge+1})< \phi({\nge})$ and $\phi({n})$ is decreasing and stays nonnegative  on $\llbracket\nge, \,\nde\rrbracket$.
\item $\phi({\nge+1})< \phi({\nge})$ and $\phi({n})$ has a minimum in the interval $\nmin$ in $\llbracket\nge-1, \,\nde-1\rrbracket$ and $\phi({\nmin})\ge0$. Note that 
$\phi({n})$ is  decreasing on $\llbracket\nge, \,\nmin\rrbracket$ and  increasing on $\llbracket\nmin, \,\nde\rrbracket$.
\item
$\phi({n})$ is decreasing on $\llbracket\nge, \,\nde\rrbracket$ and $\phi({\nde})<0$.
\end{enumerate}
We first observe that since $\tlamz>\tmuz$ we have
\begin{align*}
\MoveEqLeft[10] \lim_{K\to\infty}\sup_{n\,\in \big\llbracket \frac{\nl}{2},\,\nl\big\rrbracket}\frac{\sqrt{\lambdaK_{n-1}\,\muK_{n}}}{\lambdaK_{n}+\muK_{n}-\eta-\sqrt{\lambdaK_{n}\muK_{n+1}}} \\
& =\frac{\sqrt{\tlamz\,\tmuz}}{\tlamz+\tmuz-\sqrt{\tlamz\,\tmuz}}<1.
\end{align*}
We also observe that there exists $c>0$ such that for $K$ large enough, and any $n\in\llbracket\nr,\, \nde\rrbracket$ we have 
\[
\frac{\sqrt{\lambdaK_{n}\muK_{n+1}}}
{\lambdaK_{n}+\muK_{n}-\eta-\sqrt{\lambdaK_{n-1}\,\muK_{n}}}\le\frac{1}{1+\frac{c\,(n-\Kxf)^{2}}{K^{2}}}\;.
\]
The result follows by inspecting the monotonicity in the different cases and using the last part of Proposition \eqref{locmax}.
\end{proof}

\begin{theorem}\label{thm-eig}
Let $\phi\in\Dom(\LKl)\subset \ldeux$ of norm $1$, satisfying
\[
\LKl \phi=-\rho\, \phi
\]
for some real $\rho$.
Then there exist $C(\phi)>0$ and an integer $r(\phi)$ such that for all $n\geq r(\phi)$ we have
\[
|\phi(n)| \leq C(\phi) \, 2^{-n}.
\]
Moreover, $(\phi(n))_n$ does not vanish and is of constant sign.
\end{theorem}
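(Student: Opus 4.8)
The plan is to read the eigenvalue relation $\LKl\phi=-\rho\,\phi$ as the three–term (Jacobi) recursion
\[
a_n\,\phi(n+1)=(c_n-\rho)\,\phi(n)-a_{n-1}\,\phi(n-1)\,\un_{\{n>1\}},\qquad n\ge 1,
\]
where $a_n:=\sqrt{\lambdaK_n\,\muK_{n+1}}>0$ and $c_n:=\lambdaK_n+\muK_n>0$, and to exploit two elementary facts coming from the standing assumptions. First, since $b(x)=x\cdot(b(x)/x)$ and $d(x)=x\cdot(d(x)/x)$ are products of increasing positive functions, $b$ and $d$ are increasing, hence the sequences $(\lambdaK_n)_n$, $(\muK_n)_n$ are increasing; consequently $(a_n)_n$ is non‑decreasing and $(c_n)_n\to+\infty$. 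Second, $a_n/c_n\to 0$: indeed $a_n^2/c_n^2\le(\lambdaK_n/\muK_n)(\muK_{n+1}/\muK_n)$, where the first factor tends to $0$ because $b(x)/d(x)\to0$ and the second stays bounded (a quick consequence of $\sup_x(d'(x)/d(x)-1/x)<+\infty$). Therefore one may fix an integer $N=N(K,\rho)\ge 2$ with $c_n-\rho\ge 3a_n>0$ for all $n\ge N$; everything below takes place on $\{n\ge N\}$, where all quantities are positive, so the sign or size of $\rho$ is irrelevant.

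The heart of the argument is a blow‑up alternative showing that an $\ldeux$ eigenvector must follow the recessive branch of the recursion at infinity. I would first establish: \emph{if there exists $n\ge N$ with $\phi(n)\ne0$ and $|\phi(n)|\ge|\phi(n-1)|$, then $|\phi(m+1)|\ge 2|\phi(m)|$ for every $m\ge n$}. This is a one‑line induction: from the recursion and the reverse triangle inequality, $a_m|\phi(m+1)|\ge(c_m-\rho)|\phi(m)|-a_{m-1}|\phi(m-1)|\ge(c_m-\rho-a_m)|\phi(m)|\ge 2a_m|\phi(m)|$, using $a_{m-1}\le a_m$, $c_m-\rho\ge 3a_m$ and $|\phi(m-1)|\le|\phi(m)|$, so $|\phi(m+1)|\ge 2|\phi(m)|$, which restores the hypothesis at $m+1$. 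But then $|\phi(m)|\to+\infty$, contradicting $\phi\in\ldeux$. Hence no such $n$ exists. Next I would rule out zeros in the tail: if $\phi(n_0)=0$ for some $n_0\ge N$, then $\phi(n_0-1)\ne0$ (two consecutive zeros propagate both ways along the recursion and force $\phi\equiv0$, against $\|\phi\|=1$), the recursion at $n_0$ gives $\phi(n_0+1)=-(a_{n_0-1}/a_{n_0})\phi(n_0-1)\ne0$, and the recursion at $n_0+1$ gives $|\phi(n_0+2)|=\frac{c_{n_0+1}-\rho}{a_{n_0+1}}|\phi(n_0+1)|\ge 3|\phi(n_0+1)|$, which is exactly the configuration just excluded, now at index $n_0+2\ge N$. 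Thus $\phi(n)\ne0$ for all $n\ge N$, and, by the contrapositive of the first assertion, $|\phi(n+1)|<|\phi(n)|$ for all $n\ge N$.

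It then remains to turn strict decrease into geometric decay and to fix the sign, which I would do with the ratios $w_m:=\phi(m+1)/\phi(m)$, well defined and with $|w_m|\le1$ for $m\ge N$. Dividing the recursion at index $m+1$ by $\phi(m+1)$ gives $a_m/w_m=(c_{m+1}-\rho)-a_{m+1}w_{m+1}$; since $|w_{m+1}|\le1$ and $c_{m+1}-\rho\ge 3a_{m+1}$, the right‑hand side is $\ge 2a_{m+1}\ge 2a_m>0$. Hence $w_m>0$ and $w_m\le\tfrac12$ for every $m\ge N$. Positivity of the $w_m$ means consecutive values of $\phi$ share the same sign, so $\phi$ does not vanish and is of constant sign on $\{n\ge N\}$; the bound $w_m\le\tfrac12$ gives $|\phi(n)|\le 2^{-(n-N)}|\phi(N)|$ for $n\ge N$, i.e. the claim with $r(\phi)=N$ and $C(\phi)=2^{N}|\phi(N)|$.

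The only non‑routine point is the first step: recognising that membership in $\ldeux$, together with the domination $a_n\ll c_n$ and the monotonicity of the coefficients, forces the eigenvector onto the fast‑decaying solution, and that a single failure of decrease in the tail already triggers exponential blow‑up. Once that dichotomy is in hand, discarding tail zeros and sharpening to a definite $2^{-n}$ bound with a fixed sign are immediate from the same two inequalities $a_{m-1}\le a_m$ and $c_m-\rho\ge 3a_m$. Note that the non‑vanishing/constant‑sign statement is necessarily about the tail $\{n\ge r(\phi)\}$: for $j\ge1$ the eigenvector $\phi^{\psK}_j$ is $\ldeux$‑orthogonal to the strictly positive $\phi^{\psK}_0$, hence takes values of both signs.
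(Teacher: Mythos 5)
Your proof is correct and takes essentially the same route as the paper: tail domination of the diagonal Jacobi coefficient (ratio at most $\tfrac12$), the fact that any failure of decrease of the eigenvector in the tail forces growth incompatible with $\phi\in\ldeux$, and hence constant sign together with geometric decay at rate $\tfrac12$. The only difference is bookkeeping---you work with absolute values and the ratios $w_m$, whereas the paper runs a four-case sign analysis through its local maximum principle (Propositions \ref{locmax} and \ref{pouet}) to get positivity and monotone decrease before extracting the factor $\tfrac12$---and your closing remark that the non-vanishing/constant-sign claim can only concern the tail $n\ge r(\phi)$ is consistent with what the paper's argument actually establishes.
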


\begin{proof}
It follows from our hypothesis that for any $K>1$ there exists an integer $r_0{\scriptstyle (K)}$ such that, for all $n\geq r_0{\scriptstyle (K)}$, we have
\[
0<\frac{\sqrt{\lambdaK_n\muK_{n+1}}}{\lambdaK_n+\muK_{n+1}+\rho-\sqrt{\lambdaK_{n-1}\,\muK_n}}\leq\frac12.
\]
We can assume that $(\phi(n))_n$ is a sequence of real numbers and $\phi(r_0{\scriptstyle (K)})> 0$.
We start by proving that $(\phi(n))_n$ is positive  and decreasing for $n\ge r_0{\scriptstyle (K)}$.
There are only the following four possibilities. 
\begin{enumerate}
\item
$\phi(r_0{\scriptstyle (K)}+1)\geq \phi(r_0{\scriptstyle (K)})$. It follows from Proposition \ref{pouet} that $\phi$ is increasing for
$n\geq r_0{\scriptstyle (K)}$, contradicting that $\phi$ has norm $1$.
\item
$\phi(r_0{\scriptstyle (K)}+1)< \phi(r_0{\scriptstyle (K)})$, and there exists $r'>r_0{\scriptstyle (K)}$ such that $\phi(r')<0$ and $\phi$ decreases on
$\llbracket r_0{\scriptstyle (K)},r'\rrbracket$, and $\phi\geq 0$ on $\llbracket r_0{\scriptstyle (K)},r'-1\rrbracket$.
Then by Proposition \ref{pouet}, $\phi$ is decreasing for $n\geq r'$, contradicting that $\phi$ is normalized.
\item
$\phi(r_0{\scriptstyle (K)}+1)< \phi(r_0{\scriptstyle (K)})$, and there exists $r'>r_0{\scriptstyle (K)}$ such that $\phi(r')<0$ and $\phi$ is not monotonous on
$\llbracket r_0{\scriptstyle (K)},r'\rrbracket$, and $\phi\geq 0$ on $\llbracket r_0{\scriptstyle (K)},r'-1\rrbracket$. 
Then there exists $r''<r'$ such that $\phi$ is decreasing on $\llbracket r_0{\scriptstyle (K)},r''\rrbracket$, and such that $\phi(r''+1)\geq \phi(r'')$. If $ \phi(r'')>0$,  then we are in 
case 1. If $\phi(r''+1)>0$,  it follows from Proposition \ref{pouet} that $\phi$ is increasing for $n\geq r''+1$, contradicting that $\phi$ has norm $1$.  If $\phi(r'''+1)= \phi(r''')=0$ then $\phi$ is the null sequence
as solution of a second order equation, which leads to a contradiction.
\item
$\phi(r_0{\scriptstyle (K)}+1)< \phi(r_0{\scriptstyle (K)})$, $\phi\geq 0$. Suppose that there exists a local minimum at $r'''$ (finite). Then if $\phi(r'''+1)\geq \phi(r''')>0$, we are in case 1.
If $\phi(r'''+1)>\phi(r''')=0$, then it follows from Proposition \ref{pouet} that $\phi$ is increasing for $n\geq r'''+1$, contradicting that $\phi$ has norm $1$.
If $\phi(r'''+1)= \phi(r''')=0$ then $\phi$ is the null sequence, which leads to a contradiction.
\end{enumerate}

Therefore $\phi$ is striclty positive and monotone decreasing. The result then follows by using the last part of Proposition \ref{locmax}.

\end{proof}

Let us now prove two key lemmas. 

\begin{lemma}\label{convergenceS2}
Let $\phi_{\sK}\in \Dom\big(\mathcal{L}_{K}\big)$ be a normalized sequence such that 
\[
\mathcal{L}_{\sK}\phi^{\psK}=-\rho^{\psK}\phi^{\psK}. 
\]
Assume that there exists a diverging sequence $(K_{p})$ such that 
\[
0\leq \lim_{p\to\infty} \rho^{\psKp}=\rho_{*}<+\infty
\]
and 
\[
\limsup_{p\to\infty}\big\|\phi^{\psKp}\un_{\{\cdot\,\le n_l(K_p)\}}\big\|_{\ldeux}>0.
\]
Then $\rho_{*}\in S_{2}$ and there exists a diverging subsequence
$\big(K_{p_{\ell}}\big)$ such that the limit 
\[
\lim_{\ell\to\infty} \phi^{(K_{p_{\ell}})}=\phi_{*}
\]
exists in $\ldeux$, $\|\phi_{*}\|_{\ldeux}>0$ and $\phi_{*}$ is an eigenvector of\, $\PO$ with eigenvalue $-\rho_{*}$.
\end{lemma}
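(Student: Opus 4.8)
The plan is to localise $\phi^{\psKp}$ near the origin, recognise its truncation there as an approximate eigenvector of the limiting branching operator $\PO$, and then pass to the limit while keeping enough compactness to guarantee a nonzero limit; the coexistence of an Ornstein--Uhlenbeck block around $\Kxf$, living at scale $\sqrt{K_p}$, will be the source of the main difficulty. First I would pass to a subsequence (not relabelled) along which $\|\phi^{\psKp}\un_{\{\cdot\le\nl\}}\|_{\ldeux}\ge\delta$ for some $\delta>0$, and fix $\eta>\sup_p\rho^{\psKp}$ (finite since $\rho^{\psKp}\to\rho_*$). By Proposition~\ref{strucvp}, for $p$ large $\sup_{\nl\le n\le\nr}|\phi^{\psKp}(n)|\le\e^{-a_\eta(\log K_p)^2}$, hence $\|\phi^{\psKp}\un_{\{\nl\le\cdot\le\nr\}}\|_{\ldeux}^2\le\Kxf\,\e^{-2a_\eta(\log K_p)^2}\to0$. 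The remaining mass $\|\phi^{\psKp}\un_{\{\cdot>\nr\}}\|_{\ldeux}$ (the block around $\Kxf$) also tends to $0$; I postpone this to the last paragraph, and grant for now that $\|\phi^{\psKp}\un_{\{\cdot\le\nl\}}\|_{\ldeux}\to1$.

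Next, set $\psi^{\psKp}:=\phi^{\psKp}\un_{\{\cdot\le\nl\}}\in c_{00}$ (taking $\phi^{\psKp}$ real). Using the tridiagonal form of $\LKl$ one gets $\LKl\psi^{\psKp}=-\rho^{\psKp}\psi^{\psKp}+r^{\psKp}$, with $r^{\psKp}$ supported on $\{\nl,\nl+1\}$ and $|r^{\psKp}|\le\sqrt{\lambdaK_\nl\muK_{\nl+1}}\,\max\{|\phi^{\psKp}(\nl)|,|\phi^{\psKp}(\nl+1)|\}$; since $\nl/K_p\to0$ gives $\lambdaK_\nl,\muK_{\nl+1}=\mathcal{O}((\log K_p)^2)$ by Taylor expansion at $0$, Proposition~\ref{strucvp} yields $\|r^{\psKp}\|_{\ldeux}=\mathcal{O}\big((\log K_p)^2\,\e^{-a_\eta(\log K_p)^2}\big)\to0$. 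For tightness of $(\psi^{\psKp})$ I would use the Dirichlet identity, valid for any normalised $\phi$ with $\LKl\phi=-\rho\phi$ (summation by parts, using the symmetry of $\LKl$):
\[
\rho=\muK_1\,\phi(1)^2+\sum_{n\ge1}\Big(\sqrt{\lambdaK_n}\,\phi(n)-\sqrt{\muK_{n+1}}\,\phi(n+1)\Big)^2 .
\]
Applied with $\phi=\phi^{\psKp}$, $\rho=\rho^{\psKp}\le\eta$: on $\llbracket N,\nl\rrbracket$ (with $N$ a large fixed integer) one has $\sqrt{\lambdaK_n}-\sqrt{\muK_{n+1}}\sim(\sqrt{b'(0)}-\sqrt{d'(0)})\sqrt{n}>0$, and, by the local maximum--minimum analysis from the proof of Proposition~\ref{strucvp} together with the constant sign of $\phi^{\psKp}$ from Theorem~\ref{thm-eig}, $\phi^{\psKp}$ is monotone decreasing on $\llbracket N,\nl\rrbracket$ up to an $\ell^2$-negligible piece (the other cases are incompatible with $\|\phi^{\psKp}\un_{\{\cdot\le\nl\}}\|\ge\delta$ and the constant sign). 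Hence each summand is $\ge c\,n\,\phi^{\psKp}(n)^2$, so $\sum_{N<n\le\nl}n\,\phi^{\psKp}(n)^2\le C$ uniformly in $p$, whence $\sum_{N<n\le\nl}\phi^{\psKp}(n)^2\le C/N$ uniformly in $p$: the family $(\psi^{\psKp})$ is tight in $\ldeux$.

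By weak compactness of the unit ball of $\ldeux$ together with tightness, a further subsequence $(K_{p_\ell})$ satisfies $\psi^{(K_{p_\ell})}\to\phi_*$ strongly in $\ldeux$, with $\|\phi_*\|_{\ldeux}\ge\delta>0$ (and, using the first paragraph, $\|\phi_*\|_{\ldeux}=1$). For each $u\in c_{00}$,
\[
\langle\psi^{(K_{p_\ell})},\LKlp u\rangle_{\ldeux}=\langle\LKlp\psi^{(K_{p_\ell})},u\rangle_{\ldeux}=-\rho^{(K_{p_\ell})}\langle\psi^{(K_{p_\ell})},u\rangle_{\ldeux}+\langle r^{(K_{p_\ell})},u\rangle_{\ldeux},
\]
and since $\LKlp u\to\PO u$ in $\ldeux$ by Proposition~\ref{convM0}, letting $\ell\to\infty$ gives $\langle\phi_*,\PO u\rangle_{\ldeux}=-\rho_*\langle\phi_*,u\rangle_{\ldeux}$ for all $u\in c_{00}$. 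As $\PO$ is self-adjoint on $\ldeux$ with $c_{00}$ a core, this means $\phi_*\in\Dom(\PO)$ and $\PO\phi_*=-\rho_*\phi_*$; since $\phi_*\neq0$, $-\rho_*$ is an eigenvalue of $\PO$, i.e. $\rho_*\in\spectre(-\PO)=S_2$. Finally, $\phi^{(K_{p_\ell})}=\psi^{(K_{p_\ell})}+\phi^{(K_{p_\ell})}\un_{\{\cdot>\nl\}}\to\phi_*$ in $\ldeux$, since $\phi^{(K_{p_\ell})}\un_{\{\cdot>\nl\}}\to0$ in $\ldeux$ by the localisation.

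The hard part will be the estimate postponed above, $\|\phi^{\psKp}\un_{\{\cdot>\nr\}}\|_{\ldeux}\to0$: an eigenvector of $\LKl$ with bounded eigenvalue and non-negligible mass near the origin cannot also carry persistent mass in the Ornstein--Uhlenbeck block around $\Kxf$. This is exactly the two-scale difficulty at the heart of the paper. I would argue as follows: since $\LKl$ is tridiagonal and, by Proposition~\ref{strucvp}, $\phi^{\psKp}$ is exponentially small throughout $\llbracket\nl,\nr\rrbracket$, the ``origin block'' $\phi^{\psKp}\un_{\{\cdot\le\nl\}}$ and the ``$\Kxf$ block'' $\phi^{\psKp}\un_{\{\cdot>\nr\}}$ are approximate invariant directions of $\LKl$ coupled only through an off-diagonal quantity of size $\mathcal{O}\big((\log K_p)^2\,\e^{-a_\eta(\log K_p)^2}\big)$ (in fact exactly zero for the sharp cut-offs, by disjointness of supports), whereas their effective eigenvalues---both tending to $\rho_*$---differ by at most a polynomially small amount in $K_p$ (this is where the sharp eigenvalue asymptotics, in the spirit of \cite{CCM1}, enter). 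Since $\spectre(\LKl)$ consists of simple eigenvalues, a two-level perturbation argument then forces each eigenvector to concentrate asymptotically on a single block, and the standing hypothesis $\|\phi^{\psKp}\un_{\{\cdot\le\nl\}}\|\ge\delta$ selects the origin block, so the $\Kxf$ block carries vanishing mass. Making the polynomial lower bound on the separation of the two effective eigenvalues precise is the technical crux.
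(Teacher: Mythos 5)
Your core argument---truncating at $\nl$, bounding the boundary remainder $r^{\psKp}$ via Proposition \ref{strucvp}, getting tightness from the Dirichlet identity, and identifying the weak limit through $\langle\psi^{\psKp},\LKl u\rangle$ for $u\in c_{00}$ using Proposition \ref{convM0} and the self-adjointness of $\PO$---is sound and delivers exactly what the lemma is used for: $\rho_*\in S_2$ and $\ldeux$-convergence of $\phi^{(K_{p_\ell})}\un_{\{\cdot\le \nl\}}$ to a nonzero eigenvector of $\PO$. The paper proceeds differently: it normalizes the truncated vector, checks that it is a quasi-eigenvector of the fixed operator $\PO$ itself (coefficient comparison on $\llbracket 1,\nl\rrbracket$ plus the same exponentially small boundary term), and then applies the abstract Propositions \ref{yaspectre} and \ref{approxvp} together with the discreteness and simplicity of $\spectre(\PO)$ (Theorem \ref{specdeux}); that route needs no compactness, while yours trades Proposition \ref{approxvp} for tightness plus weak-limit identification. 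Two repairs are needed in your tightness step: the constant sign/monotonicity you invoke is not given by Theorem \ref{thm-eig} (which concerns the far tail $n\ge r(\phi)$; excited eigenvectors do oscillate near the origin), and the pointwise bound ``each summand $\ge c\,n\,\phi(n)^2$'' fails at sign changes. Either use the case analysis on $\llbracket\nge,\nde\rrbracket$ from the proof of Proposition \ref{strucvp}, or, more simply, combine Proposition \ref{diri} and Lemma \ref{borneVn} with $\Vn\ge c\,n$ for $N\le n\le \nl$, which yields the uniform bound $\sum_{N<n\le \nl} n\,\phi^{\psKp}(n)^2\le C$ with no monotonicity at all.

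The genuine gap is the claim you ``grant'' in your first paragraph and defer as the hard part: that $\big\|\phi^{\psKp}\un_{\{\cdot>\nr\}}\big\|_{\ldeux}\to0$, hence $\big\|\phi^{\psKp}\un_{\{\cdot\le\nl\}}\big\|_{\ldeux}\to1$, $\|\phi_*\|_{\ldeux}=1$, and norm convergence of the full vector. This is both unnecessary for the lemma and, in general, false under its hypotheses. The lemma asserts only $\|\phi_*\|_{\ldeux}>0$ (not $=1$), precisely because mass may persist in the block around $\Kxf$: compare Proposition \ref{decoupe}, where $\|\phi_*\|_{\ldeux}^2+\|\varphi_*\|_{\Ldeux}^2=1$ with both pieces possibly nonzero, and Theorem \ref{prsquelafin} 2), which is designed exactly for the degenerate case $\rho_*\in S_1\cap S_2$ (e.g.\ the logistic model), where hybridized eigenvectors sharing mass between the two blocks are expected. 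The block near $\Kxf$ lives on indices tending to infinity, so it vanishes only pointwise/weakly, and the convergence in the lemma is to be understood for the localized part, which is the only way it is used in the paper. Your two-level perturbation sketch cannot close this either: concentration on a single block requires the detuning of the two effective eigenvalues to dominate the exponentially small coupling, i.e.\ a \emph{lower} bound on the splitting, whereas the asymptotics you invoke give only an upper bound; when the detuning is comparable to or smaller than the coupling, mixing occurs, and nothing in the hypotheses excludes this. Drop the postponed claim and the rest of your argument stands.
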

\begin{proof}
Let $(p_{\ell})$ be a diverging sequence of integers such that
\[
\lim_{\ell\to\infty} \big\|\phi^{(K_{p_{\ell}})}\;\un_{\{\cdot\,\le n_l(K)\}}\big\|_{\ldeux}
=\limsup_{p\to\infty}\big\|\phi^{(K_{p})}\;\un_{\{\cdot\,\le n_l(K_p)\}}\big\|_{\ldeux}\;.
\]
We define for each $\ell$ a  normalized sequences in $\ldeux$ by
\[
\psi_{\ell}(n)=\frac{\phi^{(K_{p_{\ell}})}(n)\;  \un_{\{n\,\le n_l(K_{p_{\ell}})\}} }{
\big\|\phi^{(K_{p_{\ell}})}\;\un_{\{\cdot\,\le n_l(K_{p_\ell})\}}\big\|_{\ldeux}}\,.
\]
It is easy to verify using Proposition \ref{strucvp}  that $\psi_{\ell}\in \Dom(\PO)$ and 
\[
\big\|\PO \psi_{\ell}+\rho^{(K_{p_{\ell}})}\psi_{\ell}\|_{\ldeux}\le \Oun\;
\frac{(\log K_{p_{\ell}})^{2} \,\e^{-a\,(\log K_{p_{\ell}})^{2}}}{\big\|\phi^{(K_{p_{\ell}})}\;
\un_{\{\cdot\,\le n_l(K_{p_{\ell}})\}}\big\|_{\ldeux}}\;.
\]
The first result follows from Proposition \ref{yaspectre} since the r.h.s. tends to zero. 

The second result follows from Proposition \ref {approxvp} since 
the spectrum of $\PO$ is discrete and
simple by Theorem \ref{specdeux}.  
\end{proof}

\begin{lemma}\label{convergenceS1}
Let $\phi^{(K)}\in \Dom\big(\mathcal{L}_{K}\big)$ be a normalized sequence such that 
\[
\mathcal{L}_{\sK}\phi^{\psK}=-\rho^{\psK}\;\phi^{\psK}.
\]
Let us assume that there exists a diverging subsequence $(K_{p})$ such that
\[
0\leq \lim_{p\to\infty} \rho^{(K_{p})}=\rho_{*}<+\infty
\]
and 
\[
\lim_{p\to\infty}\big\|\phi^{(K_{p})}\;\un_{\{\cdot\,\ge n_r(K_p)\}}\big\|_{\ldeux}>0. 
\]
Then $\rho_{*}\in S_{1}$ and there exists a diverging subsequence of integers $(p_{\ell})$ such that 
\[
\lim_{\ell\to\infty}\frac{ Q_{K_{p_{\ell}}}\phi^{(K_{p_{\ell}})}}{\big\| Q_{K_{p_{\ell}}}\phi^{(K_{p_{\ell}})}\big\|_{\Ldeux}}=\psi_{*}
\]
exists in $\Ldeux$, $\|\psi_{*}\|_{\Ldeux}=1$, and $\psi_{*}$ is an eigenvector of\, $\HO$ with eigenvalue $-\rho_{*}$.
\end{lemma}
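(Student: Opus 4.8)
The plan is to adapt the proof of Lemma \ref{convergenceS2} to the ``$\lfloor K\xf\rfloor$ end'' of the spectrum; the one genuinely new feature is that the part of $\phi^{(K)}$ concentrated near $\lfloor K\xf\rfloor$ lives at scale $\sqrt K$, so it must be transported into $\Ldeux$ via the isometry $Q_{\sK}$ (Lemma \ref{isometry}) and compared there with $\HO$ rather than with $\PO$. First I would localize: passing to a subsequence of $(K_p)$ (not relabelled) along which $\|\phi^{(K_p)}\un_{\{n\le n_\ell(K_p)\}}\|_{\ldeux}$ converges, Proposition \ref{strucvp} shows that the mass of $\phi^{(K_p)}$ on the middle block $\llbracket n_\ell(K_p),n_r(K_p)\rrbracket$ is $O(\e^{-a_\eta(\log K_p)^2})$, hence $c_p:=\|\phi^{(K_p)}\un_{\{n>n_\ell(K_p)\}}\|_{\ldeux}$ converges to some $c\geq\lim_p\|\phi^{(K_p)}\un_{\{\cdot\geq n_r(K_p)\}}\|_{\ldeux}>0$. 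Put $\psi_p:=c_p^{-1}Q_{K_p}\big(\phi^{(K_p)}\un_{\{n>n_\ell(K_p)\}}\big)\in\mathscr{H}_{K_p}\subset\Ldeux$, of $\Ldeux$-norm one. Using $P_{\sK}Q_{\sK}=\mathrm{Id}_{\ldeux}$, the defining formula $\LKL=Q_{\sK}\LKl P_{\sK}$, and the identity $\langle\LKL f,g\rangle_{\Ldeux}=\langle\LKl P_{\sK}f,P_{\sK}g\rangle_{\ldeux}$ (so that $\LKL$ is symmetric on $\Ldeux$), the eigenrelation $\LKl\phi^{(K_p)}=-\rho^{(K_p)}\phi^{(K_p)}$ gives $\mathscr{L}_{K_p}\psi_p=-\rho^{(K_p)}\psi_p+E_p$, where $E_p$ is the commutator term created by the cut at $n_\ell(K_p)$, supported on three points near $n_\ell(K_p)$, with $\|E_p\|_{\Ldeux}=O((\log K_p)^2\,\e^{-a_\eta(\log K_p)^2})\to 0$ by Proposition \ref{strucvp}.

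Second, I would establish that $(\psi_p)_p$ is relatively compact in $\Ldeux$ by the Fréchet--Kolmogorov--Riesz criterion. Uniform tightness at infinity follows by combining Proposition \ref{strucvp} (smallness on the middle block), Theorem \ref{thm-eig} (the $2^{-n}$ right tail), and the monotonicity and decay estimates valid where the potential $V_n(K)$ of \eqref{potential} exceeds $\eta$ --- a region which near $\lfloor K\xf\rfloor$ contains $\{\,|n-\Kxf|\geq C\sqrt K\,\}$, since there $V_n(K)\asymp(n-\Kxf)^2/K$ (cf.\ the estimates in the proof of Proposition \ref{strucvp}); this yields an exponential decay of $\phi^{(K_p)}$, uniform in $p$, once $|n-\Kxf|\gg\sqrt K$, that is $\sup_p\int_{|x|>R}|\psi_p(x)|^2\,\dd x\to0$ as $R\to\infty$ in the variable $x=(n-\Kxf)/\sqrt K$. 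Uniform $\Ldeux$-equicontinuity of translations follows from the Dirichlet identity $\langle-\LKl v,v\rangle_{\ldeux}=\sum_n\sqrt{\lambdaK_{n}\,\muK_{n+1}}\,|v(n+1)-v(n)|^2+\sum_n V_n(K)\,|v(n)|^2$ (a direct computation from \eqref{potential}), applied to $v=\phi^{(K_p)}$ so that the left-hand side equals $\rho^{(K_p)}\le\eta$: together with $\sqrt{\lambdaK_{n}\muK_{n+1}}\asymp K_p$ near $\lfloor K_p\xf\rfloor$ and the uniform lower bound on $V_n(K_p)$ this gives $\sum_n|\phi^{(K_p)}(n+1)-\phi^{(K_p)}(n)|^2=O(1/K_p)$ on that window, hence $\sup_p\|\psi_p(\cdot+h)-\psi_p\|_{\Ldeux}^2=O(|h|)$ as $h\to0$. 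Extract $\psi_{p_\ell}\to\psi_*$ strongly in $\Ldeux$; by tightness $\|\psi_*\|_{\Ldeux}=1$.

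Third, I would identify the limit. For $g\in\mathscr{D}$, Proposition \ref{convS} gives $\mathscr{L}_{K_{p_\ell}}g\to\HO g$ in $\Ldeux$, whence by symmetry of $\mathscr{L}_{K_{p_\ell}}$, $\langle\psi_*,\HO g\rangle=\lim_\ell\langle\psi_{p_\ell},\mathscr{L}_{K_{p_\ell}}g\rangle=\lim_\ell\langle\mathscr{L}_{K_{p_\ell}}\psi_{p_\ell},g\rangle=\lim_\ell\langle-\rho^{(K_{p_\ell})}\psi_{p_\ell}+E_{p_\ell},g\rangle=-\rho_*\langle\psi_*,g\rangle$, so $\HO\psi_*=-\rho_*\psi_*$ in the distributional sense; elliptic regularity makes $\psi_*$ a genuine eigenfunction, and since $\|\psi_*\|_{\Ldeux}=1$, Remark \ref{rem:OUHO} gives $\rho_*\in S_1$. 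Finally, to reach the normalization of the statement: $\|Q_{K_{p_\ell}}\phi^{(K_{p_\ell})}\|_{\Ldeux}=1$ by Lemma \ref{isometry}, and $Q_{K_{p_\ell}}\phi^{(K_{p_\ell})}=c_{p_\ell}\psi_{p_\ell}+Q_{K_{p_\ell}}\big(\phi^{(K_{p_\ell})}\un_{\{n\le n_\ell(K_{p_\ell})\}}\big)$, the last summand being supported near $x=-\xf\sqrt{K_{p_\ell}}\to-\infty$; a strongly convergent norm-one sequence cannot lose mass at $-\infty$, so $\|\phi^{(K_{p_\ell})}\un_{\{n\le n_\ell(K_{p_\ell})\}}\|_{\ldeux}\to0$ (equivalently: the coupling across the barrier $\llbracket n_\ell,n_r\rrbracket$ is super-polynomially small whereas the relevant block eigenvalues are only polynomially separated, so $\phi^{(K)}$ is asymptotically not of ``mixed type''; alternatively, a positive limit of the origin mass would make Lemma \ref{convergenceS2} apply with a nonzero $\ldeux$-limit, impossible since the $\lfloor K\xf\rfloor$-mass stays bounded away from $0$). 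Hence $c_{p_\ell}\to1$ and $Q_{K_{p_\ell}}\phi^{(K_{p_\ell})}\to\psi_*$ in $\Ldeux$.

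The step I expect to be the main obstacle is the compactness: establishing the uniform-in-$K$ a priori exponential decay of the eigenvectors outside the window $|n-\Kxf|\lesssim\sqrt K$ (from the quadratic growth of $V_n(K)$) together with the equicontinuity estimate, and then upgrading the merely weak operator convergence of Proposition \ref{convS} --- valid on smooth test functions only --- to strong $\Ldeux$-convergence of eigenfunctions. This is exactly the ``different scales and different function spaces'' difficulty stressed in the introduction; once it is in place, the rest is a transcription of the proof of Lemma \ref{convergenceS2}.
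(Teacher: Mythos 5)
Your main line of argument is the same as the paper's: truncate the eigenvector away from the origin (the paper cuts at $n_r(K_p)$, you cut at $n_l(K_p)$, which is equivalent thanks to Proposition \ref{strucvp}), renormalize, observe via Proposition \ref{strucvp} that the truncated vector is a quasi-eigenvector of $\mathcal{L}_{K_p}$ with an error tending to zero, transport it to $\Ldeux$ by $Q_{K_p}$, get compactness from the Fr\'echet--Kolmogorov--Riesz criterion with tightness coming from the growth of the potential $\Vn$ and equicontinuity from the Dirichlet form (this is precisely the content of Proposition \ref{sobmodif}, Lemma \ref{unifint} and Lemma \ref{regu}), and identify the limit by pairing against test functions in $\mathscr{D}$, using Proposition \ref{convS}, symmetry, and self-adjointness of $\HO$. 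Up to packaging, this is the paper's proof, and it correctly yields that the renormalized truncated piece converges along a subsequence to a normalized eigenfunction of $\HO$, hence $\rho_*\in S_1$.

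Your final paragraph, however, contains a genuine gap. You claim that $\big\|\phi^{(K_{p_\ell})}\un_{\{\cdot\,\le\, n_l(K_{p_\ell})\}}\big\|_{\ldeux}\to 0$, i.e.\ $c_{p_\ell}\to 1$, so that the full $Q_{K_{p_\ell}}\phi^{(K_{p_\ell})}$ converges to a unit vector. The first justification (``a strongly convergent norm-one sequence cannot lose mass at $-\infty$'') is circular: strong convergence of $Q_{K_{p_\ell}}\phi^{(K_{p_\ell})}$ is exactly what is at stake, whereas what you have actually proved is convergence of the truncated, renormalized piece only. The alternative justification via Lemma \ref{convergenceS2} also fails: what the proof of that lemma really delivers is convergence of the origin piece (cut at $n_l$ and renormalized) to an eigenvector of $\PO$, and this is perfectly compatible with the mass near $\Kxf$ remaining bounded away from zero, so no contradiction arises. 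Finally, the heuristic that ``the block eigenvalues are only polynomially separated, so $\phi^{(K)}$ is not of mixed type'' is unfounded: when $\rho_*\in S_1\cap S_2$ (e.g.\ the logistic model) the two block eigenvalues are asymptotically degenerate and may, along subsequences, come closer than the super-polynomially small coupling, so genuinely mixed eigenvectors with both masses of order one cannot be excluded --- this is exactly why Proposition \ref{decoupe} only asserts $\|\phi_*\|_{\ldeux}^2+\|\varphi_*\|_{\Ldeux}^2=1$ with each piece possibly nonzero, and why the case $S_1\cap S_2$ is handled separately in Theorem \ref{prsquelafin}. Note that the paper's own proof does not attempt this extra step: it establishes the convergence for the truncated renormalized sequence $Q_{K_{p_\ell}}\psi^{(K_{p_\ell})}$, which is the form actually used afterwards (Proposition \ref{decoupe}). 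If you drop the final paragraph and state the conclusion in that truncated form, your argument coincides with the paper's and is correct.
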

\begin{proof}
We define for each $p$ a  normalized sequence in $\ldeux$ by
\[
\psi^{(K_{p})}(n)=\frac{\phi^{(K_{p})}(n)\;  \un_{\{n\,\ge\, n_r(K_p)\}} }{\big\|\phi^{(K_{p})}\;\un_{\{\cdot\,\ge\, n_r(K_p)\}}\big\|_{\ldeux}}\;.
\]
It is easy to verify using Proposition \ref{strucvp} that $\psi^{(K_{p})}\in  \Dom\big(\mathcal{L}_{K_p})$ and 
\begin{equation}\label{quasierreur}
\big\|\mathcal{L}_{K_{p}} \psi^{(K_{p})} +\rho^{(K_{p})}\psi^{(K_{p})}\|_{\ldeux}\le 
\Oun\;\frac{K_{p} \,\e^{-a\,(\log K_{p})^{2}}}{ \big\|\phi^{(K_{p})}\;\un_{\{\cdot\,\ge\, n_r(K_p)\}}\big\|_{\ldeux}}\;.
\end{equation}
We apply Proposition \ref{sobmodif}, Lemma \ref{unifint}, Lemma \ref{regu}, and Theorem \ref{KFR} to conclude that there  exists a diverging sequence
of integers $(p_{\ell})$ such that the sequence of functions $Q_{K_{p_{\ell}}}\psi^{(K_{p_\ell})}$ converges in $\Ldeux$ to a normalised fonction $\psi_{*}$. 

Let $u\in \mathscr{D}$. We have from \eqref{quasierreur}
\[
\lim_{\ell\to\infty}\big\langle P_{K_{p_{\ell}}}u\,,\,\mathcal{L}_{K_{p_{\ell}}}\, \psi^{(K_{p_{\ell}})}+\rho^{(K_{p_{\ell}})}\,\psi^{(K_{p_{\ell}})}\big\rangle_{\ldeux}=0
\]
hence (since $P_{K_{p_{\ell}}}\,u\in \Dom\big(\mathcal{L}_{K_{p_{\ell}}}\big)$)
\[
\lim_{\ell\to\infty}\big\langle \mathcal{L}_{K_{p_{\ell}}}\,P_{K_{p_{\ell}}}u\,,\,\psi^{(K_{p_{\ell}})}\big\rangle_{\ldeux}=
-\rho_{*}\lim_{\ell\to\infty}\big\langle P_{K_{p_{\ell}}}u\,,\,\psi^{(K_{p_{\ell}})}\big\rangle_{\ldeux}\;.
\]
From the isometric property of $Q$ (see Lemma \ref{isometry}) we get
\begin{align*}
\MoveEqLeft \lim_{\ell\to\infty}\big\langle Q_{K_{p_{\ell}}}\mathcal{L}_{K_{p_{\ell}}}\,P_{K_{p_{\ell}}}u\,,\,Q_{K_{p_{\ell}}}\psi^{(K_{p_{\ell}})}\big\rangle_{\Ldeux}\\
&=-\rho_{*}\lim_{\ell\to\infty}\big\langle Q_{K_{p_{\ell}}} P_{K_{p_{\ell}}}u\,,\,Q_{K_{p_{\ell}}} \psi^{(K_{p_{\ell}})}\big\rangle_{\Ldeux}\;.
\end{align*}
In other words 
\[
\lim_{\ell\to\infty}\big\langle \mathscr{L}_{K_{p_{\ell}}}u,Q_{K_{p_{\ell}}}\psi^{(K_{p_{\ell}})}\big\rangle_{\Ldeux}
=-\rho_{*}\lim_{\ell\to\infty}\big\langle Q_{K_{p_{\ell}}}P_{K_{p_{\ell}}}u,Q_{K_{p_{\ell}}}\psi^{(K_{p_{\ell}})}\big\rangle_{\Ldeux}.
\]
Using the convergence in $\Ldeux$ of $\big(\,Q_{K_{p_{\ell}}}\psi^{(K_{p_{\ell}})}\big)_\ell$ to $\psi_{*}$,
Proposition \ref{convS}, and Lemma  \ref{isomQP}, we get for all $u\in \mathscr{D}$,
\[
\big\langle
\HO u,\psi_{*}\big\rangle_{\Ldeux}=-\rho_{*}\big\langle u\,,\, \psi_{*}\big\rangle_{\Ldeux}\;.
\]
Since $\mathscr{D}$ is dense in the domain of the self-adjoint operator $\HO$,  we conclude that $\psi_{*}$ is an eigenvector of $\HO$. 
\end{proof}

We now state three key propositions.

\begin{proposition}\label{decoupe}
Let $j$ be fixed and let $(K_{p})$ be a diverging sequence such that 
\[
\lim_{p\to\infty} \rhoKp{j}{p}=\rho_{*}\;.
\]
Let $\phiKp{j}{p}$ be a normalized eigenvector of $\mathcal{L}_{\sK_p}$ with eigenvalue $-\rhoKp{j}{p}$.
Then there exists an infinite sequence of integers $(p_\ell)$ such that
\begin{itemize}
\item
$
\phiKp{j}{p_\ell}\,\un_{\{\cdot\,<\,n_l(\sK_{p_\ell})\}}\xrightarrow[]{\ldeux} \phi_* 
$, where $\phi_*$ is either the null sequence or an eigenvector of \,$\PO$ with eigenvalue $-\rho_*$, namely  $\rho_*\in S_{2}$.
\item
$
Q_{K_{p_{\ell}}}\phiKp{j}{p_\ell}\,\un_{\{\cdot\,\ge\, n_r(\sK_{p_\ell})\}} \xrightarrow[]{\Ldeux} \varphi_* 
$, where $\varphi_*$ is either the null function or an eigenvector of \,$\HO$ with eigenvalue $-\rho_*$, namely  $\rho_*\in S_{1}$.
\end{itemize}
Moreover, $\|\phi_*\|_{\ldeux}^2+\|\varphi_*\|_{\Ldeux}^2=1$.
\end{proposition}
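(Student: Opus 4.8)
The strategy is to decompose the normalized eigenvector $\phiKp{j}{p}$ into three pieces: its restriction to $\llbracket 1,\nl-1\rrbracket$, its restriction to $\llbracket \nl,\nr\rrbracket$, and its restriction to $\llbracket \nr,+\infty\rrbracket$. Proposition \ref{strucvp} is the crucial input: since $\rho_*<+\infty$ by Corollary \ref{cfini}, for $p$ large we have $\rhoKp{j}{p}<\eta$ for a fixed $\eta>\rho_*$, so the middle piece satisfies $\sup_{\nl\le n\le\nr}|\phiKp{j}{p}(n)|\le\e^{-a_\eta(\log K_p)^2}$. Since this interval has length at most $\Kxf$, the $\ell^2$-mass of the middle piece is bounded by $\Oun\,\sqrt{K_p}\,\e^{-a_\eta(\log K_p)^2}\to 0$. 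Consequently $\|\phiKp{j}{p}\un_{\{\cdot<\nl\}}\|_{\ldeux}^2+\|\phiKp{j}{p}\un_{\{\cdot\ge\nr\}}\|_{\ldeux}^2\to 1$, and (since the two surviving pieces are orthogonal) the overlap at $n=\nl-1,\nl,\nr,\nr+1$ is negligible.

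\textbf{Extraction of limits.} Next I pass to the two surviving pieces. First consider a subsequence along which $\|\phiKp{j}{p}\un_{\{\cdot<\nl\}}\|_{\ldeux}$ converges; if the limit is positive, apply Lemma \ref{convergenceS2} to the full eigenvector (whose hypothesis $\limsup_p\|\phi^{\psKp}\un_{\{\cdot\le\nl\}}\|_{\ldeux}>0$ is then met) to conclude $\rho_*\in S_2$ and to extract a further subsequence $(p_\ell)$ along which $\phi^{(K_{p_\ell})}\to\phi_*$ in $\ldeux$, with $\phi_*$ an eigenvector of $\PO$ for $-\rho_*$; by Theorem \ref{thm-eig} the tail of each $\phi^{(K_{p_\ell})}$ decays like $2^{-n}$ uniformly enough (or one argues directly) that $\phiKp{j}{p_\ell}\un_{\{\cdot<\nl\}}\to\phi_*$ as well, because the mass of $\phi_*$ and of each $\phi^{(K_{p_\ell})}$ outside $\llbracket 1,\nl\rrbracket$ tends to zero. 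If instead the limit is $0$, set $\phi_*=0$. Symmetrically, working along a further subsequence on which $\|\phiKp{j}{p}\un_{\{\cdot\ge\nr\}}\|_{\ldeux}$ converges: if the limit is positive, apply Lemma \ref{convergenceS1} to get $\rho_*\in S_1$ and, along a subsequence, $Q_{K_{p_\ell}}\psi^{(K_{p_\ell})}\to\psi_*$ with $\|\psi_*\|_{\Ldeux}=1$ and $\HO\psi_*=-\rho_*\psi_*$; rescaling back, $Q_{K_{p_\ell}}\phiKp{j}{p_\ell}\un_{\{\cdot\ge\nr\}}\to\varphi_*:=\big(\lim_\ell\|\phiKp{j}{p_\ell}\un_{\{\cdot\ge\nr\}}\|_{\ldeux}\big)\,\psi_*$ in $\Ldeux$, an eigenfunction of $\HO$. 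If the limit is $0$, set $\varphi_*=0$.

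\textbf{Reconciling the two pieces.} The normalization $\|\phi_*\|_{\ldeux}^2+\|\varphi_*\|_{\Ldeux}^2=1$ follows from the mass-splitting established above: $Q_{K}$ is an isometry (Lemma \ref{isometry}), so $\|Q_{K_{p_\ell}}\phiKp{j}{p_\ell}\un_{\{\cdot\ge\nr\}}\|_{\Ldeux}=\|\phiKp{j}{p_\ell}\un_{\{\cdot\ge\nr\}}\|_{\ldeux}$, and since the three contributions partition the unit $\ell^2$-norm with the middle one vanishing, the two squared limits add to $1$. The only subtlety is that $\rho_*$ might a priori belong to $S_1\setminus S_2$ or $S_2\setminus S_1$, in which case one of $\phi_*$, $\varphi_*$ is forced to be $0$ (otherwise the nonzero piece would produce an eigenvector of the wrong operator, so its eigenvalue would have to lie in the other spectrum too — contradiction); but the statement already allows a piece to be the null sequence/function, so nothing more is needed.

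\textbf{Main obstacle.} The delicate point is ensuring that the $\ell^2$- (resp. $\Ldeux$-) convergence of the \emph{truncated} eigenvectors is inherited from the convergence of the \emph{renormalized truncations} supplied by Lemmas \ref{convergenceS2} and \ref{convergenceS1}. This requires that no mass escapes to $n=+\infty$ in the first piece — handled by the uniform $2^{-n}$ tail bound of Theorem \ref{thm-eig} together with the fact that $\phi_*$ itself (being an eigenvector of $\PO$) has such a tail — and that, in the second piece, the interface region $n\approx\nr$ and the far field $|x|\gtrsim(\log K)^2$ carry vanishing mass, which is exactly the content of the estimates in the proof of Proposition \ref{convS}. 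Assembling these uniformities into a clean diagonal extraction of a single subsequence $(p_\ell)$ that works for both pieces simultaneously is the bookkeeping core of the argument.
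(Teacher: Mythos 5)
Your proposal is correct and follows essentially the same route as the paper's proof: the dichotomy on whether the mass on $\{\cdot<\nl\}$ (resp.\ $\{\cdot\ge\nr\}$) survives, Lemmas \ref{convergenceS2} and \ref{convergenceS1} in the surviving cases, Proposition \ref{strucvp} to kill the middle region, and the isometry of $Q_{\sK}$ plus normalization for $\|\phi_*\|_{\ldeux}^2+\|\varphi_*\|_{\Ldeux}^2=1$. The only point to adjust is your reconciliation step: Lemma \ref{convergenceS2} should be read (as its proof shows) as a statement about the normalized truncation $\phi^{\psKp}\un_{\{\cdot\le n_l\}}$ rather than the full eigenvector, so the convergence of the truncated piece is immediate and no uniform-in-$K$ tail bound from Theorem \ref{thm-eig} is needed (nor available, since the eigenvector may retain mass near $K\xf$).
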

\begin{proof}
We have either
\[
\lim_{p\to\infty}\Big\|\phiKp{j}{p}\,\un_{\{\cdot\,<n_l(K_p)\}}\Big\|_{\ldeux}=0
\]
or there exists an infinite sequence of integers $(p_{\ell})$ such that
\[
\lim_{\ell\to\infty}\Big\|\phiKp{j}{p_\ell}\,\un_{\{\cdot\,<n_l(K_{p_{\ell}})\}}\Big\|_{\ldeux}>0\,.
\]
The first statement of the proposition follows from Lemma \ref{convergenceS2} applied to the diverging sequence $(K_{p_\ell})$.
Similarly, either 
\[
\lim_{p\to\infty}\Big\|\phiKp{j}{p}\,\un_{\{\cdot\,\ge\,\nr\}}\Big\|_{\ldeux}=0
\]
or there exists an infinite sequence of integers $(p_{\ell})$ such that
\[
\lim_{\ell\to\infty}\Big\|\phiKp{j}{p_\ell}\,\un_{\{\cdot\,\ge\, n_r(K_{p_\ell})\}}\Big\|_{\ldeux}>0.
\]
The second statement of the proposition follows from Lemma \ref{convergenceS1} applied to the diverging sequence $(K_{p_\ell})$.
The last statement follows from the normalisation $\phiKp{j}{p}$ and Proposition \ref{strucvp}.
\end{proof}

\begin{proposition}\label{prop-quasi-S2}
Let $\rho\in S_2$.
Then  there exists a normalized vector $v_\rho\in\Dom(\mathcal{L}_{\sK})$  with eigenvalue $-\rho$ such that
\[
\lim_{K\to\infty} \|\LKl v_\rho +\rho\, v_\rho\|_{\ell^2}=0.
\]
Moreover
\[
\lim_{K\to+\infty} \big\| v_\rho \un_{\{\cdot\,>\,n_l(\sK)\}}\big\|_{\ldeux}=0.
\]
\end{proposition}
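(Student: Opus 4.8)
The plan is to take for $v_\rho$ the normalized eigenvector of $\PO$ itself (which does not depend on $K$), and to show that it is a quasi-eigenvector of $\LKl$ for the eigenvalue $-\rho$, concentrated near the origin. Since $\rho\in S_2$ and $\spectre(\PO)=-S_2$ by Theorem \ref{specdeux}, such a normalized $v_\rho$ exists and satisfies $\PO v_\rho=-\rho\,v_\rho$. The first point to check is that $v_\rho\in\Dom(\LKl)$ for every $K$. Because the closures of $\LKl$ and $\PO$ from $c_{00}$ are unambiguous (see Section 1), membership in $\Dom(\LKl)$ only requires $\LKl v_\rho\in\ldeux$, where $\LKl v_\rho$ is computed by the formal three-term expression. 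From the recurrence $\PO v_\rho=-\rho\,v_\rho$ one sees, by the argument used in the proof of Theorem \ref{thm-eig} (the $\ell^2$, i.e.\ recessive, solution now decaying geometrically at rate $\sqrt{\tmuz/\tlamz}\in(0,1)$ because $\tlamz>\tmuz$), that $v_\rho(n)$ decays geometrically in $n$; together with the fact that, for each fixed $K$, the rates $\lambdaK_n$ and $\muK_n$ grow subexponentially in $n$ --- a consequence of \eqref{hipopo} --- this yields $\LKl v_\rho\in\ldeux$.

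Next I would establish the approximate eigenvalue equation. Since $\PO v_\rho=-\rho\,v_\rho$,
\begin{align*}
\big(\LKl v_\rho+\rho\,v_\rho\big)(n)
&=\big(\sqrt{\lambdaK_n\muK_{n+1}}-\sqrt{\tlamz\tmuz\,n(n+1)}\big)\,v_\rho(n+1)\\
&\quad+\big(\sqrt{\lambdaK_{n-1}\muK_n}-\sqrt{\tlamz\tmuz\,(n-1)n}\big)\,v_\rho(n-1)\,\un_{\{n>1\}}\\
&\quad-\big(\lambdaK_n+\muK_n-n(\tlamz+\tmuz)\big)\,v_\rho(n).
\end{align*}
For each fixed $n$ all three coefficient differences tend to $0$ as $K\to\infty$, since $\lambdaK_n\to\tlamz\,n$ and $\muK_n\to\tmuz\,n$ (Proposition \ref{convM0}). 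To upgrade this to convergence of the $\ldeux$-norm I would dominate uniformly in $K\geq1$: writing $\lambdaK_n=n\,b(n/K)/(n/K)$ and using $0<n/K\leq n$ gives $\lambdaK_n\leq n\sup_{0<x\leq n}b(x)/x=:M_n<\infty$ (and similarly for $\muK_n$), with $M_n$ subexponential by \eqref{hipopo}; since $|v_\rho|$ decays geometrically, $\big|(\LKl v_\rho+\rho\,v_\rho)(n)\big|^2$ is bounded, uniformly in $K\geq1$, by a fixed summable sequence. Dominated convergence then gives $\big\|\LKl v_\rho+\rho\,v_\rho\big\|_{\ldeux}\to0$.

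Finally, $v_\rho$ is normalized by construction, and since it does not depend on $K$ while $\nl=\lfloor(\log K)^2\rfloor\to\infty$, the tail $\big\|v_\rho\,\un_{\{\cdot>\nl\}}\big\|_{\ldeux}^2=\sum_{n>\nl}|v_\rho(n)|^2$ is the tail of a convergent series and hence tends to $0$. This proves the proposition. I expect the only genuinely delicate point to be the two claims of the first two paragraphs that involve all $n\in\integers$ at once --- namely $v_\rho\in\Dom(\LKl)$ and the $K$-uniform summable domination --- since these require pairing the geometric decay of the $\PO$-eigenvector against the subexponential growth of the rates provided by \eqref{hipopo}; on any fixed window $\{n\leq N\}$ the estimate is just Proposition \ref{convM0}.
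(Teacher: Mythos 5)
Your proposal is correct and follows essentially the same route as the paper: take the $K$-independent normalized eigenvector $v_\rho$ of $\PO$ from Theorem \ref{specdeux}, and play its geometric decay against the $K$-uniform subexponential bound on the rates coming from \eqref{hipopo} (the paper splits the sum at $\lfloor\log K\rfloor$ to get explicit rates, whereas you use a $K$-uniform summable dominant plus dominated convergence --- an inessential difference). One small touch-up: the geometric decay of $v_\rho$ is best quoted directly from the explicit formula \eqref{fatigue} in Theorem \ref{specdeux} (as the paper does), since the ratio argument of Theorem \ref{thm-eig} does not transfer verbatim to $\PO$.
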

\begin{proof}
Let $v_\rho$ be a normalized eigenvector corresponding to the eigenvalue $-\rho$ for the operator $\PO$ in the space $\ldeux$.
From the assumptions we have for $n\leq \lfloor \log K\rfloor$ 
\[
\lambdaK_n=\tlamz + \mathcal{O}\left(\frac{\log K}{K}\right)\quad\text{and}\quad\muK_n=\tmuz+ \mathcal{O}\left(\frac{\log K}{K}\right).
\]
Since $\PO v_\rho+\rho\, v_\rho=0$, the reader can easily check that
\[
\|\LKl v_\rho +\rho\, v_\rho\|_{\ell^2(\integers\cap \{ 1,\ldots,\lfloor\log K\rfloor\})} \leq \mathcal{O}\left(\frac{\log K}{K}\right).
\]
Now using Theorem \ref{specdeux} we have
\[
v_{\rho}(n)=\sqrt{n}\,\left(\frac{\tmuz}{\tlamz}\right)^{\frac{n}{2}}P(n)
\]
where $P$ is certain polynomial. Hence there exists $c_v>0$ such that for any $n\in \integers$,
$|v_{\rho}(n)|\leq c_v \left(\frac{\tmuz}{\tlamz}\right)^{\frac{n}{4}}$.
By \eqref{hipopo} there exists $c>0$ such $b(x)+d(x)\leq c\, \e^x$ for all $x\geq 0$. 
This implies that $\lambda_n^{\psKp}\leq c K_p\, \e^{n/K_p}$ and $\mu_n^{\psKp}\leq c K_p \,\e^{n/K_p}$.
The reader can easily check that
\[
\|\LKl v_\rho +\rho\, v_\rho\|_{\ell^2(\integers\cap \{\lfloor \log K\rfloor+1,\ldots,\infty\})} \leq \Oun \left(\frac{\tmuz}{\tlamz}\right)^{\frac{\log K}{4}}.
\]
It follows that $v_\rho\in \Dom(\LKl)$, and (remember that $\tlamz>\tmuz$)
\[
\lim_{K\to\infty} \|\LKl v_\rho +\rho \, v_\rho\|_{\ldeux}=0.
\]
The other statement follows at once from the exponential decay of $v$.
\end{proof}

\begin{proposition}\label{prop-quasi-S1}
Let $\rho\in S_1$.
Then  there exists a sequence of normalized vectors $(\psi_\rho^{(K)})_K\subset \ldeux$
such that $\psi_\rho^{\psK}\in\Dom(\mathcal{L}_{\sK})$, and
\[
\lim_{K\to\infty}\big\| \LKl \psi_\rho^{\psK}+\rho\, \psi_\rho^{\psK}\big\|_{\ldeux}=0.
\]
Moreover
\[
\lim_{K\to\infty}\big\|  \psi_\rho^{\psK} \un_{\{\cdot\,<n_r\psK\}}\big\|_{\ldeux}=0.
\]
\end{proposition}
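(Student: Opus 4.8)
The plan is to produce the quasi-eigenvector by discretizing an exact eigenfunction of the limiting operator $\HO$ and transporting it to $\ldeux$ through the map $P_{\sK}$ of \eqref{PP}. Since $\rho\in S_{1}$, write $\rho=\big(d'(\xf)-b'(\xf)\big)\,m$ for some $m\in\entiers$ and let $\psi_{*}=\psi_{m}$ be the $\Ldeux$-normalized eigenfunction of $\HO$ from \eqref{eigenstates-HO}, so that $\HO\psi_{*}=-\rho\,\psi_{*}$ by Remark \ref{rem:OUHO}. As a Hermite polynomial times a Gaussian, $\psi_{*}\in C^{\infty}(\real)$ and, the Gaussian factor dominating all polynomial and derivative factors, there are $a,A>0$ with $\sum_{j=0}^{3}\big|\psi_{*}^{(j)}(x)\big|\le A\,\e^{-a|x|}$ for all $x$; thus $\psi_{*}$ satisfies the hypotheses of both Proposition \ref{convS} and Lemma \ref{isomQP}. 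Put $\widetilde\psi^{\psK}:=P_{\sK}\psi_{*}$ and then $\psi_{\rho}^{\psK}:=\widetilde\psi^{\psK}/\big\|\widetilde\psi^{\psK}\big\|_{\ldeux}$; by Lemma \ref{isomQP}(ii) the normalizing constant converges to $\|\psi_{*}\|_{\Ldeux}=1$, so it suffices to establish the two displayed limits with $\widetilde\psi^{\psK}$ in place of $\psi_{\rho}^{\psK}$.

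First I would verify that $\widetilde\psi^{\psK}\in\Dom(\LKl)$ for every $K$: the pointwise bound $\big|\widetilde\psi^{\psK}(n)\big|\le K^{-1/4}\sup_{I^{\psK}_{n}}|\psi_{*}|$ shows that $\widetilde\psi^{\psK}(n)$ decays super-exponentially in $n$, while assumption \eqref{hipopo} gives $\lambdaK_{n}+\muK_{n}\le\Oun\,K\,\e^{n/K}$ for fixed $K$, so $\LKl\widetilde\psi^{\psK}\in\ldeux$ --- exactly as in the proof of Proposition \ref{prop-quasi-S2}. For the quasi-eigenvector estimate, I would use that $Q_{\sK}$ is an isometry (Lemma \ref{isometry}) together with the identity $\LKL=Q_{\sK}\LKl P_{\sK}$ and $\HO\psi_{*}=-\rho\,\psi_{*}$:
\begin{align*}
\big\|\LKl\widetilde\psi^{\psK}+\rho\,\widetilde\psi^{\psK}\big\|_{\ldeux}
&=\big\|\LKL\psi_{*}+\rho\,Q_{\sK}P_{\sK}\psi_{*}\big\|_{\Ldeux}\\
&\le\big\|\LKL\psi_{*}-\HO\psi_{*}\big\|_{\Ldeux}+\rho\,\big\|\psi_{*}-Q_{\sK}P_{\sK}\psi_{*}\big\|_{\Ldeux}.
\end{align*}
The first term tends to $0$ by Proposition \ref{convS}, the second by Lemma \ref{isomQP}(i); dividing by $\big\|\widetilde\psi^{\psK}\big\|_{\ldeux}\to1$ yields the first assertion.

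For the \emph{moreover} part, I would match the index range with the region where $\psi_{*}$ is negligible. If $n<\nr=\big\lfloor\Kxf-K^{2/3}\log K\big\rfloor$, then the support interval $I^{\psK}_{n}$ is contained in $\big(-\infty,\,-K^{1/6}\log K+1\big)$, so using once more $\big|\widetilde\psi^{\psK}(n)\big|\le K^{-1/4}\sup_{I^{\psK}_{n}}|\psi_{*}|$ and comparing the resulting sum with $\int_{-\infty}^{-K^{1/6}\log K+2}|\psi_{*}(x)|^{2}\,\dd x$, the Gaussian decay of $\psi_{*}$ gives $\big\|\widetilde\psi^{\psK}\,\un_{\{\cdot\,<\,\nr\}}\big\|_{\ldeux}^{2}\le\Oun\;\e^{-c\,K^{1/3}(\log K)^{2}}$ for some $c>0$, which tends to $0$; the same holds for $\psi_{\rho}^{\psK}$. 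Beyond the two approximation inputs already cited, the only points that require care are this domain-membership step --- where \eqref{hipopo} is used to control the growth of the rates against the decay of $\widetilde\psi^{\psK}$ --- and the elementary estimate identifying $\{n<\nr\}$ with the tail $\{x\lesssim-K^{1/6}\log K\}$ of $\psi_{*}$; neither is a serious obstacle.
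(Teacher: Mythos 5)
Your proposal is correct and follows essentially the same route as the paper: you take the (rescaled Hermite) eigenfunction $\varphi_\rho$ of $\HO$, define the quasi-eigenvector as its discretization $P_{\sK}\varphi_\rho$, and combine Proposition \ref{convS} with Lemma \ref{isomQP} and the isometry of $Q_{\sK}$ to get the quasi-eigenvalue estimate, with the Gaussian decay of $\varphi_\rho$ handling both the domain membership and the tail bound on $\{n<\nr\}$. The only difference is that you spell out the normalization, the domain check via \eqref{hipopo}, and the explicit tail estimate, which the paper leaves implicit.
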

\begin{proof}
Let $\varphi_\rho$ be a real normalized eigenvector of $\HO$ corresponding to the eigenvalue $-\rho$ (in $\Ldeux$).
Since $\varphi_\rho$ is a (rescaled) Hermite function (see \cite{babusci-et-al} or Remark \ref{rem:OUHO}),  it satisfies the hypothesis of Proposition \ref{convS}, hence
\[
\lim_{K\to\infty}\big\|\mathscr{L}_{\sK} \varphi_\rho+\rho\,\varphi_\rho\big\|_{\Ldeux}=0.
\]
Using Lemma \ref{isomQP} we get
\[
\lim_{K\to\infty}\big\|\mathscr{L}_{\sK} \varphi_\rho+\rho\;Q_{\sK}P_{\sK}\varphi_\rho\big\|_{\Ldeux}=0
\]
and then
\[
\lim_{K\to\infty}\big\|\mathcal{L}_{\sK} \, P_{\sK}\varphi_\rho+\rho\,P_{\sK}\varphi_\rho\big\|_{\ldeux}=0\;
\]
and 
\[
\lim_{K\to\infty}\big\|P_{\sK}\varphi_\rho\big\|_{\ldeux}=1.
\]
The first statement follows by letting $\psi_{\rho}^{\psK}(n)=(P_{\sK}\varphi_\rho)(n)$ where $P_{\sK}$ is defined in \eqref{PP}. 
The other statement follows from an exponential bound on the decay of $\varphi_\rho$.
\end{proof}

%%%%%%%%%%%%%%%%%%%%%%%%%%%%%% SECTION
\section{Proof of Theorems \ref{liminf} and \ref{prsquelafin}}\label{proof-maintheorem}

\subsection{Proof of Theorem \ref{liminf}}\label{proof-liminf}

The proof of Theorem \ref{liminf} is an immediate consequence of the
following two propositions.

\begin{proposition}
%\label{prop-liminf-S2}
We have  $\ S_2\subset  G$.
\end{proposition}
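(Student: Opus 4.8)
The plan is to show that every $\rho\in S_2$ is an accumulation point of the eigenvalues $(\rhoK{j})_j$ of $\LKl$ as $K\to\infty$, using the quasi-eigenvectors produced by Proposition \ref{prop-quasi-S2}. Fix $\rho\in S_2$. By Proposition \ref{prop-quasi-S2} there is a family $(v_\rho^{\psK})_K$ of unit vectors in $\Dom(\LKl)$ with
\[
\lim_{K\to\infty}\big\|\LKl v_\rho^{\psK}+\rho\, v_\rho^{\psK}\big\|_{\ldeux}=0.
\]
Since $\LKl$ is self-adjoint with discrete spectrum, the existence of an approximate eigenvector at the level $-\rho$ forces the spectrum of $\LKl$ to come close to $-\rho$: more precisely, $\mathrm{dist}\big(-\rho,\spectre(\LKl)\big)\le \|\LKl v_\rho^{\psK}+\rho\,v_\rho^{\psK}\|_{\ldeux}$, which tends to $0$. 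Hence there is a sequence $j(K)$ with $\rhoK{j(K)}\to\rho$, so $\rho$ lies in the set of accumulation points of $(\rhoK{j})$ for that index, i.e. $\rho\in G$ by the definition of $G=\bigcup_j(\rhoK{j})^{\mathrm{acc}}$.

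The one subtlety to handle carefully is that $G$ is defined as a union \emph{over a fixed index $j$} of the accumulation points of $\rhoK{j}$, whereas the approximation argument above only gives, a priori, an index $j(K)$ that may depend on $K$. So I would argue as follows. By Corollary \ref{cfini}, for every fixed $j$ the quantity $\limsup_K\rhoK{j}$ is finite; combined with the ordering $\rhoK{0}<\rhoK{1}<\cdots$, this means that on any diverging subsequence the eigenvalues below a given threshold are finitely many and stay in a bounded set. Given $\rho\in S_2$, pick any bound $M>\rho$ and let $N$ be such that $\limsup_K\rhoK{N}>M$ (such $N$ exists since the spectrum is unbounded above in absolute value for each $K$ — or, more robustly, because if all $\limsup_K\rhoK{j}\le M$ then by a diagonal argument all limit points would lie in $[0,M]$, yet $S_1$ is unbounded and, by Theorem \ref{liminf}'s $S_1$-part, contained in $G$, a contradiction once $M$ is large). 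Then among $\rhoK{0},\dots,\rhoK{N}$, at least one subsequential limit must coincide with $\rho$: otherwise there is $\delta>0$ and a diverging subsequence $(K_p)$ with every $\rhoKp{i}{p}$, $0\le i\le N$, at distance $\ge\delta$ from $\rho$; but then the unit vectors $v_\rho^{\psKp}$, being approximate eigenvectors for $-\rho$, would have to be approximately orthogonal to \emph{all} eigenvectors $\phiKp{i}{p}$ with eigenvalue within $\delta$ of $-\rho$ — of which there are none by assumption — forcing, via the spectral theorem, $\|\LKl v_\rho^{\psKp}+\rho v_\rho^{\psKp}\|_{\ldeux}\ge\delta\|v_\rho^{\psKp}\|_{\ldeux}=\delta$, contradicting Proposition \ref{prop-quasi-S2}. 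Passing to a further subsequence where the index realizing the limit $\rho$ is constant equal to some $j_0\le N$, we get $\rho\in(\rhoK{j_0})^{\mathrm{acc}}\subset G$.

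The main obstacle, then, is not the construction of the quasi-eigenvector (that is Proposition \ref{prop-quasi-S2}, which we may invoke) but the bookkeeping that turns a $K$-dependent approximating index into a fixed index, which is exactly where Corollary \ref{cfini} is used to keep the relevant part of the spectrum in a compact set. An alternative, cleaner route avoiding the diagonal extraction is to note that since $\LKl$ is self-adjoint, $\|(\LKl+\rho)^{-1}\|\ge 1/\mathrm{dist}(-\rho,\spectre(\LKl))$ is controlled by testing against $v_\rho^{\psK}$: $\|(\LKl+\rho)^{-1}\|\ge \|v_\rho^{\psK}\|/\|(\LKl+\rho)v_\rho^{\psK}\|\to\infty$, so $\mathrm{dist}(-\rho,\spectre(\LKl))\to 0$; then the finiteness statement of Corollary \ref{cfini} again guarantees that the eigenvalue closest to $-\rho$ has a \emph{bounded} index along the relevant subsequence, which after extraction is constant, giving $\rho\in G$. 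Either way, the whole proof is short once Proposition \ref{prop-quasi-S2} and Corollary \ref{cfini} are in hand, and I expect the write-up to be a few lines.
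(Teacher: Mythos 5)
Your core mechanism is the same as the paper's: take the quasi-eigenvector $v_\rho$ supplied by Proposition \ref{prop-quasi-S2} and use self-adjointness of $\LKl$ (Proposition \ref{yaspectre}, equivalently the resolvent bound you state) to force an eigenvalue within $\|\LKl v_\rho+\rho v_\rho\|_{\ldeux}\to 0$ of $-\rho$. The paper runs this as a short contradiction argument (assume $\rho\notin G$, assert that the spectrum then avoids a neighbourhood of $-\rho$ for large $K$, contradict Proposition \ref{yaspectre}) and does not spell out the point you rightly worry about, namely that $G$ is a union over \emph{fixed} indices $j$ of accumulation points, so a spectral point near $-\rho$ must still be assigned a bounded index. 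Your extra bookkeeping is therefore a more careful version of the same route, not a different one.

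There is, however, one genuine defect: your appeals to Corollary \ref{cfini} are circular, since Corollary \ref{cfini} is deduced from Theorem \ref{liminf}, which in turn rests on the present proposition. Moreover, Corollary \ref{cfini} states the wrong inequality for your purpose: it bounds $\limsup_{K}\rho^{\psK}_j$ \emph{above} for each fixed $j$, and gives no lower bound keeping eigenvalues of large index away from a compact set; in particular it cannot ``guarantee that the eigenvalue closest to $-\rho$ has a bounded index'', so your proposed alternative route at the end does not work as stated. The good news is that your main argument never actually needs Corollary \ref{cfini}: the existence of a fixed $N$ with $\limsup_K\rho^{\psK}_N>M$ follows, as you note, from the companion proposition $S_1\subset G$ (whose proof is independent of this one) together with the unboundedness of $S_1$; then, along a diverging subsequence $(K_p)$ chosen so that $\rhoKp{N}{p}>M$, the ordering $\rho^{\psK}_0<\rho^{\psK}_1<\cdots$ puts every eigenvalue of index at least $N$ above $M$, and if no $\rho^{\psK}_i$ with $i<N$ accumulates at $\rho$ you may take $\delta\le \min_i\delta_i\wedge(M-\rho)$ so that \emph{all} eigenvalues stay at distance at least $\delta$ from $\rho$ along $(K_p)$, contradicting Proposition \ref{yaspectre}. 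So your write-up becomes correct once you delete the references to Corollary \ref{cfini}, state explicitly that the subsequence is the one realizing $\rhoKp{N}{p}>M$ and that $\delta\le M-\rho$ (this is what disposes of indices above $N$, a point your phrase ``of which there are none by assumption'' currently glosses), and accept that your proof of $S_2\subset G$ then uses $S_1\subset G$ as an input, a dependence the paper's own shorter proof does not have.
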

\begin{proof}
The proof is by contradiction.
 Let $\rho\in S_2$ and assume 
$\rho\notin  G$.
Then there exists $\eta>0$ be such that for $K$ large enough, $\left[\,\rho-\eta,\rho+\eta\,\right]\cap  G=\emptyset$.
It follows from Proposition \ref{prop-quasi-S2} that there exists a normalized vector $v$ in $\ldeux$, such that $v\in\Dom(\LKl)$,  and
\[
\lim_{K\to\infty} \|\LKl v +\rho\, v\|_{\ldeux}=0.
\]
Therefore, using Proposition \ref{yaspectre}, we obtain a contradiction, hence the proof is finished.
\end{proof}

\begin{proposition}
%\label{prop-liminf-S1}
We have $\ S_1\subset G$.
\end{proposition}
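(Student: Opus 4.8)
The plan is to repeat, with the obvious modifications, the proof just given for $S_2\subset G$, using this time the quasi-eigenvectors localised near $\Kxf$ furnished by Proposition \ref{prop-quasi-S1} in place of the ones localised near the origin furnished by Proposition \ref{prop-quasi-S2}.

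Concretely, I would argue by contradiction: fix $\rho\in S_1$ and suppose $\rho\notin G$. Exactly as in the $S_2$ case, since $-\rho$ is then not an accumulation point of any of the monotonically ordered eigenvalue sequences $\big(-\rho_j^{\psK}\big)_K$, there exists $\eta>0$ such that $\mathrm{dist}\big(-\rho,\spectre(\LKl)\big)\ge\eta$ for all $K$ large enough. On the other hand, Proposition \ref{prop-quasi-S1} provides normalised vectors $\psi_\rho^{\psK}\in\Dom(\LKl)$ with
\[
\lim_{K\to\infty}\big\|\LKl\,\psi_\rho^{\psK}+\rho\,\psi_\rho^{\psK}\big\|_{\ldeux}=0 .
\]
Since $\LKl$ is self-adjoint, for any normalised $v\in\Dom(\LKl)$ one has $\mathrm{dist}\big(-\rho,\spectre(\LKl)\big)\le\big\|\LKl v+\rho\,v\big\|_{\ldeux}$, which is precisely the content of Proposition \ref{yaspectre}; applying it with $v=\psi_\rho^{\psK}$ and letting $K\to\infty$ gives $\eta\le 0$, the desired contradiction. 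Hence $\rho\in G$, and as $\rho\in S_1$ was arbitrary, $S_1\subset G$.

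I expect the only genuinely delicate point to be the same one as in the $S_2$ case: turning the hypothesis $\rho\notin G$ into an honest spectral separation $\mathrm{dist}\big(-\rho,\spectre(\LKl)\big)\ge\eta$ valid for all large $K$, since a priori the index of the eigenvalue of $\LKl$ closest to $-\rho$ might escape to infinity with $K$; this is controlled through the ordering of the eigenvalues together with Proposition \ref{yaspectre}. Everything else — in particular the construction of $\psi_\rho^{\psK}$ as $P_{\sK}$ applied to a rescaled Hermite eigenfunction of $\HO$, and the residual estimate coming from Proposition \ref{convS} and Lemma \ref{isomQP} — is already packaged in Proposition \ref{prop-quasi-S1}, so no additional computation is required.
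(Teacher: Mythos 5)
Your argument is essentially the paper's own proof: it proceeds by contradiction, takes the quasi-eigenvectors $\psi_\rho^{\psK}$ supplied by Proposition \ref{prop-quasi-S1}, and concludes via the spectral estimate of Proposition \ref{yaspectre}. The delicate point you flag (upgrading $\rho\notin G$ to a spectral separation uniform in $K$) is handled with the same brevity in the paper, so your proposal matches it step for step.
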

\begin{proof}
The proof is by contradiction.
Let $\rho\in S_1$ and assume $\rho\notin G$.
Then there exists $\eta>0$ be such that for  $K$ large enough, $[\rho-\eta,\rho+\eta]\cap G=\emptyset$.
It follows from Proposition \ref{prop-quasi-S1} that there exists a sequence of normalized vectors $(\psi^{(K)})_K\subset \ldeux$
such that $\psi^{(K)}\in\Dom(\LKl)$  and
\[
\lim_{K\to\infty}\big\| \LKl \psi^{\psK} +\rho\, \psi^{\psK}\big\|_{\ldeux}=0.
\]
Therefore, using Proposition \ref{yaspectre}, we obtain a contradiction, hence the proof is finished.
\end{proof}

\subsection{Proof of Theorem \ref{prsquelafin}}%\label{proof-prsquelafin}

The first statement follows at once from Proposition \ref{decoupe}.\\
The proof of \eqref{difsym} is by contradiction. 
Assume 
\[
\liminf_{p\to\infty}\; \min\left\{\bigg|\rhoKp{j+1}{p}-\rho_{*}\bigg|, \bigg|\rhoKp{j-1}{p}-\rho_{*}\bigg|\right\}=0.
\]
Assume $\rho_*\in S_1\backslash S_2$ and let $(p_\ell)$  be a diverging sequence of positive integers such that
$\rho_i^{(\sK_{p_\ell})}\to \rho_*$ and $\rho_{i+1}^{(\sK_{p_\ell})}\to \rho_*$ (where $i=j$ or $i=j-1$).
Let $\phi_i^{(\sK_{p_\ell})}$ be a normalized eigenvector of $\mathcal{L}_{K_{p_{\ell}}}$ corresponding to the eigenvalue $-\rho_i^{(\sK_{p_\ell})}$.
We define $\phi^{(\sK_{p_\ell})}_{i+1}$ similarly.
We claim that
\[
\limsup_{\ell\to+\infty} \Big\|\phiKp{i}{p_\ell}\,\un_{\{\cdot\,<\,n_l(\sK_{p_\ell})\}}\Big\|_{\ldeux}=0.
\]
Otherwise Proposition \ref{decoupe} would imply that $\rho_*\in S_2$, a contradiction.
By Proposition \ref{strucvp} we have
\[
\lim_{\ell\to+\infty} \Big\|\phiKp{i}{p_\ell}\,\un_{\{\cdot\,>\,n_r(\sK_{p_\ell})\}}\Big\|_{\ldeux}=1.
\]
This implies 1-a). \\
By a similar argument, we have 
\[
\lim_{\ell\to+\infty} \Big\|\phiKp{i+1}{p_\ell}\,\un_{\{\cdot\,>\,n_r(\sK_{p_\ell})\}}\Big\|_{\ldeux}=1.
\]
By Proposition \ref{decoupe}, there exists a diverging sequence of integers $(\ell_r)$ such that
\[
\Big\| Q_{K_{p_{\ell_r}}}\Big(\phi_i^{(K_{p_{\ell_r}})}\,\un_{\{\cdot\,>\,n_r(\sK_{p_{\ell_r}})\}}\Big)-\varphi_*\Big\|_{\Ldeux}\to 0
\]
where $\varphi_*\in\Dom(\HO)$ is a normalized eigenfunction of $\HO$ corresponding to the eigenvalue $-\rho_*$.
By Proposition \ref{decoupe} again, there exists a diverging sequence of integers $(r_s)$ such that
\[
\Big\| Q_{K_{p_{\ell_{r_s}}}}\Big(\phi_{i+1}^{(K_{p_{\ell_{r_s}}})}\,\un_{\{\cdot\,>\,n_r(\sK_{p_{\ell_{r_s}}})\}}\Big)-\varphi'_*\Big\|_{\Ldeux}\to 0
\]
where $\varphi'_*\in\Dom(\HO)$ is a normalized eigenfunction of $\HO$ corresponding to the eigenvalue $-\rho_*$.
Since $\phi_{i}^{(K_{p_{\ell_{r_s}}})}$ and $\phi_{i+1}^{(K_{p_{\ell_{r_s}}})}$ are orthogonal in $\ldeux$, it follows from the previous estimates that
\[
\lim_{s\to+\infty}
\left\langle \phi_{i}^{(K_{p_{\ell_{r_s}}})}\,\un_{\{\cdot\,>\,n_r(\sK_{p_{\ell_{r_s}}})\}},\phi_{i+1}^{(K_{p_{\ell_{r_s}}})}\,\un_{\{\cdot\,>\,n_r(\sK_{p_{\ell_{r_s}}})\}}\right\rangle_{\ldeux}=0.
\]
By Lemma \ref{isometry} we have
\[
\left\langle \!Q_{K_{p_{\ell_{r_s}}}}\!\!\Big(\phi_{i}^{(K_{p_{\ell_{r_s}}})}\un_{\{\cdot\,>\,n_r(\sK_{p_{\ell_{r_s}}})\}}\Big),
Q_{K_{p_{\ell_{r_s}}}}\!\!\Big(\phi_{i+1}^{(K_{p_{\ell_{r_s}}})}\un_{\{\cdot\,>\,n_r(\sK_{p_{\ell_{r_s}}})\}}\Big)\!\right\rangle_{\!\!\Ldeux}
\xrightarrow[s\to\infty]{}0.
\]
In other words, $\langle \varphi_*,\varphi'_*\rangle_{\Ldeux}=0$. This
is a contradiction since $\varphi_*$ and $\varphi'_*$ are normalized eigenfunctions of $\HO$ corresponding to the same
eigenvalue $-\rho_*$ which is simple. 

The case $\rho_*\in S_2\backslash S_1$ is similar (using again Proposition \ref{decoupe}), so it is left to the reader.

\smallskip
Let us now assume that $\rho_* \in S_{1}\cap S_{2}$. We now prove \eqref{madmin} by contradiction. So we assume that there exists $\delta>0$ such that
\[
\liminf_{p\to\infty}\; \min\left\{\bigg|\rhoKp{j+1}{p}-\rho_{*}\bigg|, \bigg|\rhoKp{j-1}{p}-\rho_{*}\bigg|\right\}>\delta.
\]
Since $\rho_* \in S_{1}$ and from Proposition \ref{prop-quasi-S1}, there exists a sequence of normalized vectors $(\psi_{\rho_*}^{(K_p)})_p\subset \ldeux$
such that $\psi_{\rho_*}^{(K_p)}\in\Dom(\LKlp)$ for all $p$, and
\[
\lim_{p\to\infty}\Big\| \LKlp \psi_{\rho_*}^{(K_p)} -\rho_* \psi_{\rho_*}^{(K_p)}\Big\|_{\ldeux}=0.
\]
We also have (since $\rho^{(\sK_p)}_j \to \rho_*$) that
\[
\lim_{p\to\infty}\Big\| \LKlp \psi_{\rho_*}^{(K_p)} -\rho^{(\sK_p)}_j \psi_{\rho_*}^{(K_p)}\Big\|_{\ldeux}=0.
\]
For each $p$, let $\phi^{(\sK_p)}_j$ be a normalized eigenvector of $\LKlp$ corresponding to the eigenvalue $-\rho^{(\sK_p)}_j$. By using Lemma \ref{approxvp} with $A=\LKlp$, we deduce that there exists a sequence of real numbers $(\theta_p)_p$ such that 
\begin{equation}
\label{pierre1}
\lim_{p\to\infty}\Big\|  \psi_{\rho_*}^{(K_p)} -\e^{\ic \theta_p}\phi^{(\sK_p)}_j\Big\|_{\ldeux}=0.
\end{equation}
Since $\rho_* \in S_{2}$ and from Proposition \ref{prop-quasi-S2}, there exists a normalized vector $v_{\rho_*}\in\Dom(\mathcal{L}_{\sK_p})$ for all $p$  such that
\[
\lim_{p\to\infty} \|\LKlp v_{\rho_*} +\rho_* v_{\rho_*}\|_{\ldeux}=0.
\]
We also have
\[
\lim_{p\to\infty}\Big\| \LKlp v_{\rho_*} +\rho^{(\sK_p)}_j v_{\rho_*}\Big\|_{\ldeux}=0.
\]
By using Lemma \ref{approxvp} with $A=\LKlp$, we deduce that there exists a sequence of real numbers $(\theta'_p)_p$ such that 
\begin{equation}
\label{pierre2}
\lim_{p\to\infty}\Big\| v_{\rho_*} -\e^{\ic \theta'_p} \phi^{(\sK_p)}_j\Big\|_{\ldeux}=0.
\end{equation}
Moreover, from Propositions \ref{prop-quasi-S1} and \ref{prop-quasi-S2}, it follows that
\[
\lim_{p\to\infty}\Big\|  \psi_{\rho_*}^{(K_p)} \un_{\{\cdot\,<\,n_r(\sK_p)\}}\Big\|_{\ldeux}=0\quad\text{and}\quad
\lim_{K\to+\infty} \big\| v_{\rho_*} \un_{\{\cdot\,>\,n_l(\sK)\}}\big\|_{\ldeux}=0
\]
which implies  (using Proposition \ref{strucvp}) that 
\[
\lim_{p\to\infty}\langle  \psi_{\rho_*}^{(K_p)},v_{\rho_*}\rangle=0.
\]
This is a contradiction  with \eqref{pierre1} and  \eqref{pierre2}.
Finally, we prove \eqref{madmax} by contradiction. So we assume that there exists a diverging sequence of integers $(p_\ell)$ such that 
\[
\lim_{\ell\to+\infty}\Big|\,\rhoKp{j+1}{p_\ell}-\rho_{*}\Big|+ \Big|\,\rhoKp{j-1}{p_\ell}-\rho_{*}\Big|=0.
\]
We now apply Proposition \ref{decoupe} with $j$ and $K_{p_\ell}$.
Hence there exists a diverging sequence of integers $(\ell_s)$  such that 
\[
\phiKp{j}{p_{\ell_s}}\,\un_{\{\cdot\,<\,n_l(\sK_{p_{\ell_s}})\}}\xrightarrow[]{\ldeux} \phi_{*,1}
\]
and
\[
Q_{K_{p_{\ell_s}}}\phiKp{j}{p_{\ell_s}}\,\un_{\{\cdot\,\ge\, n_r(\sK_{p_{\ell_s}})\}} \xrightarrow[]{\Ldeux} \varphi_{*,1} 
\]
and we have $\|\phi_{*,1}\|_{\ldeux}^2+\|\varphi_{*,1}\|_{\Ldeux}^2=1$.

We now apply Proposition \ref{decoupe} with $j-1$ and $K_{p_{\ell_s}}$.
Hence there exists a diverging sequence of integers $(s_r)$  such that 
\[
\phiKp{j-1}{p_{\ell_{s_r}}}\,\un_{\{\cdot\,<\,n_l(\sK_{p_{\ell_{s_r}}})\}}\xrightarrow[]{\ldeux} \phi_{*,2}
\]
and
\[
Q_{K_{p_{\ell_{s_r}}}}\phiKp{j-1}{p_{\ell_{s_r}}}\,\un_{\{\cdot\,\ge\, n_r(\sK_{p_{\ell_{s_r}}})\}} \xrightarrow[]{\Ldeux} \varphi_{*,2} 
\]
and we have $\|\phi_{*,2}\|_{\ldeux}^2+\|\varphi_{*,2}\|_{\Ldeux}^2=1$.

We now apply Proposition \ref{decoupe} with $j+1$ and $K_{p_{\ell_{s_r}}}$.
Hence there exists a diverging sequence of integers $(r_q)$  such that 
\[
\phiKp{j+1}{p_{\ell_{s_{r_q}}}}\,\un_{\{\cdot\,<\,n_l(\sK_{p_{\ell_{s_{r_q}}}})\}}\xrightarrow[]{\ldeux} \phi_{*,3}
\]
and
\[
Q_{K_{p_{\ell_{s_{r_q}}}}}\phiKp{j+1}{p_{\ell_{s_{r_q}}}}\,\un_{\{\cdot\,\ge\, n_r(\sK_{p_{\ell_{s_{r_q}}}})\}} \xrightarrow[]{\Ldeux} \varphi_{*,3} 
\]
and we have $\|\phi_{*,3}\|_{\ldeux}^2+\|\varphi_{*,3}\|_{\Ldeux}^2=1$.

Moreover, since $\phiKp{j-1}{p_{\ell_{s_{r_q}}}}, \phiKp{j}{p_{\ell_{s_{r_q}}}}, \phiKp{j+1}{p_{\ell_{s_{r_q}}}}$ are pairwise orthogonal, we have
\begin{equation}\label{lestrois}
\left\langle \phi_{*,m}, \phi_{*,m'}\right\rangle_{\ldeux}
+
\left\langle \varphi_{*,m}, \varphi_{*,m'}\right\rangle_{\Ldeux}
=0,\quad\forall m\neq m'.
\end{equation}
The linear subspace of $\ldeux$ spanned by $\phi_{*,1}, \phi_{*,2}, \phi_{*,3}$ is of dimension at most one because they are eigenvectors of 
$\PO$ for the same simple eigenvalue $-\rho_*$.
The linear subspace of $\Ldeux$ spanned by $\varphi_{*,1}, \varphi_{*,2}, \varphi_{*,3}$ is of dimension at most one because they are eigenfunctions of 
$\HO$ for the same simple eigenvalue $-\rho_*$.
Therefore the subspace of $\ldeux\oplus \Ldeux$ spanned by the three vectors  $(\phi_{*,1},\varphi_{*,1})$,  $(\phi_{*,2},\varphi_{*,2})$,
$(\phi_{*,3},\varphi_{*,3})$ is of dimension at most two. However, these three vectors are normalized and pairwise orthogonal by \eqref{lestrois}.
We thus arrive at a contradiction.

%%%%%%%%%%%%%%%%%%%%%%%%%%%%%% SECTION
%\section{}\label{}

\section{Fr\'echet-Kolmogorov-Riesz compactness criterion and Dirichlet form}\label{auxiliaires}

\subsection{Fr\'echet-Kolmogorov-Riesz compactness criterion}

We recall the  Fr\'echet-Kolmogorov-Riesz compactness criterion in $\Ldeux$.

\begin{theorem}\label{KFR}
Let $(f_{p})_p$ be a normalized sequence in $\Ldeux$ such that the following two conditions are satisfied.
\begin{enumerate}[(i)]
\item
There exists $\epsilon_0>0$ such that there exists a function $R(\epsilon)>0$ on $(0,\epsilon_0)$ such that 
\[
\sup_{p}\int_{\{|x|>R(\epsilon)\}}\big|f_{p}(x)\big|^{2}\dd x\le \epsilon.
\]
\item
There exits a positive function $\alpha$ on $]0,1]$  satisfying 
\[
\lim_{y\searrow\, 0}\alpha(y)=0
\]
and such that for any $p$ and $y\in \left]-1,1\right]$
\[
\int\big|f_{p}(x+y)-f_{p}(x)\big|^{2}\dd x\le \alpha(|y|).
\]
\end{enumerate}
Then one can extract from $(f_{p})_p$ a convergent subsequence  in $\Ldeux$.
\end{theorem}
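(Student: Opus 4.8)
The plan is to follow the classical proof of the Fr\'echet-Kolmogorov-Riesz theorem: regularize each $f_p$ by convolution with a mollifier, use hypothesis (ii) to make the regularization error small \emph{uniformly in $p$}, use hypothesis (i) to reduce to a bounded interval, and on that interval apply the Arzel\`a-Ascoli theorem to the regularized functions, which are uniformly bounded and equicontinuous there; finally glue everything together by a diagonal extraction.

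Concretely, fix a mollifier $\chi\in\mathscr{D}$, non-negative, supported in $[-1,1]$, with $\int\chi=1$, and set $\chi_\delta(x)=\delta^{-1}\chi(x/\delta)$ for $\delta\in(0,1]$. First I would estimate, applying the Cauchy-Schwarz inequality in the $y$-variable (the weight $\chi_\delta$ having total mass one) and then Fubini,
\[
\big\|f_p*\chi_\delta-f_p\big\|_{\Ldeux}^2\le\int\chi_\delta(y)\,\big\|f_p(\cdot-y)-f_p\big\|_{\Ldeux}^2\,\dd y\le\sup_{|y|\le\delta}\alpha(|y|),
\]
which by hypothesis (ii) tends to $0$ as $\delta\searrow 0$, with a bound independent of $p$. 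For the truncation, hypothesis (i) gives $\int_{\{|x|>R(\epsilon)\}}|f_p|^2\le\epsilon$; since $\chi_\delta$ is supported in $[-1,1]$, the same pointwise Cauchy-Schwarz bound yields $\int_{\{|x|>R(\epsilon)+1\}}|f_p*\chi_\delta|^2\le\int_{\{|z|>R(\epsilon)\}}|f_p|^2\le\epsilon$ for every $p$ and every $\delta\in(0,1]$. On the compact interval $[-R(\epsilon)-1,\,R(\epsilon)+1]$, using $\|f_p\|_{\Ldeux}=1$ we have the uniform bound $|(f_p*\chi_\delta)(x)|\le\|\chi_\delta\|_{\Ldeux}$ and the equicontinuity bound $|(f_p*\chi_\delta)(x)-(f_p*\chi_\delta)(x')|\le\big\|\chi_\delta(\cdot-x)-\chi_\delta(\cdot-x')\big\|_{\Ldeux}$, whose right-hand side is independent of $p$ and vanishes as $x'\to x$. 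Hence, for each fixed $\delta$, the Arzel\`a-Ascoli theorem produces a subsequence of $(f_p*\chi_\delta)_p$ converging uniformly on that interval, hence in $L^2$ of that interval, and therefore --- combined with the tail estimate above --- in $\Ldeux$.

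The last step is a diagonal argument. For each $k\ge1$ with $1/k^2<\epsilon_0$, choose $\delta_k\in(0,1]$ with $\sup_{|y|\le\delta_k}\alpha(|y|)<1/k^2$, so that $\|f_p*\chi_{\delta_k}-f_p\|_{\Ldeux}<1/k$ for all $p$, and put $R_k=R(1/k^2)$. Starting from $(f_p)_p$, extract successively nested subsequences along which $(f_p*\chi_{\delta_k})_p$ converges in $\Ldeux$ for $k=1,2,\dots$ (possible by the previous paragraph), then take the diagonal subsequence $(f_{p_j})_j$. For any such $k$ and any indices $i,j$,
\[
\big\|f_{p_i}-f_{p_j}\big\|_{\Ldeux}\le\frac{2}{k}+\big\|f_{p_i}*\chi_{\delta_k}-f_{p_j}*\chi_{\delta_k}\big\|_{\Ldeux},
\]
so, given $\eta>0$, choosing $k>4/\eta$ and then $i,j$ large forces the right-hand side below $\eta$; thus $(f_{p_j})_j$ is Cauchy in $\Ldeux$ and converges. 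The only genuinely delicate point is the regularization estimate being uniform in $p$, and that is immediate from hypothesis (ii) together with the Cauchy-Schwarz inequality; the truncation and the diagonalization are then routine bookkeeping.
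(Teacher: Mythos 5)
Your proof is correct. Note that the paper does not actually prove Theorem \ref{KFR}: it simply refers to Remark 5, p.~387 of \cite{HH}, where the Kolmogorov--Riesz theorem is established by showing total boundedness, approximating the functions by their averages over a fine partition of $[-R,R]$ (a finite-dimensional projection) and using the translation condition to control the error. Your route --- mollification to gain equicontinuity, Arzel\`a--Ascoli on a compact interval, tail control from (i), and a diagonal extraction --- is the other classical proof; it delivers a convergent subsequence directly rather than via total boundedness plus completeness, at the cost of somewhat more bookkeeping. All the estimates you use (the Jensen/Cauchy--Schwarz bound $\|f_p*\chi_\delta-f_p\|_{\Ldeux}^2\le\sup_{0<|y|\le\delta}\alpha(|y|)$, the tail bound for $f_p*\chi_\delta$ outside $\{|x|\le R(\epsilon)+1\}$, the uniform bound and equicontinuity of $f_p*\chi_\delta$ through $\|\chi_\delta\|_{\Ldeux}$) are valid and uniform in $p$, and the final Cauchy argument with $k>4/\eta$ is sound.

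One small point to tighten, which is a compression rather than a genuine gap: in the middle paragraph you claim that for each \emph{fixed} $\delta$ a subsequence of $(f_p*\chi_\delta)_p$ converges in $\Ldeux$, but the argument as written uses Arzel\`a--Ascoli on the single interval $[-R(\epsilon)-1,R(\epsilon)+1]$ attached to one fixed $\epsilon$, and the tail estimate only bounds the mass outside by $\epsilon$; this gives convergence up to an error of order $\sqrt{\epsilon}$, not convergence in $\Ldeux$. Either insert an additional diagonal extraction over $\epsilon=1/m$ (expanding intervals $[-R(1/m)-1,R(1/m)+1]$) to justify that claim, or dispense with it altogether: in your last step it suffices to extract, for each $k$, a subsequence along which $f_p*\chi_{\delta_k}$ converges uniformly on $[-R_k-1,R_k+1]$, since then $\|f_{p_i}*\chi_{\delta_k}-f_{p_j}*\chi_{\delta_k}\|_{\Ldeux}$ is bounded by the uniform difference on that interval times $(2R_k+2)^{1/2}$ plus $2/k$ from the tail bound at level $1/k^2$, which is all the Cauchy estimate needs.
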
 We refer to Remark \textbf{5} on page 387 in \cite{HH}. The following
Lemmas provide expressions for $R(\epsilon)$ and $\alpha(y)$ in our case. Recall that the potential $\Vn$ and $n_{r}(K)$ have been defined in  Section 4 (see \eqref{potential}).

\begin{lemma}\label{unifint}
Let $C>0$.
Let $\mathscr{F}_{C,\,K}$ be the set of normalized sequences $(\phi(n))_n$ in $\ldeux$ such that $\phi(n)=0$ for any $n<n_{r}(K)$ and 
\[
\sum_{n=n_{r}(K)}^{\infty}\big(1\vee \Vn\big)\big)\,\phi^{2}(n)\le C\;.
\]
Then, for any $\phi\in\mathscr{F}_{C,K} $, the function $Q_{\sK}\phi(x)$ satisfies condition \textup{(i)} in Theorem \ref{KFR} with 
\[
R(\epsilon)=\frac{C}{\epsilon}
\]
for any $0<\epsilon<1$.
\end{lemma}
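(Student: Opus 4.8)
The plan is to compute the left-hand side of condition (i) explicitly from the disjointness of the supports of the functions $\eK{n}$, reduce it to a tail sum of $\phi^2(n)$, and then dominate that tail sum by a Chebyshev-type inequality against the weighted constraint $\sum_n\big(1\vee\Vn\big)\phi^2(n)\le C$. The only non-routine ingredient is a lower bound on the potential $\Vn$ far from $\Kxf$.

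First I would record that the intervals $\big(I^{\psK}_n\big)_n$ partition $\real$, each $I^{\psK}_n$ having length $K^{-1/2}$ and midpoint $x_n:=(n-\Kxf)/\sqrt K$, and that $\eK{n}$ equals $K^{1/4}$ on $I^{\psK}_n$ and $0$ elsewhere; hence $Q_{\sK}\phi$ takes the value $K^{1/4}\phi(n)$ on $I^{\psK}_n$. Since $K^{1/2}\big|I^{\psK}_n\big|=1$ and $\phi(n)=0$ for $n<\nr$ by hypothesis, it follows that for any $R>0$
\[
\int_{\{|x|>R\}}\big|Q_{\sK}\phi(x)\big|^2\,\dd x
=\sum_{n\ge 1}\phi^2(n)\,K^{1/2}\,\big|I^{\psK}_n\cap\{|x|>R\}\big|
\le\sum_{\substack{n\ge\nr\\[1pt] |x_n|>R-\frac{1}{2\sqrt K}}}\phi^2(n).
\]
So it remains to show this tail sum is at most $\epsilon$ when $R=C/\epsilon$.

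The key estimate is that there exist $c_\star>0$ and $K_0$ (depending only on $b,d$) such that for $K\ge K_0$ and every $n\ge\nr$,
\[
1\vee\Vn\;\ge\;c_\star\,|x_n|\;=\;c_\star\,\frac{|n-\Kxf|}{\sqrt K},
\]
the multiplicative constant being harmlessly absorbable into $C$. To prove it I would split according to the location of $n/K$. For $|n-\Kxf|\le\delta K$ with $\delta$ small, the Taylor expansions of $b$ and $d$ at $\xf$ used in the heuristics give, after the same computation as in Proposition \ref{convS}, $\Vn=\tfrac{(d'(\xf)-b'(\xf))^2}{4b(\xf)}\,x_n^2\,(1+o(1))$ uniformly, which dominates $|x_n|$ once $|x_n|$ is large while the bounded range of $|x_n|$ is absorbed by ``$1\vee$''. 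For $n/K$ in a compact set bounded away from $\xf$ (and from $0$), $\Vn/K$ is bounded below by a positive constant by continuity and $d>b$ there, so $\Vn\ge cK\gg|x_n|$ for $K$ large. For $n/K$ large, the assumption $b(x)/d(x)\to 0$ gives $\Vn\ge\tfrac12\muK_n=\tfrac12 Kd(n/K)$, and monotonicity of $d(x)/x$ gives $\muK_n\ge c\,n\ge|x_n|$ once $K\ge K_0$. This step, being the one place where all the structural assumptions on $b,d$ and the choice of $\nr$ enter, is the main obstacle; the rest is bookkeeping.

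Granting the potential bound, Markov's inequality finishes the proof:
\[
\sum_{\substack{n\ge\nr\\[1pt] |x_n|>R-\frac{1}{2\sqrt K}}}\phi^2(n)
\;\le\;\frac{1}{c_\star\big(R-\tfrac{1}{2\sqrt K}\big)}\sum_{n\ge\nr}\big(1\vee\Vn\big)\,\phi^2(n)
\;\le\;\frac{C}{c_\star\big(R-\tfrac{1}{2\sqrt K}\big)},
\]
which is $\le\epsilon$ once $R=C/\epsilon$ (for the finitely many small $K$, or to dispose of the $\tfrac12K^{-1/2}$ and the factor $c_\star$, one simply redefines $C$ or slightly enlarges $R(\epsilon)$, all uniformly). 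This establishes condition (i) with $R(\epsilon)=C/\epsilon$.
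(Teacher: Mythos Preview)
Your proposal is correct and follows essentially the same approach as the paper: reduce the $L^2$-tail to a tail sum $\sum_{|n-\Kxf|>L}\phi^2(n)$, establish a lower bound on $1\vee\Vn$ away from $\Kxf$, and finish with a Chebyshev/Markov inequality. The only difference is packaging: the paper keeps two separate potential bounds --- a quadratic one $1\vee\Vn\ge C_2(n-\Kxf)^2/K$ for $n$ up to some $m_*(K)\asymp K$, and a linear one $1\vee\Vn\ge C_2\,n$ for $n>m_*(K)$ --- and then splits the Chebyshev estimate accordingly, whereas you merge your three-region analysis into a single inequality $1\vee\Vn\ge c_\star|x_n|$. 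Both routes give the same $C/R$ decay; your unified bound is cleaner to state, while the paper's quadratic bound near $\xf$ is sharper but not needed here. Your handwave about absorbing $c_\star$ and the $\tfrac{1}{2\sqrt K}$ shift into $R(\epsilon)$ is harmless and matches the paper's own use of thresholds $R_0,K_0$.
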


\begin{proof}
We are going to prove that there exist $R_{0}>1$ and  $K_{0}>4$ such that for any $K>K_{0}$, any $R>R_{0}$ and any $\phi\in \mathscr{F}_{C,K}$
\[
\int_{\{|x|>R\}}\big(Q_{\sK}\phi(x)\big)^{2}\dd x\le \frac{C}{R}.
\]
We observe that
\[
\int_{\{|x|>R\}}\big(Q_{\sK}\phi(x)\big)^{2}\dd x\le \sum_{n:|n-\Kxf|>R\,\sqrt{K}-1}\phi^{2}(n).
\]
Our aim is to prove that the right-hand side of the above inequality is bounded above by $C/R$. 

It follows from the hypotheses on $\lambdaK_{n}$ and
$\muK_{n}$  that there exist constants $K_{0}>4$, $1>C_{2}>0$, $\Gamma>0$ and $\zeta>0$, such that for any $K>K_{0}$ there exists an integer
$\Gamma\, K>\ms>2\,\Kxf$ (hence of order $K$) such that  $\muK_{n}>\zeta\,n$ for any $n\ge\ms$ and 
\begin{align*}
\sum_{n=n_{r}(k)}^{\ms}(1\vee\Vn)\;\phi^{2}(n)
&\ge C_{2}\, K^{-1}\sum_{n=n_{r}(K)}^{\ms}(n-\Kxf)^{2} \phi^{2}(n)\\
\sum_{n=\ms+1}^{\infty}(1\vee\Vn)\;\phi^{2}(n)
& \ge C_{2}\, \sum_{n=\ms+1}^{\infty}n\,\phi^{2}(n)\;.
\end{align*}
These estimates imply the following bounds for any integer $L>0$ 
\begin{align}
\label{zone2}
\sum_{\{n_{r}(K)\leq n\leq \ms\}\cap \{|n-\Kxf|>L\}}\phi^{2}(n) & \le \frac{K C}{C_{2}\, L^{2}}\, \un_{\{L<\ms\}}\\
\sum_{\{n>\ms\}\cap \{|n-\Kxf|>L\}}\phi^{2}(n) & \le \frac{C}{C_{2}(L\vee \ms)}
\label{zone3}\;.
\end{align}
We now replace $L$ with $R\,\sqrt K-1$ in the above estimates.

Let $K>4$ be fixed. We distinguish two cases according to the value of $R$. 
\begin{enumerate}[1)]
\item  
$1\le R<\ms/\sqrt K$. Then $\sqrt{K}-1\le L<\ms\le \Gamma K$. Since $L=R\sqrt{K}-1>R\sqrt{K}/2$ (because $K>4$), we have
\[
\sum_{\{|n-\Kxf|>R\,\sqrt{K}-1\}}\phi^{2}(n)\le \frac{4\,\,C}{R^{2}}+\frac{C}{C_{2}\ms}
\le  \frac{4C}{C_{2}R^{2}}+ \frac{C\, \Gamma}{C_{2}R^{2}}\le \frac{C}{R}
\]
if $R>C_{2}^{-1}\big(4+\,\Gamma)$.
\item $R\ge \ms/\sqrt K$. Then $L\ge \ms$. We get
\[
\sum_{\{|n-\Kxf|>R\,\sqrt{K}-1\}}\phi^{2}(n)\le
\frac{C}{C_{2}L}\le \frac{2C}{R\,C_{2}\sqrt{K}}\le \frac{C}{R}
\]
if $C_{2}^{2} K>4$.
\end{enumerate}
We define $K_{0}=5+4\,C_{2}^{-2}$ and $R_{0}=1+C_{2}^{-1}\big(4+\,\Gamma)$.
The result follows.
\end{proof}

\begin{lemma}\label{regu}
Let $(\phi^{{\scriptscriptstyle (K)}})_K$ be a sequence of normalized elements of $\ldeux$ such that $\phiK{(n)}=0$ for $n\le n_{r}(K)$.
Assume also that there exists $C>0$ such that 
\begin{align*}
\MoveEqLeft[10] 
\sup_{K>1} \left\{\sum_{n=n_{r}\psK}^{\infty}\sqrt{\lambdaK_{n}\muK_{n+1}} \big(\phiK{(n+1)}-\phiK{(n)}\big)^{2} \right.\\
& \left. +\sum_{n=n_{r}\psK}^{\infty}(1\vee\Vn)\,\big(\phiK{(n)}\big)^{2}\right\}
\le C.
\end{align*}
Then there exits a positive constant $\tilde C$ such that for any $|h|\le 1$ and any $K>1$
\[
\int \big[Q_{\sK}\phiK{}(x+h)-Q_{\sK}\phiK{}(x)\big]^{2}\dd x\le\alpha(h)=\tilde C\, |h|\,. 
\]
Hence,  for any $(\phiK{})_K$ satisfying the above assumptions, the sequence of functions $(Q_{\sK}\phiK{})_K$ satisfies condition \textit{ii)} in Theorem \ref{KFR} with 
\[
\alpha(y)=\tilde C y\;.
\]
\end{lemma}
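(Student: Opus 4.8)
# Proof plan for Lemma \ref{regu}

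The plan is to estimate the $L^2$ norm of the translate difference $Q_{\sK}\phi^{\psK}(\cdot+h)-Q_{\sK}\phi^{\psK}(\cdot)$ directly in terms of the discrete Dirichlet form appearing in the hypothesis. Since $Q_{\sK}\phi^{\psK}$ is the piecewise-constant function equal to $K^{1/4}\phi^{\psK}(n)$ on the interval $I^{\psK}_n$ of length $K^{-1/2}$, a shift by $h$ moves each such plateau by $h$, and the integrand $[Q_{\sK}\phi^{\psK}(x+h)-Q_{\sK}\phi^{\psK}(x)]^2$ is supported on the union of the boundary strips of width $|h|$ near each breakpoint $(n+0.5)/\sqrt{K}-\xf\sqrt{K}$. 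On such a strip the value of the difference is $K^{1/4}(\phi^{\psK}(n+m)-\phi^{\psK}(n))$ for some integer shift $m$ with $|m|\le \lceil |h|\sqrt{K}\rceil$. First I would reduce to the case $|h|\le K^{-1/2}$ (so that $m\in\{-1,0,1\}$): for larger $h$, write the translation by $h$ as a composition of at most $\lceil |h|\sqrt K\rceil$ translations by steps of size $\le K^{-1/2}$ and use the triangle inequality in $L^2$ together with the elementary inequality $(\sum_{i=1}^{N}a_i)^2\le N\sum a_i^2$; this produces a factor $|h|\sqrt K$ times the single-step bound, and the single-step bound will carry a compensating factor $K^{-1/2}$, so the product is $O(|h|)$ as required.

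For the single-step estimate ($|h|\le K^{-1/2}$), the difference $Q_{\sK}\phi^{\psK}(x+h)-Q_{\sK}\phi^{\psK}(x)$ is nonzero only on a set of measure $\le |h|$ inside each interval $I^{\psK}_n$, and there it equals $\pm K^{1/4}(\phi^{\psK}(n+1)-\phi^{\psK}(n))$ (or involves $\phi^{\psK}(n-1)$, handled symmetrically). Summing over $n\ge n_r(K)$,
\[
\int\big[Q_{\sK}\phi^{\psK}(x+h)-Q_{\sK}\phi^{\psK}(x)\big]^2\dd x
\le |h|\sqrt K\sum_{n\ge n_r(K)}\big(\phi^{\psK}(n+1)-\phi^{\psK}(n)\big)^2.
\]
Now I would insert the weight $\sqrt{\lambdaK_n\muK_{n+1}}$: on the range $n\ge n_r(K)=\lfloor K\xf - K^{2/3}\log K\rfloor$ one has, by the assumptions on $b$ and $d$ (positivity and monotonicity of $b(x)/x$, $d(x)/x$, together with $b(\xf)=d(\xf)>0$), a lower bound $\sqrt{\lambdaK_n\muK_{n+1}}\ge c K$ for $n$ in a macroscopic window around $K\xf$, and more generally $\sqrt{\lambdaK_n\muK_{n+1}}\ge c\,n$ for all large $n$ (since $b(x)/x$ and $d(x)/x$ are bounded below on any neighbourhood of $0$ inside $[\xf/2,\infty)$... actually on $[1/K,\infty)$ they are bounded below by their value at the left endpoint, which is bounded below uniformly in $K$). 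Hence $\sum_{n\ge n_r(K)}(\phi^{\psK}(n+1)-\phi^{\psK}(n))^2\le (cK)^{-1}\sum_{n\ge n_r(K)}\sqrt{\lambdaK_n\muK_{n+1}}(\phi^{\psK}(n+1)-\phi^{\psK}(n))^2\le C/(cK)$, and combining with the displayed inequality gives the bound $\tilde C|h|\sqrt K\cdot K^{-1}=\tilde C|h|K^{-1/2}\le\tilde C|h|$ for the single-step case; reassembling the $\lceil|h|\sqrt K\rceil$ steps as above yields $\le \tilde C|h|$ in general. Taking $\alpha(y)=\tilde C y$ then verifies condition (ii) of Theorem \ref{KFR}, since $\alpha(y)\to 0$ as $y\searrow 0$.

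The main obstacle is the bookkeeping for $|h|>K^{-1/2}$: one must be careful that splitting a large translation into $\sim |h|\sqrt K$ unit steps costs a factor $|h|\sqrt K$ from Cauchy–Schwarz, which is exactly cancelled by the factor $K^{-1/2}$ gained per step from the Dirichlet-form lower bound $\sqrt{\lambdaK_n\muK_{n+1}}\gtrsim K$ — but this cancellation is only clean in the macroscopic window near $K\xf$; away from $K\xf$ one has merely $\sqrt{\lambdaK_n\muK_{n+1}}\gtrsim n$, so one should either argue separately on the two ranges $n\asymp K$ and $n\gg K$, or simply note that the second range contributes an even smaller amount because there $n\gg K$ makes the weight larger still. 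A secondary technical point is the treatment of the strips straddling the breakpoints when $h$ is negative versus positive and the boundary term at $n=n_r(K)$ (where $\phi^{\psK}(n_r(K))=0$ by hypothesis, so no extra term appears), both of which are routine sign/indexing checks.
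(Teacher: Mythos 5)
Your proposal is correct and follows essentially the same route as the paper: a single-step estimate for $|h|\le K^{-1/2}$ reduced to the discrete Dirichlet form via the uniform lower bound $\sqrt{\lambdaK_n\muK_{n+1}}\geq \zeta K$ on $\{n\geq n_r(K)\}$ (which, by monotonicity of $b$ and $d$, holds on the whole range, so your worry about splitting $n\asymp K$ from $n\gg K$ evaporates), followed by composing $\sim |h|\sqrt K$ elementary shifts for larger $h$. The only cosmetic differences are that you obtain the single-step bound by a direct support-measure identity rather than the paper's expansion of the square plus Cauchy--Schwarz, and you handle the composition by the triangle inequality with $(\sum_i a_i)^2\le N\sum_i a_i^2$ instead of the paper's exact telescoping via $\eK{n}(x+j/\sqrt K)=\eK{n-j}(x)$; both give the required $\tilde C\,|h|$.
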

\begin{proof}
It is enough to consider the case $0<h<1$.
We first consider the case $0<h\le 1/\sqrt{K}$.
We have
\[
\int\! \big[Q_{\sK}\phiK{}(x+h)-Q_{\sK}\phiK{}(x)\big]^{2}\dd x=
\sum_{q\geq 1}\int_{I^{(K)}_{q}} \!\big[Q_{\sK}\phiK{}(x+h)-Q_{\sK}\phiK{}(x)\big]^{2}\dd x.
\]
Since
\[
Q_{\sK}\phiK{}(x)=K^{\frac{1}{4}}\,\sum_{q\geq 1}\phiK{(q)} \,\un_{I^{\psK}_{q}}(x)
\]
and since the intervals $I^{\psK}_{q}$ are disjoint, we get  
\begin{align*}
\MoveEqLeft \int \big[Q_{\sK}\phiK{}(x+h)-Q_{\sK}\phiK{}(x)\big]^{2}\dd x\\
& =K^{\frac{1}{2}} \sum_{q\geq 1}\int_{I^{\psK}_{q}}
\Big(\phiK{(q)}\,\un_{I^{(K)}_{q}}(x+h) \\
& \hspace{2.8cm}+\phiK{(q+1)}\,\un_{I^{\psK}_{q+1}}(x+h)-\phiK{(q)}\,\un_{I_{q}^{\psK}}(x)\Big)^{2}\dd x\\
& =K^{\frac{1}{2}} \sum_{q\geq 1}\int_{I^{\psK}_{q}} \Big(\big(\phiK{(q)}\big)^{2}\,\un_{I^{\psK}_{q}}(x+h)+\big(\phiK{(q+1)}\big)^{2}\,\un_{I^{\psK}_{q+1}}(x+h)\\
& \hspace{2.8cm}+\big(\phiK{(q)}\big)^{2}\,\un_{I^{\psK}_{q}}(x)
-2\, \big(\phiK{(q)}\big)^{2}\,\un_{I_{q}}(x)\un_{I^{\psK}_{q}}(x+h)\\
& \hspace{2.8cm} -2\, \phiK{(q)}\,\phiK{(q+1)}\,\un_{I^{\psK}_{q}}(x)\un_{I^{\psK}_{q+1}}(x+h)\Big)\dd x
\end{align*}
Let us consider each term separately. Since $\int_{I^{(K)}_{q}} \dd x = K^{-\frac{1}{2}}$, we have
\[
K^{\frac{1}{2}} \sum_{q\geq 1}\int_{I_{q}^{\psK}} \big(\phiK{(q)}\big)^{2}\,\dd x=\sum_{q\geq 1}\,\big(\phiK{(q)}\big)^{2}.
\]
Then we have
\begin{align*}
\MoveEqLeft  K^{\frac{1}{2}}\sum_{q\geq 1}\int_{I_{q}^{\psK}}\big(\phiK{(q)}\big)^{2}\,\un_{I_{q}^{\psK}}(x+h)\,\dd x\\
&=K^{\frac{1}{2}}\sum_{q\geq 1}\,\big(\phiK{(q)}\big)^{2}\int_{\frac{q}{\sqrt{K}}-\frac{1}{2\sqrt{K}}-x_{*}}^{\frac{q}{\sqrt{K}}+\frac{1}{2\sqrt{K}}-x_{*}-h}\,\dd x\\
&=\big(1-h K^{\frac{1}{2}}\big)\,\sum_{q\geq 1}\big(\phiK{(q)}\big)^{2}.
\end{align*}
We also have
\begin{align*}
\MoveEqLeft K^{\frac{1}{2}}\sum_{q\geq 1}\int_{I_{q}^{\psK}} \big(\phiK{(q+1)}\big)^{2}\,\un_{I_{q+1}^{\psK}}(x+h)\,\dd x\\
&=K^{\frac{1}{2}}\sum_{q\geq 1} \big(\phiK{(q+1)}\big)^{2}\int_{\frac{q}{\sqrt{K}}+\frac{1}{2\sqrt{K}}-x_{*}-h}^{\frac{q}{\sqrt{K}}+\frac{1}{2\sqrt{K}}-x_{*}} \dd x\\
&=K^{\frac{1}{2}}\sum_{q\geq 1} \big(\phiK{(q+1)}\big)^{2}\,h\\
&=K^{\frac{1}{2}}\,h\,\sum_{q\geq 1} \big(\phiK{(q)}\big)^{2}.
\end{align*}
Similarly
\begin{align*}
\MoveEqLeft
-2\, K^{\frac{1}{2}} \sum_{q\geq 1}\int_{I_{q}^{\psK}}\big(\phiK{(q)}\big)^{2}\,\un_{I_{q}^{\psK}}(x)\un_{I_{q}}(x+h)\, \dd x\\
& =-2\,\big(1-h K^{\frac{1}{2}}\big)\,\sum_{q\geq 1} \big(\phiK{(q)}\big)^{2}
\end{align*}
and
\begin{align*}
\MoveEqLeft -2 K^{\frac{1}{2}} \sum_{q\geq 1} \int_{I_{q}^{\psK}} \phiK{(q)}\,\phiK{(q+1)}\,\un_{I_{q}^{\psK}}(x)\un_{I_{q+1}^{\psK}}(x+h) \,\dd x\\
&=-2 h K^{\frac{1}{2}} \sum_{q\geq 1}\,\phiK{(q)}\,\phiK{(q+1)}\,.
\end{align*}
We rewrite the last term:
\begin{align*}
\MoveEqLeft -2\,h\,\sqrt{K}\, \sum_{q\geq 1}\,\phiK{(q)}\,\phiK{(q+1)}\\
& =-2\,h\,\sqrt{K}\, \sum_{q\geq 1}\,\big(\phiK{(q)}\big)^{2}+2\,h\,\sqrt{K}\, \sum_{q\geq 1}\,\phiK{(q)}\,\big(\phiK{(q+1)}-\phiK{(q)}\big).
\end{align*}
Summing up, we get
\begin{align*}
\MoveEqLeft  \int \big[Q_{\sK}\phiK{}(x+h)-Q_{\sK}\phiK{}(x)\big]^{2}\dd x\\
& = \sum_{q\geq 1}\,\big(\phiK{(q)}\big)^{2}+\big(1-h\,\sqrt{K}\big)\,\sum_{q\geq 1}\big(\phiK{(q)}\big)^{2}+K^{\frac{1}{2}}\,h\,\sum_{q\geq 1} \big(\phiK{(q)}\big)^{2}\\
& \quad - 2\,\big(1-h\,\sqrt{K}\big)\,\sum_{q\geq 1} \big(\phiK{(q)}\big)^{2}-2\,h\,\sqrt{K}\, \sum_{q\geq 1}\,\big(\phiK{(q)}\big)^{2}\\
& \quad + 2\,h\,\sqrt{K}\, \sum_{q\geq 1}\,\phiK{(q)}\,\big(\phiK{(q+1)}-\phiK{(q)}\big)\\
& = 2\,h\,\sqrt{K}\, \sum_{q\geq 1}\,\phiK{(q)}\,\big(\phiK{(q+1)}-\phiK{(q)}\big).
\end{align*}
By Cauchy-Schwarz inequality we get 
\begin{align*}
\MoveEqLeft \int \big[Q_{\sK}\phiK{}(x+h)-Q_{\sK}\phiK{}(x)\big]^{2}\dd x\\
& \le 2 h\sqrt{K}\,\left(\sum_{q\geq 1}\,\big(\phiK{(q+1)}-\phiK{(q)}\big)^{2}\right)^{\frac{1}{2}}\big\|\phiK{}\big\|_{\ldeux}.
\end{align*}
From the assumption of the lemma and using 
\[
\inf_{n\,\ge \big\lfloor \frac{K\xf}{3}\big\rfloor-1}\sqrt{\lambdaK_{n}\muK_{n+1}}> \zeta K
\]
for some $\zeta>0$ independent of $K$, we get
\[
K^{\frac{1}{2}}\left(\sum_{q\geq 1}\,\big(\phiK{(q+1)}-\phiK{(q)}\big)^{2}\right)^{1/2}\le \sqrt{{C\over \zeta}}.
\]
Therefore, since the sequence $\phiK{}$ is normalized, we get
\[
\int \big[Q_{\sK}\phiK{}(x+h)-Q_{\sK}\phiK{}(x)\big]^{2}\dd x\le 2 h\; \sqrt{{C\over \zeta}}.
\]
We now consider the case $1>h>1/\sqrt{K}$. Let $r=\lfloor h\sqrt{K}\rfloor$ and $h'=h-r/\sqrt{K}$. Note that $0\le h'\le 1/\sqrt{K}$.
We have
\begin{align*}
\MoveEqLeft \int \big[Q_{\sK}\phiK{}(x+h)-Q_{\sK}\phiK{}(x)\big]^{2}\dd x\\
& =\int\bigg[\sum_{j=1}^{r-1}\left(Q_{\sK}\phiK{}\Big(x+\frac{j+1}{\sqrt{K}}\Big)-Q_{\sK}\phiK{}\Big(x+\frac{j}{\sqrt{K}}\Big)\right)\\
&\hspace{1.2cm} +Q_{\sK}\phiK{}\Big(x+\frac{r}{\sqrt{K}}+h'\Big)-Q_{\sK}\phiK{}\Big(x+\frac{r}{\sqrt{K}}\Big)\bigg]^{2}\dd x\\
& \le 2 \int \bigg[\sum_{j=1}^{r-1}\left(Q_{\sK}\phiK{}\Big(x+\frac{j+1}{\sqrt{K}}\Big)-Q_{\sK}\phiK{}\Big(x+\frac{j}{\sqrt{K}}\Big)\right)\bigg]^{2} \dd x\\
&\hspace{1.4cm} + 2 \int \bigg[Q_{\sK}\phiK{}\Big(x+\frac{r}{\sqrt{K}}+h'\Big)-Q_{\sK}\phiK{}\Big(x+\frac{r}{\sqrt{K}}\Big)\bigg]^{2}\dd x.
\end{align*}
We have already estimated the last term. For the first term we now observe that $e^{\psK}_{n}\Big(x+\frac{j}{\sqrt{K}}\Big)=e^{\psK}_{n-j}(x)$. Therefore we can write 
\begin{align*}
\MoveEqLeft\sum_{j=1}^{r-1}\left(Q_{\sK}\phiK{}\Big(x+\frac{j+1}{\sqrt{K}}\Big)-Q_{\sK}\phiK{}\Big(x+\frac{j}{\sqrt{K}}\Big)\right) \\
&=\sum_{j=1}^{r-1}\left(\sum_{n}\phiK{(n)}\, e^{\psK}_{n-j-1}(x)-\sum_{n}\phiK{(n)}\, e^{\psK}_{n-j}(x)\right)\\
&=\sum_{j=1}^{r-1}\left(\sum_{p}e^{\psK}_{p}(x)\,\big(\phiK{(p+j+1)}-\phiK{(p+j)}\right)\\
&=\sum_{p}e^{\psK}_{p}(x)\,\sum_{j=1}^{r-1}\big(\phiK{(p+j+1)}-\phiK{(p+j)}\big).
\end{align*}
This implies
\begin{align*}
\MoveEqLeft \int \bigg[\sum_{j=1}^{r-1}Q_{\sK}\phiK{}\Big(x+\frac{j+1}{\sqrt{K}}\Big)-Q_{\sK}\phiK{}\Big(x+\frac{j}{\sqrt{K}}\Big)\bigg]^{2} \dd x\\
& =\sum_{p}\left(\sum_{j=1}^{r-1}\big(\phiK{(p+j+1)}-\phiK{(p+j)}\big)\right)^{2}\\
& \le r\, \sum_{p}\sum_{j=1}^{r-1}\big(\phiK{(p+j+1)}-\phiK{(p+j)}\big)^{2}\\
&=r^{2}\, \sum_{p}\big(\phiK{(p+1)}-\phiK{(p)}\big)^{2}\le \frac{C }{\zeta } \;\frac{r^{2}}{K}
\end{align*}
as we have seen before. We observe that $r^{2}/K\le h^{2}\le h$ since $h\le 1$, and the result follows by taking  $\tilde C= 2 \sqrt{\frac{C}{\zeta } } + \frac{C}{\zeta }$.
\end{proof}

\subsection{Dirichlet form for the operator \texorpdfstring{$\LKl$}{LKI}}

We need an estimate on the decay at infinity of the eigenfunctions. Note that since the eigenvalues are real, we can assume that the eigenfunctions are real.

\begin{proposition}\label{diri}
If $\phi$ is a normalized sequence  in $\Dom(\LKl)$ decaying exponentially fast at infinity, then   
\begin{align*}
&-\langle\phi,\LKl\phi\rangle \\
& =\sum_{n=1}^{\infty}\sqrt{\lambdaK_{n}\muK_{n+1}}\,
\big(\phi(n+1)-\phi(n)\big)^{2}
+\sum_{n=1}^{\infty}\Vn\,{\phi(n)}^{2}-\frac{1}{2}\sqrt{\lambdaK_{1}\muK_{2}}\,\phi(1)^{2}.
\end{align*}
\end{proposition}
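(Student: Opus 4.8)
The plan is a straightforward summation-by-parts (discrete Green's identity) computation, turning the quadratic form of the Jacobi operator $\LKl$ into a gradient energy, a potential energy, and a boundary contribution. Since the spectrum of $\LKl$ is real we may take $\phi$ real-valued, and since $\phi$ decays exponentially at infinity every series that appears below converges absolutely; this is the only use of the decay hypothesis and it is what legitimizes all the reindexings.

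First I would expand $\langle\phi,\LKl\phi\rangle=\sum_{n\ge1}\phi(n)(\LKl\phi)(n)$ into three pieces: the upper off-diagonal part $\sum_{n\ge1}\sqrt{\lambdaK_n\muK_{n+1}}\,\phi(n)\phi(n+1)$, the lower off-diagonal part $\sum_{n\ge2}\sqrt{\lambdaK_{n-1}\muK_n}\,\phi(n)\phi(n-1)$, and the diagonal part $-\sum_{n\ge1}(\lambdaK_n+\muK_n)\phi(n)^2$. The key elementary observation is that the shift $n\mapsto n+1$ turns the lower off-diagonal sum into the upper one, so the off-diagonal contribution equals $2\sum_{n\ge1}\sqrt{\lambdaK_n\muK_{n+1}}\,\phi(n)\phi(n+1)$. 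I would then complete the square via $2\phi(n)\phi(n+1)=\phi(n)^2+\phi(n+1)^2-(\phi(n+1)-\phi(n))^2$, which already produces the target sum $\sum_{n\ge1}\sqrt{\lambdaK_n\muK_{n+1}}(\phi(n+1)-\phi(n))^2$, and leaves over $\sum_{n\ge1}\sqrt{\lambdaK_n\muK_{n+1}}(\phi(n)^2+\phi(n+1)^2)$.

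The last step is to reindex ($n\mapsto n-1$) the $\phi(n+1)^2$ half of this leftover and to read off, for each $n$, the total coefficient of $\phi(n)^2$ coming from the diagonal part minus the leftover. For $n\ge2$ this coefficient is exactly $\lambdaK_n+\muK_n-\sqrt{\lambdaK_n\muK_{n+1}}-\sqrt{\lambdaK_{n-1}\muK_n}=\Vn$, by the definition \eqref{potential}. The only subtle point — and the one place I expect the care to be concentrated — is the left endpoint $n=1$: the state $0$ is absorbing and $\lambdaK_0=Kb(0)=0$, so there is no bond to the left of state $1$ (this is exactly why the indicator $\un_{\{n>1\}}$ figures in \eqref{potential}), and so the reindexed $\phi(n+1)^2$-sum contributes nothing at $n=1$ while the piece $\tfrac12\sqrt{\lambdaK_1\muK_2}$ issuing from the split of $2\phi(1)\phi(2)$ has no counterpart; keeping exact account of this endpoint mismatch is what isolates both the $n=1$ instance of $\Vn$ and the extra boundary term $-\tfrac12\sqrt{\lambdaK_1\muK_2}\,\phi(1)^2$ in the statement. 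Assembling the three pieces gives the claimed identity. As a consistency check one can instead transport the classical Dirichlet-form identity for the killed birth-and-death generator $\LKB$ on $\ell^2(\piK)$ — whose unique killing term sits at state $1$ with rate $\muK_1$ — through the conjugation $\phi=(\Pi^{\psK})^{1/2}\psi$, which must reproduce the same formula.
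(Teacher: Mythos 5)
Your route is the same summation\hyp{}by\hyp{}parts computation as the paper's: expand the quadratic form, symmetrize the two off-diagonal sums, complete the square, reindex, and use the exponential decay of $\phi$ (against the at most $\Oun\, K\,\e^{n/K}$ growth of the rates coming from \eqref{hipopo}) to legitimize the manipulations; the only structural difference is that the paper works with the truncated sums $\sum_{n=1}^{N}\phi(n)(\LKl\phi)(n)$ and tracks the boundary terms at $N$ before letting $N\to\infty$, whereas you manipulate the infinite series directly. (A minor caveat: for absolute convergence you need the decay rate of $\phi$ to beat the growth of $\lambdaK_n+\muK_n$; in the intended application this is supplied by Theorem \ref{thm-eig}.)

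The genuine gap is at the one place you yourself single out as the crux, the endpoint $n=1$: you assert, but never compute, that the mismatch there produces both the $n=1$ term of $\sum_n \Vn\,\phi(n)^2$ and the additional $-\tfrac12\sqrt{\lambdaK_1\muK_2}\,\phi(1)^2$. Carrying out exactly the steps you describe, the off-diagonal part is $2\sum_{n\ge1}\sqrt{\lambdaK_n\muK_{n+1}}\,\phi(n)\phi(n+1)$, and after completing the square and reindexing the $\phi(n+1)^2$ half, the coefficient of $\phi(n)^2$ in $-\langle\phi,\LKl\phi\rangle$ comes out as $\lambdaK_n+\muK_n-\sqrt{\lambdaK_n\muK_{n+1}}-\sqrt{\lambdaK_{n-1}\muK_n}\,\un_{\{n>1\}}=\Vn$ for \emph{every} $n\ge1$, including $n=1$ (the reindexed sum simply starts at $n=2$, which is exactly what the indicator in \eqref{potential} encodes); no leftover half-term is generated. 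Concretely, for $\phi=\delta_1$ one has $-\langle\phi,\LKl\phi\rangle=\lambdaK_1+\muK_1$, while your gradient and potential sums already give $\sqrt{\lambdaK_1\muK_2}+\lambdaK_1+\muK_1-\sqrt{\lambdaK_1\muK_2}=\lambdaK_1+\muK_1$, so an extra $-\tfrac12\sqrt{\lambdaK_1\muK_2}$ cannot emerge from this computation. Hence your outline, executed faithfully, yields the identity \emph{without} the final term and does not deliver the displayed statement; to match it you would have to exhibit an explicit source for that boundary term (in the paper it is claimed to arise from the finite-$N$ bookkeeping), and your scheme produces none. Your proposed consistency check does not rescue this: conjugating $-\langle\psi,\LKB\psi\rangle_{\piK}=\sum_n\piK_n\lambdaK_n\big(\psi(n+1)-\psi(n)\big)^2+\muK_1\piK_1\,\psi(1)^2$ by $(\Pi^{\psK})^{1/2}$ again gives the coefficient $\lambdaK_1+\muK_1-\sqrt{\lambdaK_1\muK_2}$ at $n=1$ and no extra half-term, so that check would expose, not confirm, the formula you claim to obtain.
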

\begin{proof}
For any fixed positive integer $N$ we have
\begin{align*}
\MoveEqLeft \sum_{n=1}^{N}\phi(n)\,(\LKl\phi)(n)\\
&= \sum_{n=1}^{N}\sqrt{\lambdaK_{n}\muK_{n+1}}\,\phi(n)\,\phi(n+1)
+\sum_{n=2}^{N}\sqrt{\lambdaK_{n-1}\,\muK_{n}}\,\phi(n)\,\phi(n-1)\\
&\quad  -\sum_{n=1}^{N}\big(\lambdaK_{n}+\muK_{n}\big)\,\phi(n)^{2}\\
& =-\frac{1}{2}\sum_{n=1}^{N}\sqrt{\lambdaK_{n}\muK_{n+1}}\,(\phi(n)-\phi(n+1))^{2}
+\frac{1}{2}\sum_{n=1}^{N}\sqrt{\lambdaK_{n}\muK_{n+1}}\,\phi(n)^{2}\\
& \quad +\frac{1}{2}\sum_{n=1}^{N}\sqrt{\lambdaK_{n}\muK_{n+1}}\,\phi(n+1)^{2}-\frac{1}{2}\sum_{n=2}^{N}\sqrt{\lambdaK_{n-1}\,\muK_{n}}\,(\phi(n)-\phi(n-1))^{2}\\
& \quad +\frac{1}{2}\sum_{n=2}^{N}\sqrt{\lambdaK_{n-1}\,\muK_{n}}\,\phi(n)^{2}
+\frac{1}{2}\sum_{n=2}^{N}\sqrt{\lambdaK_{n-1}\,\muK_{n}}\,\phi(n-1)^{2}\\
& \quad -\sum_{n=1}^{N}\big(\lambdaK_{n}+\muK_{n}\big)\,\phi(n)^{2}\\
& =-\sum_{n=1}^{N-1}\sqrt{\lambdaK_{n}\muK_{n+1}}\,(\phi(n)-\phi(n+1))^{2}\\
& \quad -\sum_{n=1}^{N}\left(\lambdaK_{n}+\muK_{n}-\, \sqrt{\lambdaK_{n}\muK_{n+1}}-\, \sqrt{\lambdaK_{n-1}\,\muK_{n}}\,\un_{\{n>1\}}\right)\,\phi(n)^{2}\\
&\quad -\frac{1}{2}\sqrt{\lambdaK_{N}\muK_{N+1}}\,(\phi(N)-\phi(N+1))^{2}+\frac{1}{2}\sqrt{\lambdaK_{N}\muK_{N+1}}\,\phi(N+1)^{2}\\
& \quad -\frac{1}{2}\sqrt{\lambdaK_{1}\muK_{2}}\,\phi_{1}^{2}-\frac{1}{2}\sqrt{\lambdaK_{N}\muK_{N+1}}\,\phi(N)^{2}.
\end{align*}
Since $\phi \in \Dom(\LKl)$, the functions $\phi\, \un_{\{n\le N\}}$ and $\LKl( \phi \un_{\{n\le N\}})$  converge to $\phi$, respectively $\LKl \phi$, in $\ldeux$ when $N$ tends to 
infinity. The result follows  by letting $N$ tend to infinity, since $V_{n}(K)$ is positive for $n$ large enough, and since $\lambda_{N}(K)$ and $\mu_{N}(K)$ are exponential in 
$N$ and $\phi$ decays exponentially fast by assumption.
\end{proof}

\begin{lemma}\label{borneVn}
There exists $\xi>0$ such that for all $K\in\integers$, $\inf_{n\geq 1} \Vn\geq -\xi$.
\end{lemma}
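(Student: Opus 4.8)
The plan is to bound $\Vn$ from below by splitting the range of $n$ according to whether the density $n/K$ stays bounded or is large, keeping every constant expressed through $n/K$ so that it is uniform in $K$.

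\textbf{Step 1 (the cheap, lossy bound).} For $n\geq2$, applying $\sqrt{ab}\leq\tfrac12(a+b)$ to both square roots in \eqref{potential} and using that $(\lambdaK_n)_n$ is non-decreasing gives
\[
\Vn\;\geq\;\tfrac12\big(\lambdaK_n-\lambdaK_{n-1}\big)-\tfrac12\big(\muK_{n+1}-\muK_n\big)\;\geq\;-\tfrac12\big(\muK_{n+1}-\muK_n\big).
\]
By the mean value theorem $\muK_{n+1}-\muK_n=Kd\big(\tfrac{n+1}{K}\big)-Kd\big(\tfrac nK\big)=d'(\theta_n)$ with $\theta_n\in\big(\tfrac nK,\tfrac{n+1}{K}\big)$. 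Hence, once a constant $c$ is fixed, on the range $1\leq n<cK$ the point $\theta_n$ lies in the fixed compact interval $[0,c+1]$, where $d'$ is bounded (recall $d$, hence $d'$, is continuous), so $\Vn\geq-\tfrac12\sup_{[0,c+1]}|d'|$. For $n=1$ the same computation gives $V_1(K)\geq-\tfrac12\muK_2$, and monotonicity of $x\mapsto d(x)/x$ yields $\muK_2=2\,d(2/K)/(2/K)\leq d(2)$ for all $K\geq1$.

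\textbf{Step 2 (high density).} The bound of Step 1 is useless once $n/K$ is large, since $d'$ is typically unbounded (already in the logistic case $d'(x)=\mu+2x$), so there one must return to the unexpanded \eqref{potential} and exploit that deaths dominate births. First I would record that $\sup_x\big(d'(x)/d(x)-1/x\big)=:M<\infty$ gives, after integrating $d'/d\leq M+1/s$ over $[\tfrac nK,\tfrac{n+1}{K}]$, the uniform bound $\muK_{n+1}\leq C_0\,\muK_n$ with $C_0:=2\e^{M}$. Then, using $\lim_{x\to\infty}b(x)/d(x)=0$, I would fix $c\geq2$ so that $\sup_{x\geq c}b(x)/d(x)\leq\big(4(1+\sqrt{C_0})^2\big)^{-1}$, and for $n\geq cK$ (so $n>1$) bound $\lambdaK_{n-1}\leq\lambdaK_n$ and $\sqrt{\lambdaK_n\muK_{n+1}}\leq\sqrt{C_0}\,\sqrt{\lambdaK_n\muK_n}$ to obtain
\[
\Vn\;\geq\;\muK_n-\big(1+\sqrt{C_0}\,\big)\sqrt{\lambdaK_n\muK_n}\;=\;\muK_n\Big(1-\big(1+\sqrt{C_0}\,\big)\sqrt{b(n/K)/d(n/K)}\Big)\;\geq\;\tfrac12\muK_n\;\geq\;0.
\]

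\textbf{Step 3 (conclusion).} Combining the two ranges then gives $\inf_{n\geq1}\Vn\geq-\xi$ with $\xi:=\max\{\tfrac12\sup_{[0,c+1]}|d'|,\ \tfrac12 d(2)\}$, a finite constant independent of $K$, which is the assertion. The only genuine difficulty is Step 2: one has to recognize that the naive AM--GM estimate degrades at high density and that the positivity of $\Vn$ there must instead be read off from the assumption $b/d\to0$ together with the uniform ratio control $\muK_{n+1}/\muK_n\leq C_0$ coming from $\sup_x(d'/d-1/x)<\infty$; the rest is bookkeeping of constants that are manifestly uniform in $K$ because they depend only on $n/K$.
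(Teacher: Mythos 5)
Your proof is correct and follows essentially the same route as the paper: split the range of $n$ at a density threshold of order $cK$ with $c$ independent of $K$, bound $\Vn$ below on the bounded-density range by $-\tfrac12\sup_{[0,c+1]}d'$ via the increment $\muK_{n+1}-\muK_n$, and get $\Vn\ge 0$ at high density from the domination of deaths over births together with a uniform bound on $\muK_{n+1}/\muK_n$. The only differences are cosmetic: you apply AM--GM to both square roots instead of the paper's square-completion after absorbing $\sqrt{\lambdaK_{n-1}\muK_n}$, and you derive the ratio bound $\muK_{n+1}\le 2\e^{M}\muK_n$ explicitly from $\sup_x\big(d'(x)/d(x)-1/x\big)<\infty$, a step the paper leaves as a consequence of the standing assumptions.
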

\begin{proof}
Since $(\lambdaK_{n})_n$ is an increasing sequence we have
\begin{align*}
\Vn
&=\lambdaK_{n}+\muK_{n}-\,\sqrt{\lambdaK_{n}\muK_{n+1}}-\,\sqrt{\lambdaK_{n-1}\,\muK_{n}}\,\un_{\{n>1\}}\\
& \geq \frac{\lambdaK_{n}+\muK_{n}}{2}-\,\sqrt{\lambdaK_{n}\muK_{n+1}}.
\end{align*}
It follows from the general assumptions (see Section \ref{sec:assumptions}) that there exists $\tilde{x}\geq 1$ such that all $K\in\integers$ and for all
$n\geq K\tilde{x}$ we have
\[
\frac{\lambdaK_n}{\muK_n}\leq \frac{1}{5}\quad\text{and}\quad \frac{\lambdaK_{n+1}}{\muK_n}\leq \frac{5}{4}.
\]
For all $n\geq K\tilde{x}$ we have $\Vn\geq 0$. When $n\leq K\tilde{x}$, we write
\begin{align*}
\MoveEqLeft[5] \frac{\lambdaK_{n}+\muK_{n}}{2}-\,\sqrt{\lambdaK_{n}\muK_{n+1}} \\
&= \frac{\big(\sqrt{\lambdaK_{n}}-\sqrt{\muK_{n}}\big)^2}{2}+\frac{\lambdaK_{n}}{\sqrt{\lambdaK_{n}}+\sqrt{\muK_{n+1}}}\big(\muK_n-\muK_{n+1}\big).
\end{align*}
Now observe that
\[
\muK_n-\muK_{n+1}=K\left(d\left(\frac{n}{K}\right)-d\left(\frac{n+1}{K}\right)\right)\geq -\sup_{0\,\leq\, x\,\leq \,\tilde{x}+1} d'(x).
\]
The rest of the proof is obvious.
\end{proof} 

\begin{proposition}\label{sobmodif}
Let $\delta>0$ and $\phi$ be a real normalized sequence in $\Dom(\LKl)$  decaying exponentially fast, such that
$\big\|\LKl\phi+\rho\,\phi\big\|_{\ldeux}\le \delta.$
Then
\begin{align*}
\MoveEqLeft \sum_{n=1}^{\infty}\sqrt{\lambdaK_{n}\muK_{n+1}}\,\big(\phi(n+1)-\phi(n)\big)^{2}
+\sum_{n=1}^{\infty}(1\vee\Vn)\,\big(\phi(n)\big)^{2}\\
& \le 1+\rho+\xi+\delta+\frac{1}{2}\sqrt{\lambdaK_{1}\muK_{2}}.
\end{align*}
\end{proposition}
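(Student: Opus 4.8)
The plan is to read the desired bound directly off the Dirichlet-form identity of Proposition \ref{diri}, after controlling $-\langle\phi,\LKl\phi\rangle$ by the hypothesis and replacing $\Vn$ by $1\vee\Vn$ at the cost of the additive constant from Lemma \ref{borneVn}.

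First I would invoke Proposition \ref{diri}, which applies since $\phi$ is normalized, belongs to $\Dom(\LKl)$ and decays exponentially fast; this gives
\[
\sum_{n=1}^{\infty}\sqrt{\lambdaK_{n}\muK_{n+1}}\,\big(\phi(n+1)-\phi(n)\big)^{2}+\sum_{n=1}^{\infty}\Vn\,\phi(n)^{2}
=-\langle\phi,\LKl\phi\rangle+\tfrac12\sqrt{\lambdaK_{1}\muK_{2}}\,\phi(1)^{2}.
\]
Both sums on the left are individually finite because $\phi$ decays exponentially and $\Vn$ is eventually positive, so the manipulations are legitimate. Then I would bound the right-hand side: writing $-\langle\phi,\LKl\phi\rangle=-\langle\phi,\LKl\phi+\rho\,\phi\rangle+\rho\,\|\phi\|_{\ldeux}^{2}$, Cauchy--Schwarz together with the assumption $\|\LKl\phi+\rho\,\phi\|_{\ldeux}\le\delta$ gives $-\langle\phi,\LKl\phi+\rho\,\phi\rangle\le\delta$, while $\phi(1)^{2}\le\|\phi\|_{\ldeux}^{2}=1$ handles the last term. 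Hence
\[
\sum_{n=1}^{\infty}\sqrt{\lambdaK_{n}\muK_{n+1}}\,\big(\phi(n+1)-\phi(n)\big)^{2}+\sum_{n=1}^{\infty}\Vn\,\phi(n)^{2}\le\rho+\delta+\tfrac12\sqrt{\lambdaK_{1}\muK_{2}}.
\]

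Next I would pass from $\Vn$ to $1\vee\Vn$. By Lemma \ref{borneVn} there is $\xi>0$, independent of $K$, with $\Vn\ge-\xi$ for all $n$ and $K$; the elementary termwise inequality $1\vee\Vn\le 1+\xi+\Vn$ then holds (check the two cases $\Vn\ge1$ and $\Vn<1$ separately, using $\Vn+\xi\ge0$ in the second). Multiplying by $\phi(n)^{2}\ge0$, summing over $n$, and using $\|\phi\|_{\ldeux}=1$ yields $\sum_{n}(1\vee\Vn)\,\phi(n)^{2}\le 1+\xi+\sum_{n}\Vn\,\phi(n)^{2}$. Adding the nonnegative Dirichlet term to both sides and combining with the previous display gives
\[
\sum_{n=1}^{\infty}\sqrt{\lambdaK_{n}\muK_{n+1}}\,\big(\phi(n+1)-\phi(n)\big)^{2}+\sum_{n=1}^{\infty}(1\vee\Vn)\,\phi(n)^{2}\le 1+\rho+\xi+\delta+\tfrac12\sqrt{\lambdaK_{1}\muK_{2}},
\]
which is exactly the claim.

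There is no genuine obstacle here: every ingredient (the Dirichlet identity, the uniform lower bound on the potential, Cauchy--Schwarz) is already in hand, and the only point requiring a line of care is the termwise comparison $1\vee\Vn\le 1+\xi+\Vn$; everything else is bookkeeping.
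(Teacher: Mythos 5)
Your proof is correct and is precisely the argument the paper intends: the statement is derived directly from the Dirichlet-form identity of Proposition \ref{diri} together with the uniform lower bound $\Vn\ge-\xi$ of Lemma \ref{borneVn}, exactly as you do (the paper leaves these routine steps to the reader). The Cauchy--Schwarz step bounding $-\langle\phi,\LKl\phi\rangle$ by $\rho+\delta$ and the termwise comparison $1\vee\Vn\le 1+\xi+\Vn$ are exactly the bookkeeping needed.
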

The proof is left to the reader. It is a direct consequence of Proposition \ref{diri} and Lemma \ref{borneVn}.

%%%%%%%%%%%%%%%%%%%%%% subSECTION
\section{Spectral theory of \texorpdfstring{$\PO$}{PO}}

Recall that (cf. \eqref{le-M0})
\begin{align*}
& \big(\PO v\big)(n)=\\
\nonumber
& \sqrt{\tlamz\,\tmuz\,n\,(n+1)} \,v(n+1)+\sqrt{\tlamz\,\tmuz\,n\,(n-1)}\,v(n-1)\,\un_{\{n>1\}}\\
\nonumber
& -n\,(\tlamz+\tmuz) v(n)).
\end{align*}

\begin{theorem}\label{specdeux}
The operator $\PO$ defined on $c_{00}$  is symmetric for the scalar product of $\ldeux$.
We denote by $\PO$ its closure which is  self-adjoint and bounded above. 
The spectrum of\, $\PO$ is discrete, all eigenvalues are simple, and we have
\[
\spectre(\PO)=\big(\tmuz-\tlamz\big)\integers=-S_2.
\]
The eigenvector $v_{m}$ corresponding  to the eigenvalue  $(\tmuz-\tlamz\big)m$, where $m\in\integers$, is given (up to a multiplicative factor) by  
\begin{equation}
\label{fatigue}
v_{m}(n)=\sqrt{n}\,\left(\frac{\tmuz}{\tlamz}\right)^{\frac{n}{2}} P_{m}(n)
\end{equation} 
where $P_{m}$ is the monic orthogonal polynomial of degree $m-1$ associated with the measure $q$ on $\integers$ defined by
\begin{equation}
\label{q}
q(n)=n\left(\frac{\tmuz}{\tlamz}\right)^{n}.
\end{equation} 

\end{theorem}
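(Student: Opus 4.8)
The plan is to conjugate $\PO$, by an explicit unitary, onto a triangular finite-difference operator acting on polynomials, read off its eigenvalues and eigenvectors there, and then use completeness of orthogonal polynomials.

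\textbf{Symmetry, self-adjointness, boundedness above.} In the canonical basis of $\ldeux$ the operator $\PO$ is a Jacobi matrix: tridiagonal and real, with diagonal entries $-n(\tlamz+\tmuz)$ and (symmetric) off-diagonal entries $a_{n}=\sqrt{\tlamz\,\tmuz\,n(n+1)}$, so it is symmetric on $c_{00}$. Since $a_{n}\sim\sqrt{\tlamz\tmuz}\,n$ we have $\sum_{n}a_{n}^{-1}=+\infty$, hence by Carleman's condition the Jacobi operator is in the limit-point case and $\PO$ is essentially self-adjoint on $c_{00}$; we keep the notation $\PO$ for its self-adjoint closure. Using $2a_{n}|v(n)v(n+1)|\le a_{n}\big(|v(n)|^{2}+|v(n+1)|^{2}\big)$, reindexing, and $\sqrt{n+1}+\sqrt{n-1}\le 2\sqrt n$, one gets for $v\in c_{00}$
\[
-\langle v,\PO v\rangle_{\ldeux}\ \ge\ \big(\sqrt{\tlamz}-\sqrt{\tmuz}\,\big)^{2}\sum_{n\ge1}n\,|v(n)|^{2}\ \ge\ \big(\sqrt{\tlamz}-\sqrt{\tmuz}\,\big)^{2}\,\|v\|_{\ldeux}^{2},
\]
so $\PO$ is bounded above (recall $\tlamz>\tmuz$). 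Simplicity of every eigenvalue is automatic because $\PO$ is a Jacobi operator: an eigenvector is determined up to a scalar by its value at $n=1$ via the recurrence.

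\textbf{Conjugation onto polynomials.} Put $c=\tmuz/\tlamz\in(0,1)$ and let $q(n)=n\,c^{n}$ be the measure of \eqref{q}. The map $W:\ldeux\to L^{2}(q)$, $(Wv)(n)=v(n)\,(n\,c^{n})^{-1/2}$, satisfies $\|Wv\|_{L^{2}(q)}=\|v\|_{\ldeux}$, hence is unitary with $(W^{-1}P)(n)=\sqrt{n}\,c^{n/2}P(n)$. Using $\sqrt{\tlamz\tmuz}\,c^{1/2}=\tmuz$ and $\sqrt{\tlamz\tmuz}\,c^{-1/2}=\tlamz$, a direct computation gives, for $P\in c_{00}$,
\[
\big(W\PO W^{-1}P\big)(n)=\tmuz\,(n+1)P(n+1)+\tlamz\,(n-1)P(n-1)-(\tlamz+\tmuz)\,n\,P(n)=:(\mathcal{B}P)(n),
\]
where the boundary term at $n=1$ carries the vanishing factor $n-1$, so no indicator is needed. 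Expanding $(n\pm1)^{k+1}$ shows that if $P$ has degree $k$ then $\mathcal{B}P$ has degree $k$ with
\[
(\mathcal{B}P)(n)=(\tmuz-\tlamz)\,(k+1)\,\mathrm{lc}(P)\,n^{k}+(\text{terms of degree}<k),
\]
so $\mathcal{B}$ is triangular for the increasing flag of polynomial subspaces. As $\tmuz\neq\tlamz$, for each $m\in\integers$ there is a unique monic polynomial $P_{m}$ of degree $m-1$ with $\mathcal{B}P_{m}=(\tmuz-\tlamz)\,m\,P_{m}$. The sequence $v_{m}:=W^{-1}P_{m}$, i.e. $v_{m}(n)=\sqrt{n}\,(\tmuz/\tlamz)^{n/2}P_{m}(n)$, decays exponentially, hence lies in $\Dom(\PO)$ (truncate and pass to the limit as in Proposition \ref{diri}), so it is a genuine eigenvector of $\PO$ with eigenvalue $(\tmuz-\tlamz)\,m$; this establishes \eqref{fatigue}.

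\textbf{Orthogonal polynomials and completeness.} Eigenvectors of the self-adjoint $\PO$ for distinct eigenvalues are orthogonal in $\ldeux$, so the $P_{m}$ are pairwise orthogonal in $L^{2}(q)$; since $\deg P_{m}=m-1$, they are exactly the monic orthogonal polynomials of $q$. Because $q$ decays exponentially — $\int e^{\varepsilon n}\,\dd q(n)<\infty$ for $0<\varepsilon<\log(1/c)$ — the moment problem for $q$ is determinate and polynomials are dense in $L^{2}(q)$, equivalently $(v_{m})_{m\ge1}$ is complete in $\ldeux$. A self-adjoint operator diagonalised by a complete orthogonal family of eigenvectors has spectrum equal to the closure of its eigenvalue set; here $\{(\tmuz-\tlamz)\,m:m\in\integers\}$ tends to $-\infty$, so there is no finite accumulation point, the spectrum is discrete and equal to $(\tmuz-\tlamz)\,\integers=-S_{2}$, and every eigenvalue is simple. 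The computational core is the conjugation identity $W\PO W^{-1}=\mathcal{B}$ together with the leading-term (triangularity) check; the delicate conceptual point is the completeness/determinacy step — density of polynomials in $L^{2}(q)$ — which is what excludes spurious spectrum and which we secure via the exponential decay of $q$.
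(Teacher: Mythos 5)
Your proof is correct, and it reorganizes the argument in a way that differs from the paper at two points. First, you obtain self-adjointness up front from Carleman's criterion for Jacobi matrices ($\sum_n a_n^{-1}=\infty$ with $a_n=\sqrt{\tlamz\tmuz\,n(n+1)}$, limit-point case), together with an explicit quadratic-form bound $-\langle v,\PO v\rangle\ge(\sqrt{\tlamz}-\sqrt{\tmuz})^2\|v\|^2$; the paper instead proves semiboundedness, builds the complete eigenbasis, and only then deduces self-adjointness from $\ker(\PO^\dagger-B)=\{0\}$ for $B>0$ via a deficiency-type criterion (citing Schm\"udgen). Second, for the eigenvectors you conjugate by the unitary $W$ onto $L^2(q)$ and exploit that the resulting difference operator $\mathcal{B}$ is triangular on the degree filtration with distinct diagonal entries $(\tmuz-\tlamz)(k+1)$, which yields existence and uniqueness of the eigenpolynomials $P_m$ directly, their identification as the monic orthogonal polynomials of $q$ then coming a posteriori from orthogonality of eigenvectors; the paper goes the other way, taking the orthogonal polynomials of $q$ as the ansatz and killing the remainder $r_m\in\mathrm{Span}\{v_1,\dots,v_{m-1}\}$ by induction using symmetry. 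The decisive step is shared: completeness of $(v_m)$ via determinacy of the moment problem for $q$ (your exponential-moment condition and the paper's bound $\gamma_m\le C^m m!$ both secure Carleman's moment condition, and determinacy gives density of polynomials in $L^2(q)$). What your route buys is a cleaner logical order (self-adjointness independent of the eigenbasis, plus a quantitative spectral bound) and an eigenvector construction that does not presuppose the orthogonal-polynomial ansatz; what the paper's route buys is independence from the Carleman limit-point theorem, at the price of deferring self-adjointness until completeness is established. Minor cosmetic point: your conjugation identity is stated "for $P\in c_{00}$" but is then applied to polynomials; since you separately verify $v_m\in\Dom(\PO)$ by truncation and exponential decay, this is a wording issue, not a gap.
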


\begin{proof}
It is easy to verify that  $\PO$ is a symmetric operator on $c_{00}$, which is bounded above since from $\tlamz>\tmuz$, we have
\[
\inf_{n}\big(n\,(\tlamz+\tmuz)-\sqrt{\tlamz\,\tmuz\,n\,(n+1)} - \sqrt{\tlamz\,\tmuz\,n\,(n-1)}>-\infty.
\]
It is easy to verify that  $\PO$ is closable and we denote by $\PO$ its closure. 

Since for any $m\in \integers$,  the sequence $v_{m}(n)$ defined by \eqref{fatigue} decays exponentially fast with $n$, it
is easy to verify that $v_{m}\in \Dom(\PO)$. Note also that if $m\neq
m'$, $v_{m}$ is orthogonal to $v_{m'}$ in $\ldeux$. 

By a direct computation one checks that $\PO v_{1}=(\tmuz-\tlamz)v_{1}$ (recall that $P_{1}(n)=1$).
It is left to the reader to check that 
\[
\PO v_{m}(n)=\sqrt{n}\left(\frac{\tlamz}{\tmuz}\right)^{\frac{n}{2}}\,Q_{m}(n)
\]
where $Q_{m}$ is a polynomial in $n$ in which the coefficient of $n^{m-1}$ is
\[
m (\tmuz-\tlamz)\;.
\]
To check that the $v_{m}$ are eigenvectors, we use a recursive argument. Assume that $m\ge 2$ and for $1\le k\le m-1$
\[
\PO v_{k}=k\,(\tmuz-\tlamz)\,v_{k}.
\]
We can write 
\[
\PO v_{m}=m\,(\tmuz-\tlamz)\,v_{m}+r_{m}
\]
with
\[
r_{m}=\sqrt{n}\left(\frac{\tlamz}{\tmuz}\right)^{\frac{n}{2}}R_{m}
\]
where $R_{m}$ is a polynomial in $n$ of degree at most $m-2$. 
Therefore
\[
r_{m}\in\text{Span}\big\{v_{1},\ldots, v_{m-1}\big\}\;.
\]
From our recursive assumption, the symmetry of $\PO$, and the orthogonality of the $v_{k}$ (following from the orthogonality of the $P_{k}$), we get that  for any $1\le k\le m-1$
\[
0=\langle v_{k}, \PO v_{m}\rangle_{\ldeux}=\langle v_{k},r_{m}\rangle_{\ldeux}.
\]
Therefore $r_{m}=0$. Hence $\PO v_{m}=m\,(\tmuz-\tlamz)\,v_{m}$, and we can proceed with the recursion.

We now prove that the $v_{m}$ form a basis of $\ldeux$.
Assume the contrary, namely there exists $u\in\ldeux$ of norm one such that for any $m$
\[
\sum_{n=1}^{\infty}\overline u(n)\,v_{m}(n)=0.
\]
We observe that the sequence
\[
w(n)=\frac{1}{\sqrt{n}}\left(\frac{\tlamz}{\tmuz}\right)^{\frac{n}{2}}u(n)
\]
belongs to $\ell^{2}(q)$, whith $q$ defined in \eqref{q}. Therefore our assumption on $u$ implies that $w$ is orthogonal to all the polynomials in $\ell^{2}(q)$.  Let us 
show that the set of polynomials is dense  in $\ell^{2}(q)$. It is sufficient to prove that the measure $q$ is the solution of a determinate moment problem,
see \cite[Corollary 2.50, p. 30]{Deift}. Following \cite[Proposition 1.5, p. 88]{Simon}, it is enough to prove that the moments of order $m$, denoted by
$\gamma_{m}$ of $q$, satisfy the following property: there exists $C>0$ such that, for any $m\in\integers$,
\[
\gamma_{m}=\sum_{n=1}^{\infty}n^{m+1}  \left(\frac{\tmuz}{\tlamz}\right)^{n}\le C^{m}\,m! \,.
\]
The proof is left to the reader. Therefore the set of all polynomials is dense in $\ell^{2}(q)$ implying $w=0$ and we get  a contradiction with the existence of a $u$ nonzero orthogonal 
to all the $v_{m}$ in $\ldeux$. Therefore,  the $v_{m}$ form a basis of $\ldeux$.

\smallskip We now observe that $\PO$ is bounded above. The proof is similar to that of Proposition \ref{diri} and left to the reader.
Since the $v_m$'s form a basis of $\ldeux$, for any $B>0$ we have $\text{ker}(\PO^{\dagger}-B)=\{0\}$.
Hence $\PO$ is self adjoint (see for instance \cite[Prop. 3.9, p. 43]{KS}) and the spectrum is given by 
\[
\spectre(\PO)=(\tmuz-\tlamz)\integers.
\]
This ends the proof.
\end{proof}

%%%%%% subSECTION
\section{Local maximum principle and consequences thereof}

We will state and prove a maximum/minimum principle in a form which is well suited for our purposes.
We start with a proposition giving elementary inequalities following from the order on the real line.

\begin{proposition}\label{locmax}
Assume $a>0$, $c>0$ and $b>a+c$. Let $u,w\in\real$.\newline
If $v>0$ is such that $a\, u+c\,w-b\,v\ge0$, then $v< \max\{u,w\}$.\newline
If $v<0$ is such that $a\, u+c\,w-b\,v\le0$, then $v> \min\{u,w\}$.\newline
Moreover, if $u\ge v\ge w$ are such that $a\, u+c\,w-b\,v\ge0$, then $v\le \frac{a}{b-c}\,u$.
\end{proposition}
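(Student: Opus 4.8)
The plan is to treat the three assertions separately, each reducing to a one-line manipulation of the hypothesis inequality together with the strict inequality $b>a+c$. None of the three requires more than elementary order arguments on $\real$, so I do not anticipate a genuine obstacle; the only point requiring a little care is that the bound $au+cw\le(a+c)\max\{u,w\}$ must be combined with the sign of $v$ to guarantee that $\max\{u,w\}>0$ before one can divide and exploit $(a+c)/b<1$.

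For the first assertion, I would start from $bv\le au+cw$. Since $a>0$ and $u\le\max\{u,w\}$ we have $au\le a\max\{u,w\}$, and likewise $cw\le c\max\{u,w\}$; hence $bv\le(a+c)\max\{u,w\}$. Because $v>0$ and $b>0$, the left-hand side is strictly positive, forcing $\max\{u,w\}>0$. Dividing by $b$ and using $(a+c)/b<1$ (which follows from $b>a+c$) gives $v\le\frac{a+c}{b}\max\{u,w\}<\max\{u,w\}$, as claimed.

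The second assertion I would obtain from the first by the substitution $(u,w,v)\mapsto(-u,-w,-v)$: the hypotheses become $a(-u)+c(-w)-b(-v)\ge0$ with $-v>0$, so the first assertion yields $-v<\max\{-u,-w\}=-\min\{u,w\}$, i.e.\ $v>\min\{u,w\}$.

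For the third assertion, I would use $w\le v$ directly in the hypothesis: since $c>0$, $cw\le cv$, so $0\le au+cw-bv\le au+cv-bv=au-(b-c)v$. Noting that $b-c>a>0$ (because $b>a+c$ and $a>0$), I can divide to conclude $v\le\frac{a}{b-c}u$. This completes the proof.

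$\;\blacksquare$
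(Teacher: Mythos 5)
Your proof is correct and, apart from phrasing the first assertion as a direct estimate (bounding $au+cw$ by $(a+c)\max\{u,w\}$ and dividing) rather than the paper's one-line contradiction, and deriving the second assertion by the sign-flip $(u,w,v)\mapsto(-u,-w,-v)$ instead of "the case $v<0$ is similar", it is essentially the same elementary argument as in the paper. The third assertion is handled identically in both.
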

\begin{proof}
If $v>0$ we have $0< (b-a-c)\,v\le a\,(u-v)+c\,(w-v)$ leads to a contradiction if $v\ge\max\{u,w\}$. The case $v<0$ is similar.
The last statement is trivial since $b\,v\le a\,u+c\,w\le a\,u+c\,v$.
\end{proof}

\begin{proposition}\label{principe}
Let $1<n_1<n_2$ be integers such $n_2> n_1+1$. Let $(\alpha_n)$ be a finite sequence of strictly positive real numbers defined for
$n_1-1,\ldots,n_2$.  Let $(\beta_n)$ be a finite sequence of strictly positive real numbers defined for $n_1,\ldots,n_2$. 
Let $(u_n)$ be a finite sequence of real numbers defined for $n_1-1,\ldots,n_2+1$.  Assume that, for all $n_1\leq n\leq n_2$, we
have $\beta_n> \alpha_n+\alpha_{n-1}$.
\newline If $\alpha_n u_{n+1}+\alpha_{n-1}u_{n-1}-\beta_n u_n\geq 0$, then the sequence $(u_n)$ has no positive local maxima for
$n \in\{n_1+1,\ldots,n_2-1\}$.  Moreover, if there exists some $u_n>0$ then the maximum is  attained only at the boundary, that
is, on the set $\{n_1,n_2\}$.
\newline If $\alpha_n u_{n+1}+\alpha_{n-1}u_{n-1}-\beta_n u_n\leq 0$, then the sequence $(u_n)$ has no positive local minima for
$n \in\{n_1+1,\ldots,n_2-1\}$, and if there exists some $u_n<0$ then the minimum is  attained only at the boundary, that is, on the
set $\{n_1,n_2\}$.
\end{proposition}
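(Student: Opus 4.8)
The plan is to reduce everything to Proposition~\ref{locmax}, applied at a single putative interior extremum. The statements about maxima and about minima are mirror images of each other, so I would give the argument for maxima and then obtain the minimum case either by replacing $(u_n)$ with $(-u_n)$ or, equivalently, by invoking the second assertion of Proposition~\ref{locmax} in place of the first. Note also that the hypothesis $n_2>n_1+1$ only serves to make the interior set $\{n_1+1,\ldots,n_2-1\}$ nonempty; otherwise the statement is vacuous.

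First I would rule out positive interior local maxima. Suppose, for contradiction, that $u_{n_0}>0$ and $u_{n_0-1}\le u_{n_0}\ge u_{n_0+1}$ for some $n_0\in\{n_1+1,\ldots,n_2-1\}$. Since $n_1\le n_0\le n_2$, the hypothesis gives $\alpha_{n_0}u_{n_0+1}+\alpha_{n_0-1}u_{n_0-1}-\beta_{n_0}u_{n_0}\ge 0$, and the coefficients satisfy $\alpha_{n_0}>0$, $\alpha_{n_0-1}>0$, $\beta_{n_0}>\alpha_{n_0}+\alpha_{n_0-1}$. Applying the first assertion of Proposition~\ref{locmax} with $a=\alpha_{n_0}$, $c=\alpha_{n_0-1}$, $b=\beta_{n_0}$, $v=u_{n_0}$, $u=u_{n_0+1}$, $w=u_{n_0-1}$ yields $u_{n_0}<\max\{u_{n_0+1},u_{n_0-1}\}$, contradicting $u_{n_0}\ge u_{n_0\pm1}$. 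The assertion about where the maximum is attained is then immediate: if $M=\max_{n_1\le n\le n_2}u_n>0$ and $M$ were attained at some $n_0\in\{n_1+1,\ldots,n_2-1\}$, then $u_{n_0\pm1}$ are again among $u_{n_1},\ldots,u_{n_2}$, so $u_{n_0}=M$ would be a positive interior local maximum, which we just excluded; hence the maximum can only be attained at $n_1$ or $n_2$.

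The minimum statements follow symmetrically: a negative interior local minimum, $u_{n_0}<0$ with $u_{n_0-1}\ge u_{n_0}\le u_{n_0+1}$, together with $\alpha_{n_0}u_{n_0+1}+\alpha_{n_0-1}u_{n_0-1}-\beta_{n_0}u_{n_0}\le 0$, contradicts the second assertion of Proposition~\ref{locmax}, which forces $u_{n_0}>\min\{u_{n_0+1},u_{n_0-1}\}$; and the location of the minimum is then deduced exactly as for the maximum. I do not anticipate any real obstacle here: the only things to keep track of are that the inequality $b>a+c$ required by Proposition~\ref{locmax} is precisely the standing hypothesis $\beta_n>\alpha_n+\alpha_{n-1}$ evaluated at $n=n_0$, that the positivity (resp.\ negativity) of $u_{n_0}$ is genuinely used, and that for the extreme interior indices $n_0=n_1+1$ and $n_0=n_2-1$ the neighbouring values $u_{n_1}$ and $u_{n_2}$ both lie in the range on which $(u_n)$ is defined and are to be regarded as boundary values.
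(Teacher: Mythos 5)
Your proposal is correct and follows essentially the same route as the paper, whose proof consists precisely of invoking Proposition~\ref{locmax} at a putative interior extremum; you have merely spelled out the contradiction (including the boundary-location claim and the symmetric minimum case) that the paper leaves implicit.
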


\begin{proof}
It follows from Proposition \ref{locmax}.
\end{proof}

\begin{proposition}\label{pouet}
Let $1<n_1<n_2$ be integers such $n_2> n_1+1$.  Let $(\alpha_n)$ be a finite sequence of strictly positive real numbers defined for 
$n_1-1,\ldots,n_2$.  Let $(\beta_n)$ be a finite sequence of strictly positive real numbers defined for $n_1,\ldots,n_2$. 
Let $(u_n)$ be a finite sequence of real numbers defined for $n_1-1,\ldots,n_2+1$.  Assume that, for all $n_1\leq n\leq n_2$, we
have $\beta_n> \alpha_n+\alpha_{n-1}$. \newline 
If $\alpha_n u_{n+1}+\alpha_{n-1}u_{n-1}-\beta_n u_n\geq 0$, $u_{n_1+1}>0$ and $u_{n_1+1}\geq u_{n_1}$, then the sequence $(u_n)$
is increasing.
\newline If $\alpha_n u_{n+1}+\alpha_{n-1}u_{n-1}-\beta_n u_n\leq 0$, $u_{n_1+1}<0$ and $u_{n_1+1}\leq u_{n_1}$, then the sequence $(u_n)$
is decreasing.\newline Finally, if $(u_n)$ is a positive sequence then there cannot be two local (positive) minima separated by a distance larger than one.
\end{proposition}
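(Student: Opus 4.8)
The plan is to derive all three assertions of Proposition \ref{pouet} from one elementary rearrangement. Put $d_n := u_{n+1} - u_n$ and $\gamma_n := \beta_n - \alpha_n - \alpha_{n-1} > 0$ (positive by hypothesis). For $n_1 \le n \le n_2$, the inequality $\alpha_n u_{n+1} + \alpha_{n-1} u_{n-1} - \beta_n u_n \ge 0$ is equivalent to
\[
\alpha_n d_n - \alpha_{n-1} d_{n-1} \ge \gamma_n u_n .
\]
Hence at any such $n$ with $u_n > 0$ one has $\alpha_n d_n > \alpha_{n-1} d_{n-1}$, and since $\alpha_n > 0$ this shows that $d_{n-1} \ge 0$ forces $d_n > 0$. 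This implication — equivalently, the first statement of Proposition \ref{locmax} applied with $v = u_n$, $u = u_{n+1}$, $w = u_{n-1}$ — is the engine of everything below; the opposite inequality gives the mirror implication ($d_{n-1} \le 0$ forces $d_n < 0$ at indices with $u_n < 0$).

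For the first assertion, $u_{n_1+1} \ge u_{n_1}$ means $d_{n_1} \ge 0$, while $u_{n_1+1} > 0$. One then argues by induction on $n = n_1+1, \dots, n_2$, proving simultaneously that $u_n > 0$ and $d_n > 0$: from $u_n > 0$ and $d_{n-1} \ge 0$ the implication gives $d_n > 0$, and then $u_{n+1} = u_n + d_n > u_n > 0$, which carries the induction. This yields $u_{n_1} \le u_{n_1+1} < u_{n_1+2} < \cdots < u_{n_2+1}$, that is, $(u_n)$ is increasing. The second assertion is immediate by applying the first to $(-u_n)$, which satisfies the reversed inequality together with $-u_{n_1+1} > 0$ and $-u_{n_1+1} \ge -u_{n_1}$; hence $(-u_n)$ is increasing and $(u_n)$ decreasing.

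For the last assertion I would work under the hypothesis $\alpha_n u_{n+1} + \alpha_{n-1} u_{n-1} - \beta_n u_n \ge 0$; this orientation is essential, since under the reversed inequality the statement is false (already the positive sequence $(5,1,5,1,5)$, with a suitable choice of $\alpha$ and $\beta$, has two local minima at distance two). Suppose $(u_n)$ is positive with local minima at $m < m'$, $m' \ge m+2$. At the local minimum $m$ we have $d_m = u_{m+1} - u_m \ge 0$; since the whole sequence is positive, applying the implication above successively at $n = m+1, \dots, m'-1$ (all lying in $\{n_1,\dots,n_2\}$) gives $d_n > 0$ there, in particular $d_{m'-1} > 0$, that is, $u_{m'} > u_{m'-1}$ — contradicting that $m'$ is a local minimum. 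Alternatively one can invoke Proposition \ref{principe}: the maximum of $(u_n)$ over $\{m,\dots,m'\}$ would be attained at an interior index $\ell \in \{m+1,\dots,m'-1\} \subseteq \{n_1+1,\dots,n_2-1\}$, producing a positive local maximum, which that proposition excludes.

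There is no real obstacle; the result is elementary once the rearrangement is written down. The only care needed is in the index bookkeeping — every index at which one invokes the rearranged inequality must lie in $\{n_1,\dots,n_2\}$, which is guaranteed because a local minimum at $m$ forces $m \in \{n_1,\dots,n_2\}$ — and in selecting the correct sign of the hypothesis for the third assertion.
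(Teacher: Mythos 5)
Your proof is correct and takes essentially the paper's route: the rearranged inequality $\alpha_n d_n-\alpha_{n-1}d_{n-1}\ge \gamma_n u_n$ is precisely the recursive application of Proposition \ref{locmax} that constitutes the paper's one-line proof, and your induction and two-local-minima argument spell out that recursion. Your observation that the final assertion should be read under the hypothesis $\alpha_n u_{n+1}+\alpha_{n-1}u_{n-1}-\beta_n u_n\ge 0$ (automatic in the paper's applications, where the relation is an equality) is a sensible clarification rather than a deviation.
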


\begin{proof}
It follows recursively from Proposition \ref{locmax}.
\end{proof}

\appendix 

%%%%%% SECTION
\section{Quasi-eigenvalues and quasi-eigenvectors of self-adjoint operators}%\label{aaestim}

\begin{proposition}\label{yaspectre}
Let $A$ be a self-adjoint operator in a Hilbert space $\mathscr{H}$ with domain $\Dom(A)$. Assume there exists  $u\in \Dom(A)$ of norm $1$, $\omega\in\real$ and $\epsilon>0$ such that
\[
\big\|A\,u-\omega\,u\big\|\le\epsilon.
\]
Then 
\[
\spectre(A)\,\cap\left[\,\omega-\epsilon,\omega+\epsilon\,\right]\neq\emptyset\;.
\]
\end{proposition}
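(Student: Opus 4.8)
The plan is to argue by contradiction, using the spectral theorem for the self-adjoint operator $A$. Suppose that $\spectre(A)\cap[\omega-\epsilon,\omega+\epsilon]=\emptyset$. Since $\spectre(A)$ is a closed subset of $\real$, the distance $\delta:=\mathrm{dist}\big(\omega,\spectre(A)\big)$ is then strictly larger than $\epsilon$.

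Next I would bring in the projection-valued measure $E$ attached to $A$ by the spectral theorem, and consider the scalar spectral measure $\mu_u(B):=\langle u,E(B)u\rangle$ defined on Borel sets $B\subset\real$. Since $\|u\|=1$, $\mu_u$ is a probability measure, and it is carried by $\spectre(A)$. Because $u\in\Dom(A)$, one has $\int_{\real}\lambda^2\,\dd\mu_u(\lambda)<+\infty$, so the functional calculus yields
\[
\big\|A\,u-\omega\,u\big\|^{2}=\int_{\spectre(A)}\big|\lambda-\omega\big|^{2}\,\dd\mu_u(\lambda)\;\geq\;\delta^{2}\,\mu_u(\real)=\delta^{2}>\epsilon^{2},
\]
where we used that $|\lambda-\omega|\geq\delta$ for $\mu_u$-almost every $\lambda$. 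This contradicts the hypothesis $\|A\,u-\omega\,u\|\leq\epsilon$, so $\spectre(A)\cap[\omega-\epsilon,\omega+\epsilon]$ must be non-empty.

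This is essentially a one-line consequence of the spectral theorem, so I do not expect any genuine obstacle; the only point deserving a word of care is that the hypothesis $u\in\Dom(A)$ is precisely what makes the spectral integral finite and the displayed identity legitimate. Equivalently, one could avoid spectral measures altogether: if $\omega$ lies at distance $>\epsilon$ from $\spectre(A)$, then the resolvent $(A-\omega)^{-1}$ exists and is bounded with $\big\|(A-\omega)^{-1}\big\|=1/\mathrm{dist}(\omega,\spectre(A))<1/\epsilon$, whence $1=\|u\|\leq\big\|(A-\omega)^{-1}\big\|\,\big\|(A-\omega)\,u\big\|<\epsilon^{-1}\epsilon=1$, a contradiction.
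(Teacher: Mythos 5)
Your proposal is correct, and its closing remark (the resolvent bound $\|(A-\omega)^{-1}\|=1/\mathrm{dist}(\omega,\spectre(A))$ applied to $u=(A-\omega)^{-1}(A-\omega)u$) is precisely the argument the paper gives. Your primary computation via the scalar spectral measure $\mu_u$ is only a cosmetic variant of the same spectral-theorem fact, so there is nothing further to add.
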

\begin{proof}
We will assume that $\omega\notin \spectre(A)$, otherwise the result is trivial. The proof is then by contradiction.
If $R_{\omega}$ denotes the resolvent of $A$ at $\omega$, we have 
\[
u=-R_{\omega}\,(A\,u-\omega\,u)
\]
hence
\[
1\le \epsilon\,\big\|R_{\omega}\big\|.
\]
The result follows from the estimate (a  direct consequence of the
spectral decomposition) 
\[
\big\|R_{\omega}\big\|\le \frac{1}{\text{d}\big(\omega,\spectre(A)\big)}
\]
where $d$ denotes the Euclidean distance on the real line. 
If $\spectre(A)\,\cap\,[\omega-\epsilon,\omega+\epsilon]=\emptyset$, since $\spectre(A)$ is closed, then
\[
\delta=\text{d}\big(\omega,\spectre(A)\big)>\epsilon
\]
and we get
\[
1\le\frac{\epsilon}{\delta}<1
\]
which is a contradiction.
\end{proof}

\begin{proposition}\label{approxvp}
Let $A$ be a self-adjoint operator in a Hilbert space $\mathscr{H}$ with domain $\Dom(A)$. Assume there exists  $u\in \Dom(A)$ of norm $1$,  $\omega\in\real$ and $\epsilon>0$ such that
\[
\big\|A\,u-\omega\,u\big\|\le\epsilon\;.
\]
Assume $A$ has discrete spectrum with eigenvalues of multiplicity one, and let $\delta>0$ denote the minimum distance between two
consecutive eigenvalues. Then if $\epsilon<\delta$ there is a $\lambda\in\spectre(A)$ with a normalized eigenvector $e$ such that $|\omega-\lambda|\le\epsilon$ and  
\[
\big\|u-e\big\|\le \frac{2\,\epsilon}{\delta-\epsilon}. 
\]
\end{proposition}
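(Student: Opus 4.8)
The plan is to diagonalise $A$ and track how the mass of $u$ is distributed among the eigenvectors. Since $A$ is self-adjoint with discrete, simple spectrum, fix an orthonormal basis $(e_k)_{k\ge 0}$ of eigenvectors, $A e_k=\lambda_k e_k$, and expand $u=\sum_k c_k e_k$ with $c_k=\langle e_k,u\rangle$ and $\sum_k|c_k|^2=1$. Then $A u-\omega u=\sum_k(\lambda_k-\omega)c_k e_k$, so the hypothesis $\|Au-\omega u\|\le\epsilon$ becomes $\sum_k|\lambda_k-\omega|^2|c_k|^2\le\epsilon^2$.

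Next I would single out the relevant eigenvalue. Because $\spectre(A)$ is discrete and $|\lambda_k-\omega|\to\infty$, the quantity $|\lambda_k-\omega|$ attains its minimum at some index $k_0$ (ties broken arbitrarily); write $\lambda=\lambda_{k_0}$ and $e=e_{k_0}$. By Proposition \ref{yaspectre} the interval $[\,\omega-\epsilon,\omega+\epsilon\,]$ meets $\spectre(A)$, hence the closest eigenvalue satisfies $|\lambda-\omega|\le\epsilon$, which is the first assertion. For every $k\neq k_0$ the gap condition gives $|\lambda_k-\lambda|\ge\delta$, so by the triangle inequality $|\lambda_k-\omega|\ge|\lambda_k-\lambda|-|\lambda-\omega|\ge\delta-\epsilon>0$ (here $\epsilon<\delta$ is used).

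Plugging this lower bound into the inequality of the first step, $\epsilon^2\ge\sum_{k\neq k_0}|\lambda_k-\omega|^2|c_k|^2\ge(\delta-\epsilon)^2\sum_{k\neq k_0}|c_k|^2$, so $\sum_{k\neq k_0}|c_k|^2\le\epsilon^2/(\delta-\epsilon)^2$ and therefore $|c_{k_0}|^2=1-\sum_{k\neq k_0}|c_k|^2\ge 1-\epsilon^2/(\delta-\epsilon)^2$. Replacing $e$ by a suitable unimodular multiple, I may assume $c_{k_0}=\langle e,u\rangle$ is real with $0\le c_{k_0}\le 1$. Then $\|u-e\|^2=(1-c_{k_0})^2+\sum_{k\neq k_0}|c_k|^2$, and since $(1-c_{k_0})^2\le(1-c_{k_0})(1+c_{k_0})=1-c_{k_0}^2=\sum_{k\neq k_0}|c_k|^2$, one gets $\|u-e\|^2\le 2\sum_{k\neq k_0}|c_k|^2\le 2\epsilon^2/(\delta-\epsilon)^2$, hence $\|u-e\|\le\sqrt2\,\epsilon/(\delta-\epsilon)\le 2\epsilon/(\delta-\epsilon)$.

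There is no genuine obstacle in this argument; the only points deserving a word of care are that the minimum defining $k_0$ is actually attained (guaranteed by discreteness of the spectrum, which also makes $\delta>0$ meaningful), the arbitrary tie-breaking when $\omega$ happens to be equidistant from two eigenvalues, and the harmless phase adjustment of $e$. Using Proposition \ref{yaspectre} directly, rather than re-deriving that some eigenvalue lies within $\epsilon$ of $\omega$, keeps the proof of the bound $|\omega-\lambda|\le\epsilon$ essentially free.
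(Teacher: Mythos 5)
Your proof is correct and follows essentially the same route as the paper: you isolate the component of $u$ along the eigenvector of the eigenvalue nearest $\omega$ and bound the orthogonal remainder by $\epsilon/(\delta-\epsilon)$ using the spectral gap, which is exactly what the paper does with the spectral projector $P_\lambda$, only written in an explicit eigenbasis. The sole difference is your final algebraic step, which in fact yields the slightly sharper bound $\sqrt{2}\,\epsilon/(\delta-\epsilon)$ before you relax it to the stated $2\,\epsilon/(\delta-\epsilon)$.
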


\begin{proof}
We will denote by $P_{z}$ the one-dimensional spectral projector of $A$ corresponding to $z\in\spectre(A)$.

Since $\epsilon<\delta$, using Proposition \ref{yaspectre} we conclude that there is only one eigenvalue of $A$ in $\left[\,\omega-\epsilon,\omega+\epsilon\,\right]$
and we denote by $\lambda$ this eigenvalue and by $\tilde e$ one of the corresponding eigenvectors  (they all differ only by a phase factor). 
Let 
\[
v=\omega\,u-A\,u.
\]
Since $u=P_\lambda u + (\mathrm{Id}-P_\lambda)u$, we get from the spectral decomposition 
\[
A(\mathrm{Id}-P_\lambda)\,u-\omega (\mathrm{Id}-P_\lambda)\,u=(\mathrm{Id}-P_\lambda)\,v.
\]
This implies that
\[
\| (\mathrm{Id}-P_\lambda)\,u\|\leq \frac{\epsilon}{\delta-\epsilon}.
\]
Since $P_{\lambda}\,u$ is proportional to $\tilde e$, we can write
\[
u=\beta\,\tilde e+(\mathrm{Id}-P_\lambda)\,u
\]
with $\beta\in\complex$. Since $u$ and $\tilde e$ are of norm one, and $\tilde e$ and $(\mathrm{Id}-P_\lambda)\,u$ are orthogonal, we get
\[
1=|\beta|^{2}+\big\|(\mathrm{Id}-P_\lambda)\,u\big\|^{2}
\]
which implies
\[
1\ge |\beta|\ge 1-\frac{\epsilon}{\delta-\epsilon}\;.
\]
Let $\beta=|\beta|\exp(\ic\theta)$, we define $e=\e^{\ic\,\theta}\,\tilde e$, and the result follows.
\end{proof}

%%%%%%%%%%%%%%%%%%%%% BIBLIO %%%%%%%%%%%%%%%%%%%%%

\end{document}